\documentclass[12pt]{amsart}

\usepackage{graphicx}
\usepackage{amscd}
\usepackage[top=1in,bottom=1in,left=1.15in,right=1.15in]{geometry}
\usepackage{amsmath,amssymb,amsfonts}
\usepackage{amsthm}
\usepackage{color}
\usepackage{hyperref}
\usepackage{tikz-cd}
\usepackage{epstopdf}
\usetikzlibrary{matrix}
\usepackage{verbatim}
\usepackage{array}
\usepackage{mathtools}
\usepackage{enumitem}

\usepackage{amsmath,amssymb,amsfonts}
\usepackage{amsthm}
\usepackage{color}
\usepackage{hyperref}
\usepackage{tikz-cd}
\usepackage{epstopdf}
\usetikzlibrary{matrix}
\usepackage{verbatim}
\usepackage{array}
\usepackage{mathtools}
\usepackage{enumitem}

\newtheorem{theorem}{Theorem}[section]

\newtheorem{lemma}[theorem]{Lemma}
\newtheorem{proposition}[theorem]{Proposition}
\newtheorem{conjecture}[theorem]{Conjecture}

\theoremstyle{definition}
\newtheorem{definition}[theorem]{Definition}
\newtheorem{example}[theorem]{Example}

\newtheorem{corollary}[theorem]{Corollary}
\theoremstyle{remark}
\newtheorem{remark}[theorem]{Remark}

\allowdisplaybreaks

\def\diaCircle{\unitlength.1em
  \begin{minipage}{15\unitlength}
    \begin{picture}(15,15)
      \put(7.5,10){\circle{8}}
      \put(7.5,0){\vector(0,1){5}}
       \put(7.5,5){\line(0,1){5.5}}
    \end{picture}
  \end{minipage}
}

\usetikzlibrary{decorations.markings}

\tikzset{->-/.style={decoration={
  markings,
  mark=at position .5 with {\arrow{>}}},postaction={decorate}}}

\tikzset{-<-/.style={decoration={
  markings,
  mark=at position .5 with {\arrow{<}}},postaction={decorate}}}

\begin{document}
\title{Clock moves and Alexander polynomial of plane graphs}
\author{Wenbo Liao \and Zhongtao Wu}

\address{Department of Mathematics, The Chinese University of Hong Kong, Shatin, Hong Kong
}
\email{wbliao@math.cuhk.edu.hk \\ ztwu@math.cuhk.edu.hk }

\begin{abstract}
In this paper, we introduce a notion of clock moves for spanning trees in plane graphs. This enables us to develop a spanning tree model of an Alexander polynomial for a plane graph and prove the unimodal property of its associate coefficient sequence.  In particular, this confirms the trapezoidal conjecture for planar singular knots and gives new insights to Fox's original conjecture on alternating knots.   

\end{abstract}

\maketitle

\section{Introduction}

In \cite{BW}, Bao and the second author defined an Alexander polynomial $\Delta_{G}(t)$ for a special class of graphs, the \textit{MOY graphs}, which are directed spatial graphs with \textit{transverse orientation} and \textit{balanced coloring}.  The Alexander polynomial $\Delta_{G}(t)$ contains a wealth of topological as well as graph-theoretical information about the graph $G$. This paper further examines this polynomial invariant, primarily through the use of combinatorial methods.

In the case where $G$ is a plane MOY graph, the Alexander polynomial $\Delta_{G}(t)$ exhibits a number of particularly remarkable properties. For example, it is shown in \cite[Theorem 5.3]{BW} that $\Delta_G(t) \in \mathbb{Z}_{\geq0}[t]$, i.e., all non-zero coefficients of the Alexander polynomials are positive.  However, this result did not exclude the possibility of a zero coefficient in some intermediate-degree terms. The first result of this paper will show that such gaps can be excluded, thus establishing the  following ``strict positive'' property for plane graphs.

\begin{theorem} \label{Thm:strictpositivity}
Suppose $G$ is a plane graph and $\Delta_G(t)\doteq \sum_{i=1}^n a_it^i$ with $a_1=a_n\neq 0$.  Then all the coefficients $a_i$'s are positive.  Here and henceforth, the notation $\doteq$ denotes an equality up to a power of $t$. \end{theorem}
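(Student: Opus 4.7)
The plan is to deduce Theorem~\ref{Thm:strictpositivity} from a spanning tree expansion of $\Delta_G(t)$ together with a clock move argument in the spirit of Kauffman's formal knot theory. As announced in the abstract, the first step is to develop a combinatorial formula
\[
\Delta_G(t) \doteq \sum_{T} t^{d(T)},
\]
where the sum ranges over the spanning trees $T$ of an auxiliary plane graph built from $G$ and $d(T)$ is an integer-valued weight. This reproves the non-negativity result of \cite{BW} and reduces Theorem~\ref{Thm:strictpositivity} to the purely combinatorial statement that $\{d(T) : T \text{ a spanning tree}\}$ is an \emph{interval} of integers, since the hypothesis $a_1 = a_n \neq 0$ fixes the range, after the normalization $\doteq$, to be $\{1,2,\dots,n\}$.

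To prove intervality I would introduce a local modification of spanning trees, the \emph{clock move}, which swaps one edge of $T$ for an adjacent edge across a face of the plane embedding and changes $d(T)$ by exactly $\pm 1$. The two properties I would need to establish are: (i) every non-extremal spanning tree admits a clock move, and (ii) the graph whose vertices are spanning trees and whose edges are clock moves is connected. Granting (i) and (ii), starting from any $T$ and repeatedly applying clock moves that decrease $d$, one produces a path reaching the unique tree of minimal degree, and along such a path $d$ assumes every integer value between its minimum and maximum; hence every coefficient $a_i$ in the asserted range is strictly positive.

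The main obstacle I anticipate is~(ii), the analogue of Kauffman's clock theorem for plane MOY graphs. Kauffman equipped his states with a distributive lattice structure whose cover relations are exactly the clock moves, and the essential new combinatorial input needed here is the correct lattice structure on spanning trees that is compatible with the transverse orientation, the balanced coloring, and the statistic $d$. Identifying the right partial order -- in particular, defining the meet and join of two spanning trees and verifying monotonicity of $d$ along cover relations -- is where the bulk of the work will lie; once this lattice is in place, properties (i) and (ii) follow from standard Hasse diagram arguments and the theorem is immediate.
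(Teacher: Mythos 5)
Your overall skeleton does match the paper's: reduce to trivial coloring, expand $\Delta_G(t)$ as a sum of monomials indexed by rooted spanning trees, observe that a clock move changes the exponent by exactly $\pm 1$, prove connectivity of the clock-move graph, and conclude that the exponents form an interval of integers so no intermediate coefficient can vanish. The problem is that, as written, you assume precisely the two results that carry all the content. First, the expansion $\Delta_G(t)\doteq\sum_T t^{d(T)}$ together with the $\pm 1$ property is Theorem \ref{Thm:spanningtreemodel}, and its proof is not formal: clock moves split into \emph{local} and \emph{global} ones, and while for a local move the corresponding Kauffman states differ only at two crossings (your ``swap across a face'' picture), a global move changes the corner assignment at every crossing meeting a dual cycle $C^*$, and the fact that the degree of the state monomial still shifts by exactly one is a discrete divergence-type argument relying on the balanced orientation (Lemma \ref{Lemma:degreedifference}). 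Your description of clock moves only captures the local case, so the $\pm 1$ claim --- the heart of the model --- is unsupported in the proposal.

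Second, connectivity. You propose to obtain the clock theorem from a distributive lattice structure \`a la Kauffman, but no such lattice is constructed, and constructing one is at least as hard as the connectivity statement itself, so ``standard Hasse diagram arguments'' beg the question. The paper never builds a lattice: it encodes trees as lattice points $\vec{x}$, shows that between two comparable tree points every intermediate point is again a spanning tree (Proposition \ref{Prop:partial}, which uses planarity and the choice of root on the unbounded face), and then runs an exchange induction on the number of edges in which two trees differ (Lemmas \ref{lemma:2edgediff} and \ref{lemma:m-edgediff}) to prove Theorem \ref{Thm:Clock}. In addition, your intermediate-value deduction leans on two further unproved claims --- that every non-minimal tree admits a degree-decreasing clock move, and that the tree of minimal degree is unique --- neither of which is established in the paper, and neither of which is needed: connectivity plus the $\pm 1$ property already force the set of exponents to be an interval by a discrete intermediate-value argument. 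So the plan points in the right direction, but the spanning tree model with its degree lemma, the clock theorem, and the auxiliary uniqueness/local-move claims are all left unestablished or replaced by an unsubstantiated lattice hypothesis.
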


We note that the above theorem can be viewed as a graph analogue of the classical result in knot theory, which states that the Alexander polynomial of \textit{alternating knots} have non-zero coefficients of alternating signs \cite{MR0099666}. Indeed, additional restrictions on the Alexander polynomial of alternating knots have been conjectured by Fox \cite{Fox}, among which there is the famous \textit{Trapezoidal Conjecture} claiming that the coefficients of the Alexander polynomial of alternating knots exhibit a trapezoidal pattern. See \cite{AJK, HMV, Sto} for recent progress and generalizations of the conjecture. In light of the above Fox's conjecture, we proceed to verify the analogous trapezoidal conjecture for our Alexander polynomial of plane graphs.  

\begin{definition}
    Let $(a_1,a_2,\dots,a_n)$ be a sequence of non-negative integers such that $a_i=a_{n+1-i}$ for all $1\leq i\leq n$, and let $m=\lfloor \frac{n}{2}\rfloor$. We say that the sequence satisfies the \textit{trapezoidal} property if the following two conditions are met:
    \begin{enumerate}
        \item $a_1\leq a_2\cdots \leq a_m\geq\cdots\geq a_{n-1}\geq a_n$.

        \item If $a_i=a_{i+1}$ for some $1\leq i\leq m$, then $a_i=a_{i+1}=\cdots =a_m.$
    \end{enumerate}

   \noindent A sequence that satisfies only Property (1) is said to be \textit{unimodal}. 
\end{definition}

\begin{theorem}\label{Thm:Trapezoidal}
Suppose $G$ is a plane MOY graph and $\Delta_G(t)\doteq \sum_{i=1}^n a_i t^i$.  Then the sequence $(a_1,\dots,a_n)$ is unimodal.  
    
\end{theorem}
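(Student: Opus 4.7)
The plan is to follow the roadmap suggested in the abstract: build a spanning tree model for $\Delta_G(t)$, relate spanning trees by \emph{clock moves}, and then exploit the combinatorics of these moves to deduce unimodality. First I would develop a state-sum expansion
\[
\Delta_G(t)\;\doteq\;\sum_{T\in\mathcal{T}(G)} t^{\ell(T)},
\]
where $\mathcal{T}(G)$ is the set of spanning trees of the plane graph underlying $G$ and $\ell$ is a weight function read off from the MOY coloring together with a checkerboard coloring of the complementary regions of $G$. Once this is set up, the coefficient $a_i$ is literally the number of spanning trees of weight $i$, so Theorem~\ref{Thm:Trapezoidal} becomes a purely combinatorial statement about level sets of $\ell$ on $\mathcal{T}(G)$.

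Next I would define clock moves on $\mathcal{T}(G)$ in analogy with Kauffman's clock moves on Kauffman states: a clock move trades an edge of $T$ for a unique edge of $G\setminus T$ in a locally specified way so that the resulting spanning tree $T'$ satisfies $\ell(T')=\ell(T)+1$. This covering relation makes $\mathcal{T}(G)$ into a graded poset whose rank function agrees with $\ell$ (up to a shift). The combinatorial model already implies the Theorem~\ref{Thm:strictpositivity}-style non-vanishing statement in a transparent way, and it is the natural arena for the required comparisons $a_i\leq a_{i+1}$.

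The central structural input is a clock theorem in this setting: $(\mathcal{T}(G),\prec)$ is a connected (in fact, distributive) lattice with a unique minimum $T_{\min}$ of weight $1$ and a unique maximum $T_{\max}$ of weight $n$. The planar duality involution $T\mapsto T^{\vee}$ swaps $T_{\min}$ and $T_{\max}$ and interchanges level $k$ with level $n+1-k$, which gives the symmetry $a_i=a_{n+1-i}$ underlying the statement. Consequently, unimodality reduces to producing injections $\varphi_k:\{T:\ell(T)=k\}\hookrightarrow\{T:\ell(T)=k+1\}$ for every $k<m=\lfloor n/2\rfloor$.

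The main obstacle is constructing such an injection $\varphi_k$. The natural candidate is to fix a planar ordering of the edges of $G$ and, given a non-maximal spanning tree $T$, apply the \emph{leftmost admissible} clock move, i.e., the clock move supported on the earliest edge of $T$ at which $T$ disagrees with $T_{\max}$. Injectivity of $\varphi_k$ then amounts to a \emph{reconstruction lemma}: from the output tree $T'=\varphi_k(T)$ one must be able to recover $T$ by identifying a canonical counter-clockwise move on $T'$. Proving this reconstruction is where planarity is decisive; the plan is to show that the leftmost disagreement with $T_{\max}$ in $T'$ sits in a position forced by the face structure of $G$, so the inverse move is unique. This step should also use the MOY constraints to rule out the pathological configurations that have so far obstructed the analogous statement for alternating knots, and it is the principal technical hurdle of the whole argument.
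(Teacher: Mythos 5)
Your first step agrees with the paper: both set up a spanning tree model $\Delta_G(t)\doteq\sum_T t^{\ell(T)}$ and a notion of clock moves changing the weight by $\pm 1$, with a Clock Theorem giving connectivity (this is Theorem \ref{Thm:spanningtreemodel} and Theorem \ref{Thm:Clock} in the paper; note that even the fact that a clock move changes the degree by exactly one is nontrivial and requires the local/global dichotomy and a discrete divergence argument, Lemma \ref{Lemma:degreedifference}). After that, however, your argument has a genuine gap: you reduce unimodality to injections $\varphi_k$ from level $k$ to level $k+1$ and propose the ``leftmost admissible clock move,'' but the reconstruction lemma that would give injectivity is only named, not proved --- and that is precisely the hard content of the theorem. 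Even before injectivity, you would need that every tree strictly below the middle level admits a norm-increasing move landing where you want it; connectivity of the clock-move graph does not give such a level-by-level matching. In addition, several structural claims you lean on are unsubstantiated and likely false as stated: that $(\mathcal{T}(G),\prec)$ is a distributive lattice with a \emph{unique} minimum of weight $1$ and a \emph{unique} maximum of weight $n$ (nothing in the paper forces $a_1=a_n=1$), and that a ``planar duality involution'' $T\mapsto T^{\vee}$ acts on spanning trees of $G$ exchanging levels $k$ and $n+1-k$ --- planar duality sends a spanning tree of $G$ to a spanning tree of the dual graph $G^*$, not of $G$, so this involution does not exist in the form you use to get the symmetry $a_i=a_{n+1-i}$.

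The paper takes a different route that avoids injections altogether. It classifies clock moves as \emph{local} or \emph{global}, encodes trees as lattice points $\vec{x}=(x_1,\dots,x_k)$ with norm $|\vec{x}|=\sum x_i$, and shows that each equivalence class under local moves is a full rectangular box $\prod_i[m_{\alpha,i},M_{\alpha,i}]$ (Proposition \ref{proposition:rectangledecomposition}). Each box contributes a product of intervals $\prod_i\bigl(\sum_{j=m_{\alpha,i}}^{M_{\alpha,i}}t^j\bigr)$, which is symmetric and trapezoidal. The crux is Proposition \ref{proposition:sameaveragenorm}: a global clock move preserves the average norm of the box, proved by carefully tracking how many local moves are available at each vertex before and after the move and invoking the balance of edges entering and leaving the dual cycle $C^*$. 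Since all boxes are symmetric unimodal polynomials centered on the same axis, their sum is unimodal. If you want to salvage your approach, you would essentially have to prove an analogue of this rectangle decomposition anyway, since a direct injective matching between adjacent levels of the full tree poset is exactly the kind of statement that has resisted proof in the alternating-knot setting.
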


As an immediate corollary, note that a planar singular knot $\mathcal{K}$ can be viewed as a specific class of plane MOY graph with a trivial coloring. The Alexander polynomial $\Delta_\mathcal{K}(t)$ of singular knots has been previously defined in the literature via skein relations or state sum, cf. Ozsv\'ath-Stipsicz-Szab\'o \cite[Section 3]{OSS}.  It can be shown that this coincides with our more general version of the Alexander polynomial for a graph. Consequently,

\begin{corollary}
Suppose $\mathcal{K}$ is a planar singular knot and $\Delta_\mathcal{K}(t)\doteq \sum_{i=1}^n a_i t^i$.  Then the sequence $(a_1,\dots,a_n)$ is unimodal.
\end{corollary}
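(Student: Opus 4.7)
The plan is to deduce this directly from Theorem \ref{Thm:Trapezoidal} by identifying a planar singular knot with a plane MOY graph. First I would construct, from a planar singular knot $\mathcal{K}$, an associated plane MOY graph $G_{\mathcal{K}}$: view each transverse double point as a $4$-valent vertex, orient the edges using the knot orientation, take the transverse orientation induced by the planar embedding (with two incoming and two outgoing strands alternating in the cyclic order around each singular vertex), and assign every edge the color $1$. The MOY balance condition at each vertex is then automatic, since $1+1 = 1+1$, so $G_{\mathcal{K}}$ is a legitimate plane MOY graph.

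Next I would verify that the Alexander polynomial $\Delta_{\mathcal{K}}(t)$ defined by Ozsv\'ath-Stipsicz-Szab\'o in \cite[Section 3]{OSS} via skein relations (or equivalently a state sum over Kauffman states) coincides, up to a unit $\pm t^{k}$, with the Bao-Wu invariant $\Delta_{G_{\mathcal{K}}}(t)$. I expect this to be the main technical step of the argument. The cleanest route is to cast both polynomials in a common combinatorial form: recall from \cite{BW} that $\Delta_{G}(t)$ admits a Kauffman-style state sum indexed by the regions of a planar MOY diagram, and at a color-$1$ four-valent vertex the local weights are precisely the two resolution coefficients used in the singular skein relation of \cite{OSS}. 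Comparing the two state sums vertex by vertex, together with checking the common normalization on the unknot, yields the desired equality $\Delta_{\mathcal{K}}(t) \doteq \Delta_{G_{\mathcal{K}}}(t)$.

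Once this identification is established, the corollary is immediate: by construction $G_{\mathcal{K}}$ is a plane MOY graph, so Theorem \ref{Thm:Trapezoidal} forces the coefficient sequence $(a_{1},\dots,a_{n})$ of $\Delta_{G_{\mathcal{K}}}(t)$ to be unimodal, and unimodality is preserved under multiplication by a power of $t$. The only genuine obstacle is the identification step in the second paragraph; everything else is a formal transport of the theorem through the equivalence of the two polynomial definitions.
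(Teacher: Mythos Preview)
Your approach is exactly the one the paper takes: it simply notes that a planar singular knot is a plane MOY graph with trivial coloring, asserts (citing \cite{OSS}) that the singular-knot Alexander polynomial agrees with the Bao--Wu polynomial, and invokes Theorem~\ref{Thm:Trapezoidal}. One small correction: at a singular double point the two incoming strands are \emph{adjacent} in the cyclic order (in--in--out--out), not alternating; the alternating pattern in--out--in--out is precisely the non-transverse configuration excluded by Definition~\ref{Def:MOY} (cf.\ Figure~\ref{fig:transverse}), so your parenthetical description should be fixed, but the construction itself goes through.
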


The main theory used to obtain the above results is the development of a \textit{spanning tree model} for the Alexander polynomial. The idea is to associate a power $t^{|T|}$ to each spanning tree $T$ of $G$, where $|T|$ denotes a suitable norm that can be defined, and the sum over all spanning trees yields the Alexander polynomial $\Delta_G(t)$ (Theorem \ref{Thm:spanningtreemodel}).  In light of this spanning tree model, we may reinterpret the trapezoidal property as a statistical assertion that \textit{there are a greater number of spanning trees with average norms}, a proposition that appears both intuitive and reasonable.    

\bigskip
The remainder of this paper is organized as follows.  In Section 2, we review the basic construction of state sums and Alexander polynomials. We give a simple proof of their invariance under the parallel edge replacement moves, which will help simplify our subsequent discussion of spanning tree models.  In Section 3, we introduce the notion of a clock move on spanning trees, which is inspired by Kauffman's classical definition of a clock move on Kauffman states in knot theory \cite{MR712133}. Likewise, we prove a clock theorem connecting all spanning trees by clock moves. In Section 4, we describe a canonical combinatorial bijection between Kauffman states and spanning trees for plane graphs. This, together with the clock theorem, enables us to develop a spanning tree model of the Alexander polynomial. In Section 5, we undertake a more detailed examination of the clock moves on spanning trees. It becomes evident that the planar condition of graphs provides a natural classification of clock moves into two categories: local and global. This is a key ingredient that is absent from Kauffman's classical theory and enables us to prove the trapezoidal conjecture in our context. In Section 6, we compare our spanning tree model with Crowell's model on alternating knots, hoping to give new insights to Fox's conjecture.  

\medskip\noindent
\textbf{Acknowledgements.}  The authors would like to thank Zhiyun Cheng for helpful discussions and suggestions.  The second author was partially supported by a grant from
the Hong Kong Research Grants Council (Project No. 14301819) and direct grants from CUHK.

\section{Background and Parallel edge replacements}\label{section:background}

In this section, we review the basic construction and properties of the Alexander polynomial and prove its invariance under the operation of parallel edge replacements.  For our purposes, we will not specify a framing of any graphs, so the Alexander polynomial is well-defined up to a power of $t$. 

\begin{definition} \label{Def:MOY}
    An \emph{MOY graph} $(G,c)$ is a transverse directed spatial graph $G$ with a \emph{balanced coloring} $c:E\to \mathbb{Z}_{\geq0}$, meaning:
    $$\sum_{e:\text{ pointing into }v}c(e)=\sum_{e:\text{ pointing out of }v}c(e).$$  In the special case where the balanced coloring $c$ is trivial, i.e., $c(e)=1$ for all edges $e$, the graph will be called a \textit{balanced transverse graph}. 
\end{definition}

    For a detailed and rigorous definition of a transverse directed spatial graph, see \cite{HO}. Roughly speaking, the transverse condition requires that at each vertex $v$, all the edges pointing into $v$ are on one side of $v$ while all the edges pointing out of $v$ are on the other side.  In the case of plane graphs, this implies a clockwise ordering of edges, with edges pointing into $v$ preceding those pointing out of $v$.  See Figure \ref{fig:transverse}.

    \begin{figure}[!h]
    \centering
    \includegraphics[width=10cm]{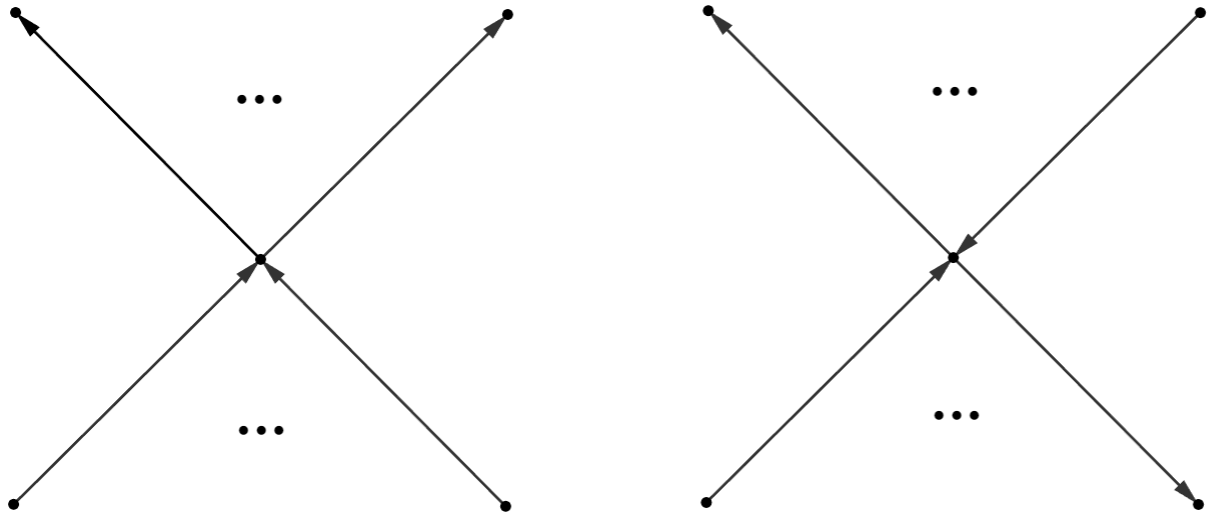}
    \caption{An orientation satisfying (left) / not satisfying (right) the transverse condition.}
    \label{fig:transverse}
\end{figure}

Following the exposition of \cite[Section 3]{BWSpanning}, we now describe the Kauffman states for a plane graph $G$. Starting from a plane graph $G$, we can obtain a {\it decorated diagram} $(G, \delta)$ by putting a base point $\delta$ on one of the edges in $G$ and drawing a circle around each vertex of $G$.  Define:\\

\begin{enumerate}

\item  $\operatorname{Cr}(G)$: denotes the set of crossings \diaCircle which are the intersection points around each vertex between the incoming edges with the circle.  Such a crossing is said to be generated by the edge. Denote by $cr(e)$ the crossing generated by edge $e$.\\
\item $\operatorname{Re}(G)$: denotes the set of regions, including the {\it regular regions} of $\mathbb{R}^{2}$ separated by $G$ and the {\it circle regions} around the vertices. Note that there is exactly one circle region around each vertex. {\it Marked regions} are the regions adjacent to the base point $\delta$, and the others are called {\it unmarked regions}.\\

\item Corners: There are three corners around a crossing \diaCircle, and we call the one inside the circle region the {\it north} corner, the one on the left of the crossing the {\it west} corner and the one on the right the {\it east} corner.  Note also that every corner belongs to a unique region in $\operatorname{Re}(G)$.  \\

\begin{figure}[h!]
\begin{tikzpicture}[baseline=-0.65ex, thick, scale=0.9]
\draw (0, 0.5) ellipse (1.5cm and 0.8cm);
\draw (0,-1) [->-] to (0,-0.3);
\draw (-0.4, -0.6) node {W};
\draw (0.4, -0.6) node {E};
\draw (0, 0) node {N};
\end{tikzpicture}
\end{figure}

\end{enumerate}

Calculating the Euler characteristic of $\mathbb{R}^2$ using $G$ shows
$$\vert \operatorname{Re}(G) \vert = \vert \operatorname{Cr}(G) \vert+2.$$
Also, a generic base point $\delta$ is adjacent to two distinct regions, which will be denoted by $R_u$ and $R_{v}$. 

\begin{definition}
\label{states}
A {\it Kauffman state} for a decorated diagram $(G, \delta)$ is a bijective map $$s: \, \operatorname{Cr}(G)\rightarrow \operatorname{Re}(G)\backslash \{R_u, R_{v}\},$$ which sends a crossing in $\operatorname{Cr}(G)$ to one of its corners. Denote $S(G, \delta)$ the set of all Kauffman states.
\end{definition}

\begin{definition}\label{Def:KauffmanStateSum}
Suppose $(G, \delta)$ is a decorated plane diagram with $n$ crossings $C_1, C_2, \cdots, C_{n} \in \operatorname{Cr}(G)$ and $n+2$ regions $R_1, R_2, \cdots, R_{n+2} \in \operatorname{Re}(G)$, and the base point $\delta$ is placed on an edge $e_0$ with color $i_0$. 

\begin{enumerate}
\item We define a local contribution $P_{C_p}^{\triangle}(t)$ as in Figure \ref{fig:f2}.
\begin{figure}[h!]
\begin{tikzpicture}[baseline=-0.65ex, thick, scale=0.9]
\draw (0, 0.5) ellipse (1.5cm and 0.8cm);
\draw (0,-2) [->-] to (0,-0.3);
\draw (-0.5, -0.6) node {$t^{-i/2}$};
\draw (0.5, -0.6) node {$t^{i/2}$};
\draw (0, 0.2) node {$[i]$};
\draw (0.3, -1.5) node {$i$};
\end{tikzpicture}
\hspace{2cm}
\begin{tikzpicture}[baseline=-0.65ex, thick, scale=0.9]
\draw (0, 0.5) ellipse (1.5cm and 0.8cm);
\draw (0,-2) [->-] to (0,-0.3);
\draw (0, 0.2) node {$t^{i_0/2}$};
\draw (0.4, -1.3) node {$i_0$};
\draw (0, -1.7) node {$*$};
\draw (-0.3, -1.7) node {$\delta$};
\end{tikzpicture}
	\caption{The local contributions $P_{C_p}^{\triangle}(t)$ for crossings generated by generic unmarked edges (left) and the marked edge (right), respectively.}
	\label{fig:f2}
\end{figure}
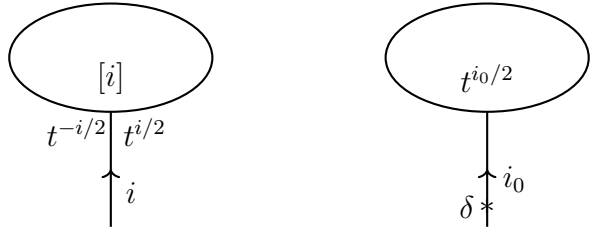

Here, $$[i]:= \frac{t^{i/2}-t^{-i/2}}{t^{1/2}-t^{-1/2}}=t^{\frac{i-1}{2}}+\cdots +t^{\frac{1-i}{2}}. $$

\item For each Kauffman state $s$, let $$P_s(t):= \prod_{p=1}^{n} P_{C_p}^{s(C_p)}(t).$$

\item The {\it Kauffman state sum/Alexander polynomial} is defined as
\begin{equation*} \label{equation: plane}
\Delta_{(G, c)}(t):=\sum_{s\in S(G, \delta)}  P_s(t).
\end{equation*}
This is well-defined up to a power of $t$ and is independent of the choice of $\delta$.  For further details, please refer to \cite[Section 2]{BW}. 

\end{enumerate}
\end{definition}

\bigskip

Next, we show the invariance of the Alexander polynomial under the move of \textit{parallel edge replacements}.

\begin{definition}\label{Def:paralleledge}
When we replace an edge $e$ of color $n$ in a plane MOY graph $G$ by $n$ parallel edges of color $1$, as depicted in Figure \ref{fig:parallel}, such an operation is called a parallel edge replacement.
\end{definition}

\begin{figure}[!h]
    \centering
    \includegraphics[width=10.5cm]{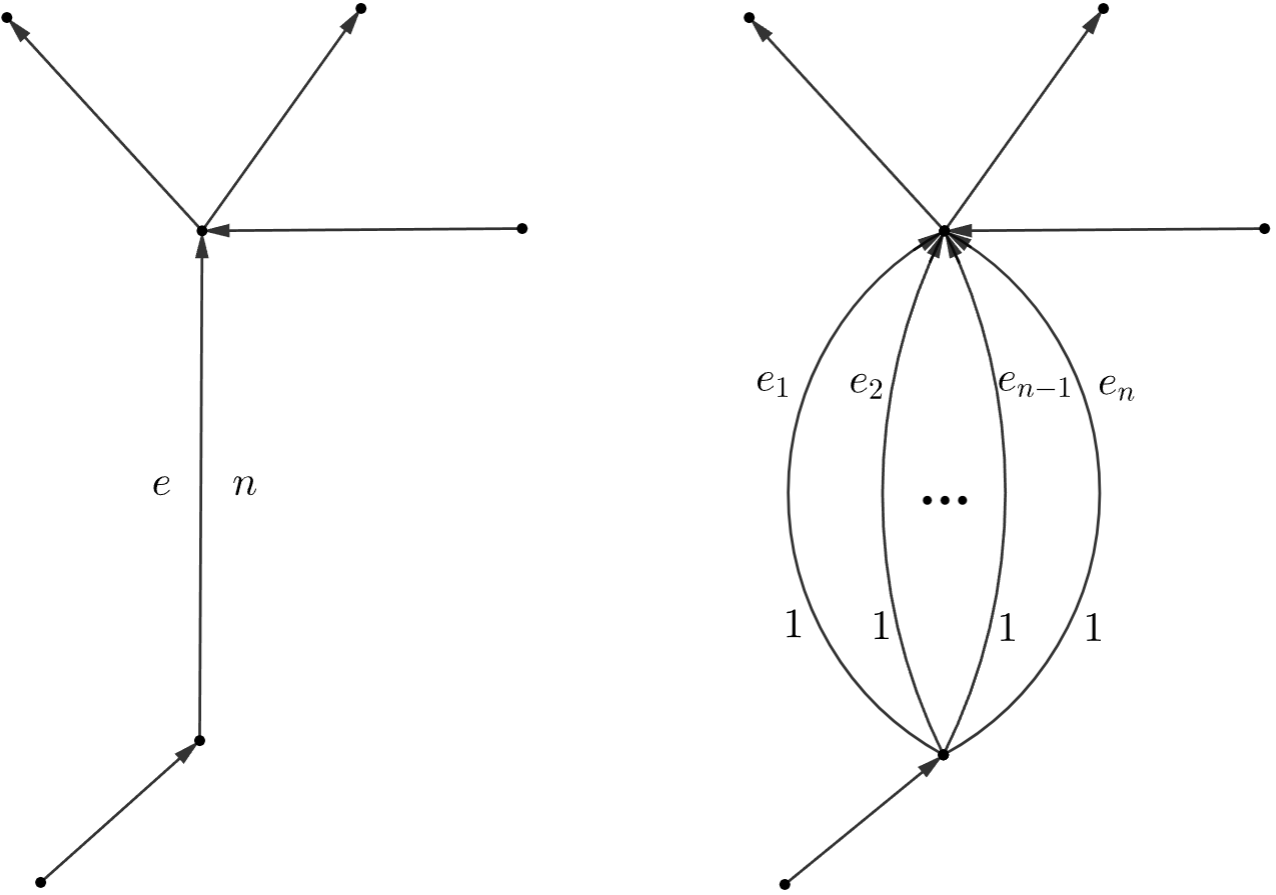}
    \caption{A parallel replacement move on an edge of color $n$}
    \label{fig:parallel}
\end{figure}

\begin{theorem}  \label{Thm:parallelinvariant}

If two plane MOY graphs $G$ and $G'$ are related by a sequence of parallel edge replacement operations, then their Alexander polynomials  $\Delta_{G}(t)= \Delta_{G'}(t)$. 

\end{theorem}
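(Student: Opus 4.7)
The plan is to induct on the number of replacement moves, so it suffices to handle a single one. Suppose $G'$ is obtained from $G$ by replacing a color-$n$ edge $e$ (directed from $u$ to $v$) by $n$ parallel color-$1$ edges $e_1,\dots,e_n$. Since $\Delta_G(t)$ is independent of the base point $\delta$ up to a power of $t$, I place $\delta$ on an edge not involved in the move, so the contribution of the marked edge is identical in $G$ and $G'$. Because $e$ is incoming only at $v$, the crossing $cr(e)$ is replaced at $v$ by $n$ new crossings $c_1,\dots,c_n$, while $\operatorname{Re}(G)$ acquires $n-1$ thin strip regions $S_1,\dots,S_{n-1}$ between consecutive parallel edges. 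All other crossings and their corners are unchanged.

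Let $W$, $E$, $C$ denote the west region, east region, and circle region around $v$ at $cr(e)$; these are also the only non-strip corners of each $c_j$. The key observation is that the external crossings (those other than $cr(e)$ and the $c_j$'s) have identical corners in $G$ and $G'$, since each $S_j$ is incident only to $c_j$ and $c_{j+1}$. A counting argument using the Kauffman state bijection then shows that in either graph, the restriction of a state to the external crossings is a valid partial matching precisely when it fills every non-marked region except exactly one element $T\in\{W,E,C\}$. Thus the collections of valid external assignments coincide, with identical external contributions.

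It remains to show, for each $T$, that the sum of local contributions agrees between $G$ and $G'$. In $G$ this is simply the single contribution $t^{-n/2}$, $t^{n/2}$, or $[n]$ at $cr(e)$ according as $T=W,E,C$. For $G'$ I trace the strip-filling constraints. When $T=W$, the choice $c_1=W$ is forced, and then $c_j=S_{j-1}$ (west corner) is forced for each $j\ge 2$, giving a total contribution of $(t^{-1/2})^n=t^{-n/2}$. The case $T=E$ is symmetric and gives $t^{n/2}$. When $T=C$, exactly one $c_j$ must occupy the north corner; the strip constraints then force $c_i=S_i$ (east) for $i<j$ and $c_i=S_{i-1}$ (west) for $i>j$, contributing $t^{(2j-1-n)/2}$, and summing over $j=1,\dots,n$ yields $\sum_{j=1}^{n}t^{(2j-1-n)/2}=[n]$.

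Assembling the external and local pieces gives $\Delta_G(t)\doteq\Delta_{G'}(t)$. The main obstacle is the bookkeeping in the $T=C$ case: one must verify that the strip constraints indeed pin down exactly $n$ local configurations (one for each position $j$ of the north corner) and that their weights collectively realize the quantum integer $[n]$. Everything else reduces to a formal counting argument and two straightforward unique-completion claims.
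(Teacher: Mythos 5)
Your proof is correct, but it follows a genuinely different route from the paper's. The paper uses the freedom in choosing the base point more aggressively: it places $\delta$ on the replaced edge $e$ itself in $G$, and on $e_1$ in $G'$. Then the east and west corners of $cr(e)$ (resp.\ $cr(e_1)$) lie in the two marked regions, so that crossing is forced into its north corner, and the strip regions then force $cr(e_2),\dots,cr(e_n)$ into their east corners. This produces an honest bijection between $S(G,\delta)$ and $S(G',\delta)$ in which corresponding states contribute the same monomial ($t^{n/2}$ from the marked crossing, resp.\ $t^{1/2}\cdot(t^{1/2})^{n-1}$), so no case analysis and no summation is needed. Your choice of a base point away from the move instead yields a correspondence that is one-to-one on the east/west configurations but one-to-$n$ on the north configuration, and its correctness rests on the identity $\sum_{j=1}^{n}t^{(2j-1-n)/2}=[n]$, which you verify; your supporting claims (each strip $S_j$ meets only the corners of $c_j$ and $c_{j+1}$, by transversality at both endpoints, and the forced completions in each case) are sound. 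The trade-offs: your argument is longer, and it needs an edge other than $e$ on which to park $\delta$ (a degenerate situation the paper's placement avoids entirely); on the other hand, a non-bijective correspondence of exactly this flavor is what the authors themselves say is required for the general spatial (non-planar) version of the theorem, so your structure is closer to what a generalization would need, whereas the paper's bijection is the shortest path for the planar statement at hand. One cosmetic point: since the Alexander polynomial here is only defined up to a power of $t$, your conclusion $\Delta_G(t)\doteq\Delta_{G'}(t)$ is the right normalization-free statement, and with the paper's base-point choice one even gets equality on the nose.
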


\begin{proof}

Suppose $e$ is an edge of color $n$ pointing at $v$ in a plane MOY graph $G$. We replace $e$ by $n$ parallel edges $e_1,\dots e_n$ pointing at $v$, and obtain a new plane graph $G'$.  Place the base point $\delta$ on $e$ for $G$ and on $e_1$ for $G'$, respectively. Note that each Kauffman state $s\in S(G,\delta)$ assigns $cr(e)$ to its north corner, while each Kauffman state $s' \in S(G',\delta)$ assigns $cr(e_1)$ to its north corner and $cr(e_2), \cdots, cr(e_n)$ to their respective east corners.  See Figure \ref{fig:replacement}.
This sets up an obvious bijection between $S(G, \delta)$ and $S(G', \delta)$ for two states $s$ and $s'$ that have identical corner assignments on all other crossings.  

\begin{figure}[!h]
    \centering
    \includegraphics[width=9cm]{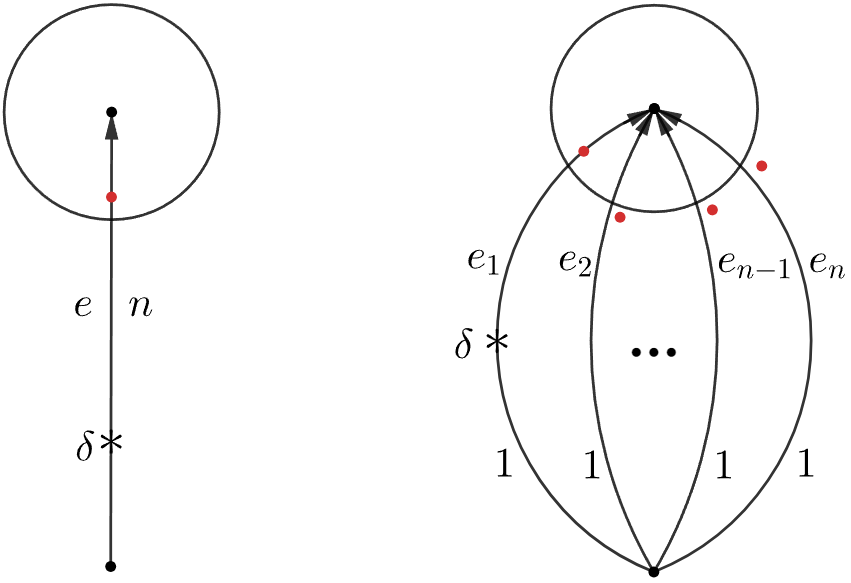}
    \caption{The corresponding Kauffman states resulting from a parallel replacement move on an edge of color $n$.}
    \label{fig:replacement}
\end{figure}

Using Definition \ref{Def:KauffmanStateSum}, we see that the local contribution of $s$ at $cr(e)$ is $t^{\frac{n}{2}}$, while the local contribution of $s'$ at each of the $n$ crossings $cr(e_i)$'s is $t^{\frac{1}{2}}$ whose product is also $t^{\frac{n}{2}}$. Since the assignments are the same on other crossings, the total contribution $P_s(t)$ of the Kauffman state $s$ on $G$ equals the total contribution $P_{s'}(t)$ of the corresponding Kauffman state $s'$ on $G'$. Summing over all Kauffman states gives the identity on Alexander polynomials.  

\end{proof}

Hence, for each plane MOY graph $G$, there exists a (canonical) plane balanced transverse graph $G'$ that may be constructed by replacing all edges of color $n>1$ in $G$ by $n$ parallel edges, which has the same Alexander polynomials. This reduction to trivial coloring is useful for simplifying multiple arguments and expositions in later sections of the paper.  In particular, note that in $G'$, a north corner always contributes the value of $1$, an east corner always contributes a constant factor of $t^\frac{1}{2}$ and a west corner always contributes a constant factor of $t^{-\frac{1}{2}}$. Consequently, the term $P_{s'}(t)$ is always a monomial. Furthermore, the degree can be readily determined by the difference between the number of east corners and west corners in the assignment of $s'$.

\medskip

It turns out that Theorem \ref{Thm:parallelinvariant} also holds for general spatial MOY graphs that are not necessarily plane. The proof involves a considerably more tricky construction of a correspondence between $S(G, \delta)$ and $S(G', \delta)$, which is typically not bijective, such that the corresponding Kauffman state sums are identical.  Given the lengthy and technical nature of the argument, and the fact that the result is not directly relevant to the primary focus of our paper, we will omit it here and present it in a future paper, hopefully in a more appropriate context.

\section{Clock moves and clock theorem}\label{section:clock}

In the remainder of this paper, $G$ always refers to a \textit{plane} balanced transverse graph (see Definition \ref{Def:MOY}; in particular, $c(e)=1$ for all edges) unless otherwise specified.  We should place the base point $\delta$ on an edge $e_0$ that is adjacent to the \textit{unbounded} region.  Let $r=v_0$ denote the head of $e_0$.  The remaining vertices of $G$ are ordered arbitrarily and denoted by $v_1, v_2, \cdots v_k$. For each vertex $v_i$, $1\leq i \leq k$, assume there are a total number of $d_i$ incoming edges. Basically, $d_i$ is the \textit{degree} of the vertex $v_i$, and the balanced coloring condition implies that there are also $d_i$ outgoing edges from the vertex $v_i$.

Next, we will order the $d_i$ edges entering $v_i$ in counter-clockwise order.  So the ``leftmost'' edge is the first edge; the next is the second edge and so on.  For each integer $x_i$ with $1\leq x_i \leq d_i$, we can talk about the $x_i$-th edge entering the vertex $v_i$, counting counter-clock-wisely, and denote it by $e_{i, x_i}$. We can then associate a subgraph of $G$ with $k$ edges by a lattice point $\vec{x}=(x_1, x_2, \cdots , x_k)$.

\begin{definition}\label{Def:lattice}
Suppose $1\leq x_i \leq d_i$ for $i=1, 2, \cdots, k$. We use $\vec{x}=(x_1, x_2, \cdots , x_k)$  to denote the subgraph $H$ with the edge set $\bigcup\limits_{i=1}^k e_{i, x_i}$, the union of the $x_i$-th edge entering the vertex $v_i$.  We also say that the lattice point $\vec{x}$ \textit{generates} or \textit{represents} $H$.

\end{definition}

Hence, there are a total of $d_1d_2\cdots d_k$ lattice points in the $k$-dimensional space, which gives rise to $d_1d_2\cdots d_k$ distinct subgraphs $H$ of $G$.  

\begin{definition}
Given a vertex $r$ in an oriented graph $G$, an {\it oriented spanning tree of $G$ rooted at $r$} is a spanning subgraph $T$ that satisfies the following three conditions:
\begin{enumerate}
\item[(i)] Every vertex $v\neq r$ has in-degree $1$.
\item[(ii)] The root $r$ has in-degree $0$.
\item[(iii)] $T$ has no oriented cycle.
\end{enumerate}

\end{definition}

As the in-degree of each vertex (except $r$) is 1 for $H$, it follows that:

\begin{proposition}\label{proposition:equivalenceofspanning}
A subgraph $H$ represented by a lattice point $\vec{x}$ is an oriented spanning tree rooted at $r$ 
if and only if the following equivalent conditions hold:

\begin{enumerate}
    \item[(a)] The graph $H$ is connected.

    \item[(b)] The graph $H$ does not have a cycle.
    
\end{enumerate}

\end{proposition}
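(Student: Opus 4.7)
The plan is to exploit the rigid degree constraints built into the lattice parametrization. By construction, $H$ has exactly $k$ edges (one entering each of $v_1, \ldots, v_k$) and $k+1$ vertices. Moreover, the in-degree of each $v_i$ for $i \geq 1$ in $H$ is exactly $1$ (coming from the chosen edge $e_{i, x_i}$), and the in-degree of the root $r = v_0$ is $0$, since the lattice point $\vec{x}$ does not select any edge entering $r$. Hence conditions (i) and (ii) in the definition of an oriented spanning tree are automatically satisfied, and being an oriented spanning tree reduces to verifying condition (iii), the absence of an oriented cycle.

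First I would establish (a) $\Leftrightarrow$ (b). This is the standard tree characterization: a graph with $n$ vertices and $n-1$ edges is connected if and only if it contains no undirected cycle, both being equivalent to the graph being a tree. Applied with $n = k+1$, this gives the equivalence of (a) and (b) immediately.

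Next I would show that (iii) is equivalent to (b). One direction is trivial, since an oriented cycle is in particular an undirected cycle. For the converse, suppose $H$ contains an undirected cycle on vertices $u_1, u_2, \ldots, u_m$. Counting the contributions of the $m$ cycle-edges to the in-degrees of the $m$ cycle-vertices, the total equals $m$. Since every vertex of $H$ has total in-degree at most $1$, each cycle-vertex can receive at most one incoming cycle-edge, so the total $m$ forces equality everywhere. Consequently each cycle-vertex has exactly one incoming and one outgoing cycle-edge, which forces the cycle to be coherently oriented — contradicting (iii). (Note that the root cannot lie on such a cycle, since its in-degree in $H$ is $0$, which would make the above sum strictly less than $m$.) Thus (b) implies (iii), completing the equivalence.

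Combining the two equivalences, ``$H$ is an oriented spanning tree rooted at $r$'', (a), and (b) are all equivalent. No real obstacle is anticipated: beyond the standard fact about trees, the only substantive ingredient is the observation that the in-degree bound of $1$ at every vertex forces any undirected cycle in $H$ to be automatically coherently oriented, which is a short counting argument.
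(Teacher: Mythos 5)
Your argument is correct and matches the paper's intent: the paper states this proposition without proof, presenting it as an immediate consequence of the fact that every vertex other than $r$ has in-degree $1$ in $H$. Your write-up simply fills in the standard details (the $k$-edges-on-$(k+1)$-vertices tree criterion for (a)$\Leftrightarrow$(b), and the in-degree counting showing any undirected cycle would be coherently oriented), which is exactly the reasoning the authors leave implicit.
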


Let $\mathcal{T}_r(G)$ denote the set of all oriented spanning trees of $G$ rooted at $r$. From this point onwards, any mention of a spanning tree shall be understood to refer to an oriented spanning tree rooted at a fixed vertex $r$.

\begin{definition}
\begin{enumerate}
    \item We denote $\vec{x}\preceq \vec{x'}$ if $x_i\leq x_i'$ for all $i$.

    \item $\vec{x}$ and $\vec{x'}$ are called \textit{neighboring} if there is a unique index $i$ such that $x_i=x'_i\pm 1$; and $x_j=x'_j$ for all other $j$.

    \item The \textit{distance} $d(\vec{x}, \vec{x'})$ between two lattice points is defined as the number of index $i$ such that $x_i\neq x_i'$.  

    \item The \textit{norm} of $\vec{x}$ is  $|\vec{x}|=x_1+x_2+\cdots x_k.$
    
\end{enumerate}
   
\end{definition}

Clearly, $\vec{x}$ and $\vec{x'}$ are neighboring lattice points if and only if $d(\vec{x}, \vec{x'})=1$ and $|\vec{x}|=|\vec{x'}|\pm 1$.  Also, suppose $H$ and $H'$ are the two subgraphs generated by $\vec{x}$ and $\vec{x'}$, respectively.  Then $H$ and $H'$ differ by $m$ edges (or equivalently, they have $k-m$ common edges) if and only if $d(x,x')=m$.

Finally, we shall define the \textit{clock moves} between two spanning trees.  This notion is inspired by Kauffman's work on knot diagrams \cite{MR712133} and serves as a fundamental element in this research.   

\begin{definition}[Clock Moves]

Two spanning trees $T$ and $T'$ are said to be related by a clock move if they are represented by neighbouring lattice points $\vec{x}$ and $\vec{x'}$ respectively.  

\end{definition}

Graphically, a clock move can be viewed as ``rotating'' an edge of one spanning tree to the neighboring edge in the clockwise order. See Figure \ref{fig:clock} for an illustration.

\begin{figure}[!h]
    \centering
    \includegraphics[width=13cm]{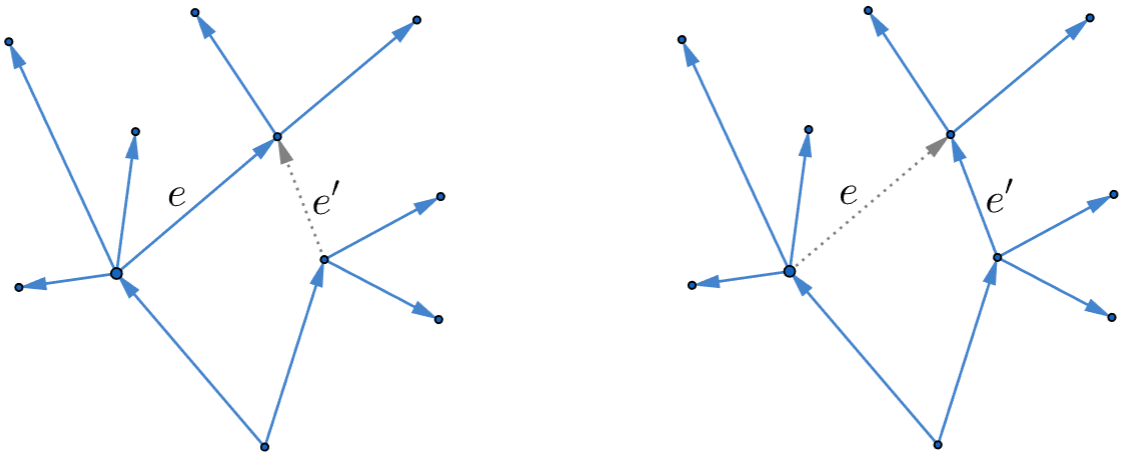}
    \caption{The counter-clockwise clock move turning $e$ to $e'$. Blue edges are contained in the spanning trees while dotted edges are in the complement of the spanning trees.}
    \label{fig:clock}
\end{figure}

\begin{remark}
Although the above edge-rotation move can in principle be defined for arbitrary subgraphs, it is important to note that within this paper, the term ``clock move'' will refer specifically to a move between two spanning trees. 

\end{remark}

As with the Clock Theorem for knot diagrams \cite{MR712133}, we will establish the following analogous result in this section.  

\begin{theorem}[Clock Theorem] \label{Thm:Clock}

Any two spanning trees can be related by a sequence of clock moves.  
    
\end{theorem}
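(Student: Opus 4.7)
The plan is to model the proof on Kauffman's original clock theorem for Kauffman states \cite{MR712133}, establishing connectivity of the graph on $\mathcal{T}_r(G)$ whose edges are clock moves by induction on a natural distance. For two spanning trees $T, T'$ with lattice points $\vec{x}, \vec{x}'$, set
\[
  d(T,T') := \sum_{i=1}^{k}|x_i - x_i'|.
\]
The base case $d(T,T')=0$ forces $T=T'$ and is trivial.

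For the inductive step, the heart of the matter is the following Key Lemma: \emph{if $T \neq T'$, there exists some index $i$ with $x_i \neq x_i'$ such that the clock move rotating $x_i$ by one unit toward $x_i'$ --- that is, $x_i \mapsto x_i+1$ when $x_i<x_i'$, or $x_i \mapsto x_i-1$ when $x_i>x_i'$ --- yields another spanning tree $T''$.} Granting the lemma, $d(T'', T')=d(T,T')-1$, so the inductive hypothesis applied to $(T'',T')$ supplies a sequence of clock moves from $T''$ to $T'$ which, prepended by $T \to T''$, connects $T$ to $T'$.

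The main obstacle is the Key Lemma, which is where planarity must be used essentially. I would argue by contradiction: suppose every monotone rotation creates a cycle. Each failed rotation at $v_i$ exhibits a concrete directed cycle $C_i$ consisting of the candidate new edge $e_{i, x_i \pm 1}$ together with the unique path in $T \setminus \{e_{i,x_i}\}$ from $v_i$ back to the tail of that edge. Viewing $\{C_i\}_{i \in I}$ (with $I := \{i : x_i \neq x_i'\}$) as simple closed curves embedded in the plane, select an innermost cycle $C_j$ --- one whose enclosed disk is minimal under inclusion --- and examine the cyclic order of edges at each vertex of $C_j$. The planar structure, together with the convention that incoming edges are ordered counter-clockwise, forces a rigid pattern on the neighboring edges along $C_j$. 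From this one should be able either to exhibit a vertex of $C_j$ at which a monotone rotation actually succeeds (contradicting the assumption) or to conclude that $T'$ itself contains $C_j$, contradicting $T' \in \mathcal{T}_r(G)$.

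The principal technical difficulty lies in making this innermost-cycle analysis precise: one must carefully track orientations, the cyclic order at each vertex of $C_j$, and the partition of its edges into those of $T \cap T'$, $T \setminus T'$, and $T' \setminus T$. Planarity is essential throughout --- in particular, in guaranteeing a well-defined notion of innermost cycle with a bounded interior. Without it, the analogous clock theorem genuinely fails, and the nonplanar case admits explicit counterexamples.
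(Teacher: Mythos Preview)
Your inductive framework on the $\ell^1$-distance $\sum_i|x_i-x_i'|$ is sound, and the Key Lemma you formulate is in fact true. The gap is entirely in your proof of the Key Lemma: the innermost-cycle sketch is a hope, not an argument, and neither of the two proposed contradictions is forced by innermost-ness. The cycle $C_j$ consists of the candidate edge $e_{j,x_j\pm 1}$ together with the $T$-path from $v_j$ to its tail. For ``$T'$ contains $C_j$'' you would need every edge of that $T$-path to lie in $T'$ (i.e.\ $x_\ell=x_\ell'$ for each $v_\ell$ on the path) \emph{and} $e_{j,x_j\pm 1}\in T'$ (i.e.\ $x_j'=x_j\pm1$ on the nose); innermost-ness implies neither. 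For ``some monotone rotation along $C_j$ succeeds'' you would need a vertex $v_\ell$ on $C_j$ with $\ell\in I$ whose one-step rotation toward $x_\ell'$ lands outside the $T$-subtree at $v_\ell$; but vertices on $C_j$ need not lie in $I$, and when they do the rotation direction is dictated by the sign of $x_\ell'-x_\ell$, which bears no a priori relation to the cyclic geometry of $C_j$. You acknowledge the difficulty yourself; the point is that this is where all the content lives, and the sketch does not indicate how to close it.

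The paper sidesteps this by inducting on the \emph{Hamming} distance $d(\vec{x},\vec{x}')=|\{i:x_i\neq x_i'\}|$ rather than the $\ell^1$-distance, via two lemmas. Proposition~\ref{Prop:partial}: if $\vec{x}\preceq\vec{x}'$ and both represent trees then so does every intermediate lattice point; the proof is a one-paragraph planarity argument in which the planar orientation (clockwise or counter-clockwise) of a putative cycle in the intermediate determines which endpoint must also contain a cycle. This already handles Hamming distance $1$. Lemma~\ref{lemma:m-edgediff}: at Hamming distance $m\ge 2$, replace $x_1$ by $x_1'$ outright; if that is a tree the distance drops, and if instead a cycle $C$ appears then $C$ must contain some $e_{i,x_i}$ with $2\le i\le m$, and an elementary connectedness check shows that swapping $x_i\mapsto x_i'$ (keeping $x_1$) yields a tree. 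No global innermost-cycle bookkeeping is needed. Incidentally, your Key Lemma follows at once from these two statements: Lemma~\ref{lemma:m-edgediff} supplies an index $i$ with $(x_1,\dots,x_i',\dots,x_k)$ a tree, and Proposition~\ref{Prop:partial} applied to that single-coordinate segment delivers the one-step clock move toward $\vec{x}'$ that you wanted.
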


\begin{proposition}\label{Prop:partial}
Suppose $\vec{x}\preceq \vec{x'}$ and both generate spanning trees.  Then, for any intermediate $\vec{x}\preceq \vec{y}\preceq  \vec{x'}$, the subgraph generated by $\vec{y}$ is also a spanning tree.  
    
\end{proposition}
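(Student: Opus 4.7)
The plan is to argue by contradiction. Suppose $\vec{x} \preceq \vec{y} \preceq \vec{x'}$ with $\vec{x}, \vec{x'}$ both spanning trees, but the subgraph $H$ generated by $\vec{y}$ is not. By Proposition~\ref{proposition:equivalenceofspanning}, $H$ then contains a cycle $C$; because $r$ has in-degree $0$ in $H$, the cycle $C$ avoids $r$. As a simple closed curve in the plane, $C$ bounds a disk $\Omega$ by the Jordan curve theorem, and since $r$ lies on the unbounded face, $r \notin \bar{\Omega}$.

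The crux of the proof is a local geometric lemma at each $v \in V(C)$. The cycle uses the incoming edge $e_{v,y_v}$ together with a unique outgoing cycle edge $o_v$, and these two edges cut a small neighborhood of $v$ into two wedges. By the counter-clockwise indexing of incoming edges and the transverse orientation condition at $v$, the incoming edges $e_{v,j}$ with $j<y_v$ all emanate from $v$ into the wedge obtained by rotating counter-clockwise from $o_v$ to $e_{v,y_v}$, while those with $j>y_v$ emanate into the complementary wedge. Comparing with the ``interior is on the left of motion'' side of the directed cycle $C$, a short tangent-direction computation shows that the orientation of $C$ picks out a single wedge as the interior uniformly along $C$: either (a) at every $v \in V(C)$ the positions $j<y_v$ lie on the interior side, or (b) at every $v \in V(C)$ the positions $j>y_v$ lie on the interior side.

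In case (a) set $\vec{z} = \vec{x}$, and in case (b) set $\vec{z} = \vec{x'}$. The sandwich $\vec{x} \preceq \vec{y} \preceq \vec{x'}$ then ensures that at every $v \in V(C)$, the chosen edge $e_{v,z_v}$ either coincides with the cycle edge $e_{v,y_v}$ or extends from $v$ locally into the interior side of $C$. I now exploit that $\vec{z}$ is a spanning tree: for any $v \in V(C)$, trace the unique parent-path $v = u_0, u_1, \ldots, u_m = r$ in $\vec{z}$ by successively following incoming edges backward. The observation above forces $u_1 \in \bar{\Omega}$, but $r \notin \bar{\Omega}$, so there is a largest index $i$ with $u_i \in \bar{\Omega}$, and then $u_{i+1} \notin \bar{\Omega}$. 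The edge $e_{u_i, z_{u_i}}$ joins $u_{i+1}$ (outside) to $u_i$ (inside); by planarity this edge can meet $C$ only at shared vertices, forcing $u_i \in V(C)$. Applying the geometric lemma at $u_i$ now forces the tail $u_{i+1}$ to lie in $\bar{\Omega}$, a contradiction.

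The main obstacle I anticipate is to formulate the geometric lemma cleanly and verify it holds consistently along $C$. The local part, determining which incoming edges lie in which wedge at a single vertex, is essentially bookkeeping with the transverse cyclic order at $v$. The global part, showing that the interior-side wedge is determined uniformly by the orientation of $C$, requires a small ``left-of-motion'' computation that uses the Jordan curve structure of $C$. Once these planar inputs are established, the rest of the argument is a purely topological ``one cannot escape the disk $\Omega$'' argument.
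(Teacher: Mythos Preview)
Your argument is correct and rests on the same geometric core as the paper's proof: the orientation of the directed cycle $C$ (counter-clockwise versus clockwise) determines, uniformly at every vertex $v\in V(C)$, whether the incoming edges with index $j<y_v$ or with index $j>y_v$ point into the bounded disk $\bar\Omega$; this in turn singles out one of $\vec{x}$ or $\vec{x'}$ to contradict. The difference is organizational. The paper proceeds inductively: it shows that a single clockwise clock move from $H$ still produces a subgraph containing a cycle (by walking backward from the tail of the new edge, which lies in $\bar\Omega$), and then iterates down to $\vec{x}$. You instead jump directly to $\vec{z}=\vec{x}$ (or $\vec{x'}$) and run the ``cannot escape $\bar\Omega$'' argument once, on the parent-path in the assumed spanning tree $\vec{z}$. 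Your route is slightly cleaner in that it sidesteps the need to check that the cycle produced at each intermediate step is again counter-clockwise (a point the paper leaves implicit); the paper's step-by-step version, on the other hand, makes visible how the cycle itself deforms under clock moves, which is a picture one might want for other purposes.
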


\begin{proof}

Suppose the subgraph $H$ induced by $\vec{y}$ is not a spanning tree. In this case, it must contain a cycle, which we will denote by $C\subset H$. If the induced orientation of $C$ is going counter-clockwise, we claim that for all subgraph $H'$ obtained by a single clockwise clock move from $H$, it must also contain a cycle. Therefore, the subgraph given by $\vec{x}\, (\preceq \vec{y})$ must contain a cycle by induction and contradicts with the assumption that $\vec{x}$ generates a spanning tree.

To prove the above claim, note that in the event that such a clock move does not occur at any vertex of $C$, the cycle $C$ remains a cycle subsequent to the clock move. We now suppose that the clockwise clock move turns an edge $e\in C$ to $e'$. Then the head of $e'$ (denoted by $v$) is bounded inside or on $C$. Recall at the beginning of this section, we fix the root $r$ to be adjacent to the unbounded region.  Thus, the root $r$ lies outside the cycle $C$. We now walk ``backwards'' along $H'$ starting from $v$.  Remember that all vertices except $r$ have in-degree 1 by the construction of $H'$, so the standard argument in graph theory implies that we will eventually revisit some vertices.  This gives the desired cycle in $H'$. (See Figure \ref{fig:counterclosckwisecycle}.)

\begin{figure}[!h] 
    \centering
    \includegraphics[width=6cm]{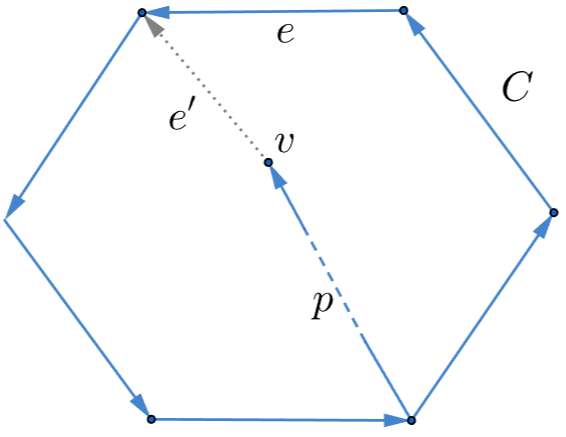}
    \caption{Stating from $v$ and walking backwards, we obtain a new cycle in $H'$ consisting of a path $p$, the ``left'' half of the original cycle $C$, and the new edge $e'$.}
    \label{fig:counterclosckwisecycle}
\end{figure}

Similarly, if the induced orientation of $C$ is going clockwise, then the subgraph given by $\vec{x'} \,(\succeq \vec{y})$ must contain a cycle.  In either case, we arrive at a contradiction.

\end{proof}

\begin{lemma} \label{lemma:2edgediff}
Let $T$ and $T'$ be two spanning trees that differ by two edges, that is, they have $k-2$ common edges.  Then there exists a spanning tree, denoted by $H$, which differs from both $T$ and $T'$ by a single edge.   
  
\end{lemma}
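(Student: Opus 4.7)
The plan is to consider the two natural candidates for $H$. Write $T = \vec{x}$ and $T' = \vec{x'}$ as lattice points differing in exactly the two coordinates $i$ and $j$, and set $a_1 = e_{i,x_i}$, $a_2 = e_{i,x'_i}$, $b_1 = e_{j,x_j}$, $b_2 = e_{j,x'_j}$. The only lattice points differing from both $\vec{x}$ and $\vec{x'}$ in exactly one coordinate are $\vec{y}$ (change only the $i$-th coordinate to $x'_i$, i.e.\ replace $a_1$ by $a_2$) and $\vec{z}$ (change only the $j$-th coordinate, i.e.\ replace $b_1$ by $b_2$); so it suffices to show that at least one of $\vec{y}$, $\vec{z}$ represents a spanning tree. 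I argue by contradiction.

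Since $T$ is an oriented spanning tree rooted at $r$, every non-root vertex $v$ has a unique incoming edge in $T$, and removing it splits $T$ into two pieces. Write $T_v$ for the piece not containing $r$, which I call the \emph{subtree rooted at $v$}. The standard tree-exchange observation, combined with Proposition~\ref{proposition:equivalenceofspanning}, says that $\vec{y}$ is a spanning tree if and only if the tail of $a_2$ does \emph{not} lie in $T_{v_i}$; analogously for $\vec{z}$. Assuming that both $\vec{y}$ and $\vec{z}$ fail, the tail of $a_2$ lies in $T_{v_i}$ and the tail of $b_2$ lies in $T_{v_j}$.

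Now consider the forest $T \setminus \{a_1, b_1\}$, which has three connected components, and let $R$ denote the one containing $r$. A brief case analysis on the position of $v_i$ relative to $v_j$ in the rooted tree $T$ (whether one is an ancestor of the other, or they are incomparable) shows that $R$ is always disjoint from both $T_{v_i}$ and $T_{v_j}$: concretely, in the ancestor case $R$ is the complement of the larger of the two subtrees, and in the incomparable case $R$ is the complement of their disjoint union. Under our assumption, both endpoints of $a_2$ (namely $v_i$ and its tail) lie in $T_{v_i}$, and both endpoints of $b_2$ lie in $T_{v_j}$; hence neither new edge has an endpoint in $R$. Therefore $R$ remains an isolated component of $T' = T \setminus \{a_1, b_1\} \cup \{a_2, b_2\}$, and since $v_i \notin R$, $T'$ has at least two components, contradicting the hypothesis that $T'$ is a spanning tree. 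The only delicate point is organizing the case analysis for $R$ cleanly; notably, the argument uses only the rooted-tree structure of $T$ and does not invoke the planarity of $G$ at all.
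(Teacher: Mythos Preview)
Your proof is correct and takes a genuinely different route from the paper's. The paper first disposes of the case $\vec{x}\preceq\vec{x'}$ (or $\vec{x'}\preceq\vec{x}$) using Proposition~\ref{Prop:partial}, and in the remaining case assumes the subgraph $K=(x_1',x_2,\dots,x_k)$ is not a tree, extracts the unique cycle $C\subset K$, and uses the \emph{planar} structure (the orientation of $C$ and which side $e_1$ lies on) to argue that the other candidate $(x_1,x_2',x_3,\dots,x_k)$ must be connected. Your argument, by contrast, is a pure rooted-tree exchange: you observe that if neither candidate is a tree then each replacement edge has both endpoints inside the corresponding subtree $T_{v_i}$ or $T_{v_j}$, so the root component $R$ of $T\setminus\{a_1,b_1\}$ is untouched by $a_2,b_2$, forcing $T'$ to be disconnected. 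This is shorter, avoids Proposition~\ref{Prop:partial} entirely, and --- as you note --- does not use planarity at all, so it actually proves the lemma for arbitrary directed graphs with the in-degree-one spanning tree structure. The paper's approach, on the other hand, is more in keeping with the clock-move and cycle-orientation machinery that drives the rest of Section~\ref{section:clock}, and the same cycle argument is recycled verbatim for the $m$-edge Lemma~\ref{lemma:m-edgediff}. (Your method also extends to that lemma: if every single swap to the $T'$-value failed, then $R\subseteq\bigcap_i(T\setminus T_{v_i})$ would again be an isolated component of $T'$.)
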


\begin{proof}

Without loss of generality, assume $\vec{x}=(x_1, x_2, \cdots, x_k)$ represents $T$ and $\vec{x'}=(x_1', x_2', \cdots, x_k')$ represents $T'$, where $x_1\neq x_1'$, $x_2\neq x_2'$
and $x_i=x_i'$ for $i=3, 4, \cdots, k$.  In the event that both $x_1<x_1'$ and $x_2<x_2'$ are true (or false), then $\vec{x}\preceq \vec{x'}$ (or $\vec{x'}\preceq \vec{x}$).  Proposition \ref{Prop:partial} then provides the desired spanning tree $H$, represented by $(x_1', x_2, \cdots, x_k)$. 

It thus suffices to consider the case where $x_1>x_1'$ and $x_2<x_2'$. If the subgraph $K$, defined by $(x_1', x_2, \cdots, x_k)$, is a spanning tree, then it will be the desired one, differing from both $T$ and $T'$ by a single edge. Therefore, we assume that $K$ is not a spanning tree and contains a cycle $C$. Let $e_i$ (resp. $e_i'$) be the $x_i$-th (resp. $x_i'$-th) edge pointing at $v_i$. We have $e_1', e_2 \subset C$ since both $(x_1, x_2, \cdots, x_k)$ and $(x_1', x_2', \cdots, x_k')$ induce spanning trees. Since $e_1'$ is obtained from a clockwise clock move on $e_1$, and $e_1$ lies outside the region bounded by $C$ (otherwise $T$ contains a cycle by the claim in the proof of the previous proposition), we see that $C$ must be counter-clockwise (See Figure \ref{fig:twodifference}). We then proceed to prove that $(x_1, x_2', x_3, \cdots, x_k)$ gives the desired spanning tree.  

\begin{figure}[h!] 
    \centering
    \includegraphics[width=8cm]{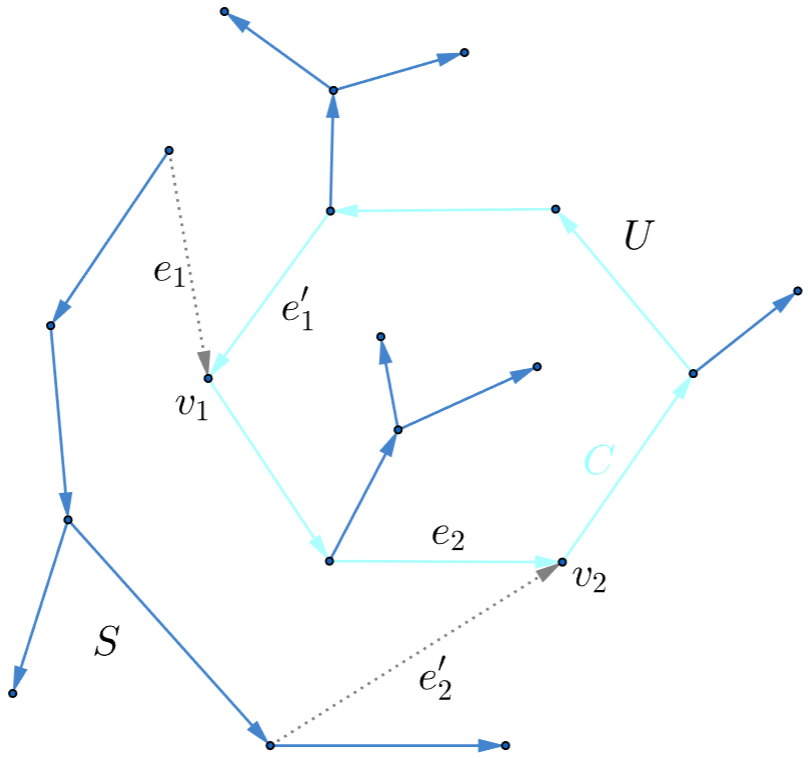}
    \caption{If the graph $K$ represented by $(x_1', x_2, \cdots, x_k)$ contains a cycle, then the graph $H=K\cup\{e_1, e_2'\}\backslash \{e_1', e_2\}$ represented by $(x_1, x_2', \cdots, x_k)$ is a spanning tree. Light blue emphasizes the cycle C. }
    \label{fig:twodifference}
\end{figure}

By Proposition \ref{proposition:equivalenceofspanning}, $K$ is disconnected. Moreover, $K$ has two connected components since replacing $e_1'\subset K$ by $e_1$ (which results in $T$) can at most reduce the total number of components by $1$. Let $U$ be the component containing $C$ and let $S$ be the other component. The fact that $T$ is a spanning tree implies that $U$ and $S$ can be connected by $e_1$; the fact that $T'$ is a spanning tree implies that $U$ and $S$ can be connected by $e_2'$. Since $e_1',e_2\subset C\subset U$, we have $U\backslash e_1'$ is still connected and $U\backslash (e_1'\cup e_2)$ has two components. The component containing $v_1$ can be connected with $S$ by joining $e_1$; the other component containing $v_2$ can be connected with $S$ by joining $e_2'$. Thus by replacing $e_1'$ by $e_1$ and $e_2$ by $e_2'$ in $K$, we obtain a connected subgraph, which is a spanning tree by Proposition \ref{proposition:equivalenceofspanning}.
    
\end{proof}

In fact, the idea of the above proof can be applied to a more general situation. 

\begin{lemma} \label{lemma:m-edgediff}
Let $T$ and $T'$ be two spanning trees that differ by $m\geq 2$ edges.  Then there exists a spanning tree $H$ that differs from both $T$ and $T'$ by at most $m-1$ edge.  

\end{lemma}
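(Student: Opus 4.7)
The plan is to prove the stronger statement that there exists a spanning tree $H$ differing from $T$ by exactly one edge and from $T'$ by exactly $m-1$ edges; since $m \geq 2$, both quantities are at most $m-1$. In the lattice-point framework of Section \ref{section:clock}, this amounts to finding an index $j$ in $S := \{i : x_i \neq x_i'\}$ such that the lattice point $\vec{y}^{(j)}$, which agrees with $\vec{x}$ in every coordinate except the $j$-th (where it takes the value $x_j'$), generates a spanning tree. Writing $e_j := e_{j, x_j}$ and $e_j' := e_{j, x_j'}$ for the edges of $T$ and $T'$ entering $v_j$, the subgraph represented by $\vec{y}^{(j)}$ is exactly $T - e_j + e_j'$. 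Since both edges point at $v_j$, the in-degree-one condition at every non-root vertex is preserved, so by Proposition \ref{proposition:equivalenceofspanning} this subgraph is a spanning tree iff it is connected. Removing $e_j$ splits $T$ into two components $A_j \ni r$ and $B_j$, where $B_j$ is the $T$-subtree consisting of $v_j$ and all its $T$-descendants; adding $e_j'$ reconnects them iff the tail $u_j$ of $e_j'$ (equivalently, the $T'$-parent of $v_j$) lies in $A_j$, i.e., is not a $T$-descendant of $v_j$.

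To produce such an index, I argue by contradiction. Suppose that for every $j \in S$, $u_j$ is a $T$-descendant of $v_j$; choose $j^* \in S$ minimizing the $T'$-depth of $v_{j^*}$ (i.e.\ the distance from $r$ in $T'$), and write the $T'$-path from $r$ to $v_{j^*}$ as $r = w_0, w_1, \ldots, w_k = v_{j^*}$. By the minimality of $j^*$, none of $w_1, \ldots, w_{k-1}$ belongs to $S$, so their $T$-parents coincide with their $T'$-parents, and therefore $w_0, w_1, \ldots, w_{k-1}$ is also the unique $T$-path from $r$ to $w_{k-1}$. The hypothesis forces $u_{j^*} = w_{k-1}$ to be a $T$-descendant of $v_{j^*}$, which in turn forces this very $T$-path to pass through $v_{j^*}$---impossible, since $v_{j^*} \in S$ while no $w_i$ with $i < k$ lies in $S$. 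The contradiction yields the desired $j$, and $H := T - e_j + e_j'$ fulfills the claim.

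The main subtlety, and the only step requiring real care, is the selection of $j^*$: the minimality of its $T'$-depth is precisely what guarantees that the $T$- and $T'$-paths from $r$ up to the $T'$-parent of $v_{j^*}$ coincide, which short-circuits the cycle-tracking analysis used in the proof of Lemma \ref{lemma:2edgediff}. In particular, the argument never splits into cases based on the relative orderings of $x_i$ and $x_i'$, and never invokes the planarity of $G$; it produces an explicit $H$ directly from any $T'$-depth-minimal element of $S$.
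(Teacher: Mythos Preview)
Your proof is correct and takes a genuinely different route from the paper's. The paper argues by first attempting the swap at an arbitrary index (say $i=1$): if the resulting subgraph $K=(x_1',x_2,\ldots,x_k)$ happens to be a tree, done; otherwise it analyses the unique cycle $C\subset K$ and its two components, locates an index $i\in S$ with $e_i\subset C$ and $e_i'$ bridging the two components, and then recycles the connectivity argument from Lemma~\ref{lemma:2edgediff} to conclude that $(x_1,\ldots,x_{i-1},x_i',x_{i+1},\ldots,x_k)$ is a tree. Your argument bypasses this ``try-and-repair'' step entirely: the $T'$-depth-minimal choice $j^*\in S$ works outright, because the $T'$-path from $r$ to $w_{\ell-1}$ avoids $S$ and hence coincides with the $T$-path, forcing $u_{j^*}=w_{\ell-1}\in A_{j^*}$. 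This is cleaner, requires no case analysis, makes no reference to cycles or planarity, and in fact proves the matroid-style exchange property directly for these rooted in-degree-one subgraphs. The paper's approach, on the other hand, stays closer to the geometric picture developed in Lemma~\ref{lemma:2edgediff} and keeps the cycle $C$ visible, which is thematically consistent with the rest of Section~\ref{section:clock}.

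One small remark: you use $k$ for the length of the $T'$-path $r=w_0,\ldots,w_k=v_{j^*}$, which collides with the paper's use of $k$ for the number of non-root vertices; renaming it (to $\ell$, say) would avoid confusion. Also, your contradiction is slightly roundabout: once you know $w_0,\ldots,w_{\ell-1}$ is the $T$-path from $r$ to $w_{\ell-1}$ and that $v_{j^*}=w_\ell$ does not appear among $w_0,\ldots,w_{\ell-1}$, you can conclude directly that $u_{j^*}\in A_{j^*}$, so the argument can be phrased constructively rather than by contradiction.
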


\begin{proof}

Without loss of generality, assume $x_i\neq x_i'$ for $i=1,2, \cdots m$. Consider the subgraph generated by $(x_1', x_2, \cdots , x_k)$.  If it is a spanning tree, then that is the desired spanning tree of the lemma.  So let us assume that the subgraph contains a cycle $C$. In accordance with the earlier notations, the components $U$ and $S$, which contain and do not contain $C$, respectively, within the subgraph, will be denoted as such. Since $(x_1',x_2',\cdots, x_m',x_{m+1},\cdots x_k)$ represents a spanning tree, there exists some $e_i\subset C$ for some $2\leq i \leq m$ such that $e_i'$ connects $U$ and $S$. We may assume $i=2$. Then we apply the same arguments from the previous lemma here. Consequently, the subgraph $H$ generated by $(x_1,x_2',x_3,\cdots,x_k)$ is connected, and therefore a spanning tree. It differs from $T$ by one edge and differs from $T'$ by $m-1$ edges.

\end{proof}

\begin{proof}[Proof of Theorem \ref{Thm:Clock}]

Suppose $T$ and $T'$ are two spanning trees that differ by $m$ edges.  We prove by induction on $m$.  For $m=1$, Proposition \ref{Prop:partial} implies that all subgraphs generated by the intermediate lattice points $\vec{y}$ are also spanning trees. Moreover, by definition, the spanning trees generated by neighboring points are related by a clock move.  Now, suppose the statement is proved for all integers less than $m$. By Lemma \ref{lemma:m-edgediff}, there is a spanning tree, denoted $H$, that differs from both $T$ and $T'$ by at most $m-1$ edges.  By the induction hypothesis, a sequence of clock moves can be constructed that relates $H$ and $T$, as well as $H$ and $T'$.  The desired sequence relating $T$ and $T'$ can then be obtained by combining the two. This concludes the induction argument.

\end{proof}

\section{A spanning tree model}

Given a plane graph $G$, recall that each Kauffman state $s$ contributes a monomial $P_s(t)$ to the Alexander polynomial: $$\Delta_G(t)=\sum_{s\in \mathcal{S}(G,\delta)}P_s(t).$$ As there is a bijective map \begin{equation} \label{phi}
   \phi: \mathcal{T}_r(G) \rightarrow S(G, \delta) 
\end{equation}
associating a spanning tree $T$ to the Kauffman state $s$, we readily see that $\Delta_G(1)$ counts the number of spanning trees.  In this section, we will also determine the degree of the monomial $P_s(t)$ from the spanning tree.

Let us first give an explicit description of the bijective map $\phi$. Recall that each plane graph $G$ has a dual graph $G^*$ that has a vertex for each face of $G$ and has an edge $e^*$ connecting the two faces adjacent to $e$.
For every spanning tree $T$ of $G$, it is well known that the complement of the edges dual to $T$ is a spanning tree for $G^*$. This spanning tree is denoted $T^*$ and is referred to as the \textit{dual spanning tree}. 
Note that if the vertex dual to unbounded region in $G$ is fixed to be the root $r^*$ of $G^*$, then there is a unique induced orientation on $T^*$ so that all vertices except the root $r^*$ has in-degree 1. The Kauffman state $s$ is then constructed as follows:

\begin{enumerate}
\item For the edge $e_0$ where the base point  $\delta$ is located, assign the crossing $cr(e_0)$ to its north corner inside the circle region around the vertex $r$.
\item For each edge $e$ in the oriented spanning tree $T$ that enters the vertex $v$, assign the crossing $cr(e)$ to its north corner inside the circle region around the vertex $v$.
\item For all other crossings, there is a unique way of assigning one of the east and west corners: Starting from the root $r*$, one can travel to all other vertices in $G^*$ along edges of $T^*$.  In each step that we traverse $e$ on the dual edge $e^*$ from $v_1^*$ and $v_2^*$, assign $cr(e)$ to the corner that belongs to the regular region dual to $v_2^*$. See Figure \ref{fig:phimap}.


\end{enumerate}


\begin{figure}[h!]
    \centering
    \includegraphics[width=13cm]{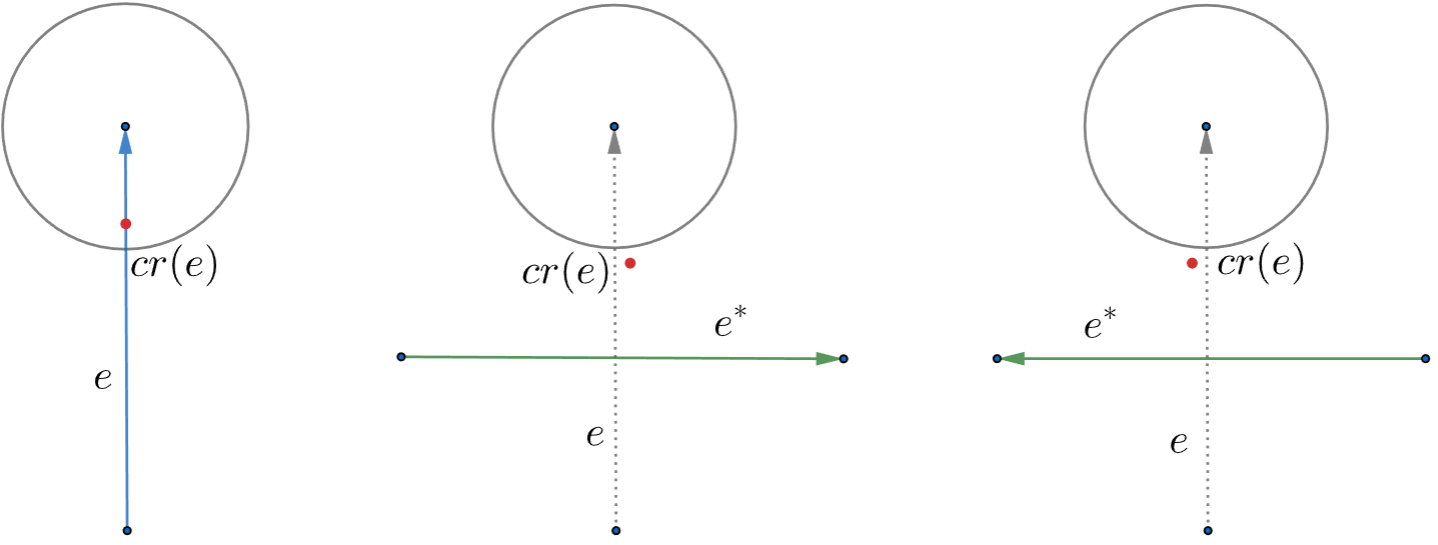}
    \includegraphics[width=14cm]{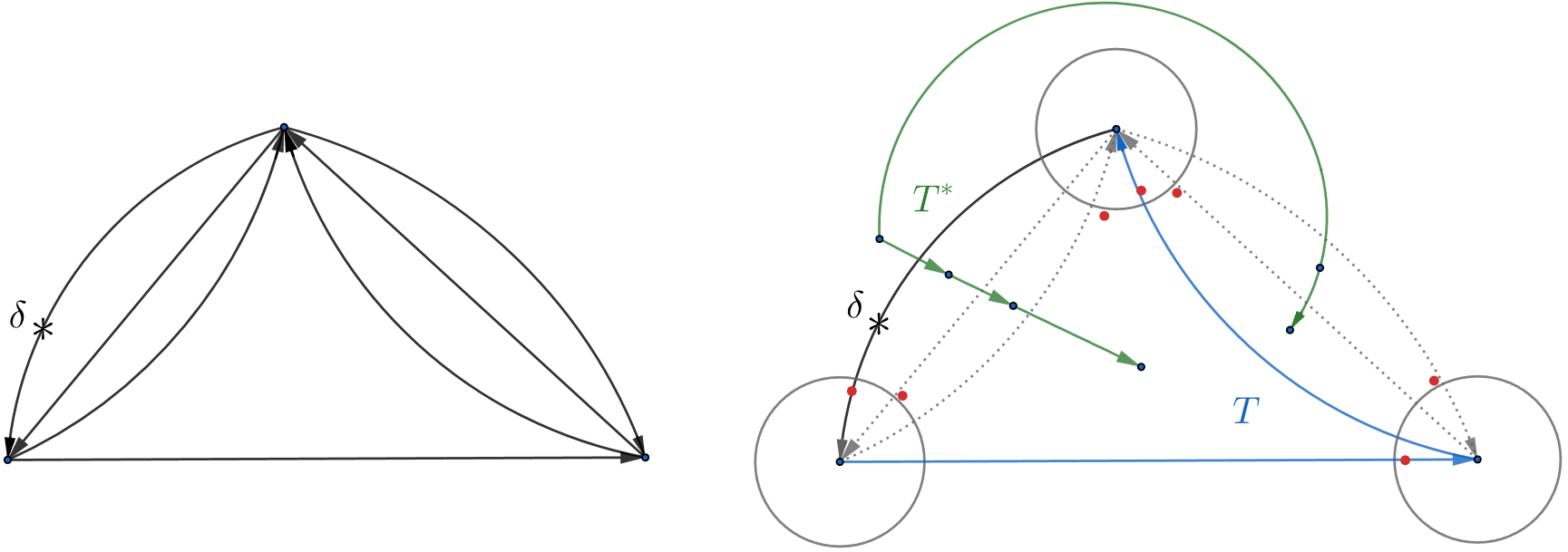}
    \caption{As a convention of this paper, blue edges denote the spanning tree, dotted edges the complement of the spanning tree, and green edges the dual tree. A red dot at a corner of each crossing represents a Kauffman state. Top left: $e$ is an edge of the spanning tree if and only if $cr(e)$ is assigned to its north corner. Top middle: $e^*$ passes $e$ from left to right if and only if $cr(e)$ is assigned to its east corner. Top right: $e^*$ passes $e$ from right to left if and only if $cr(e)$ is assigned to its east corner.
    Bottom: A concrete example of the correspondence between a spanning tree and a Kauffman state.}
    \label{fig:phimap}
    \end{figure}

Our main theorem is the following spanning tree model for the Alexander polynomial.

\begin{theorem} [Spanning Tree Model]
\label{Thm:spanningtreemodel}
Suppose $G$ is a balanced transverse plane graph, and $X$ is the set of lattice points $\vec{x}$ that represent rooted spanning trees.  Then, we have 

\begin{equation}\label{eq:spanning}
\Delta_G(t)\doteq\sum _{\vec{x}\in X}  t^{|\vec{x}|}
\end{equation}

\end{theorem}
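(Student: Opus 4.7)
The plan is to combine the bijection $\phi$ with the Clock Theorem (Theorem~\ref{Thm:Clock}). First, using $\phi$ to rewrite $\Delta_G(t) = \sum_{T\in\mathcal{T}_r(G)} P_{\phi(T)}(t)$, and noting that since $G$ has trivial coloring each local contribution $P^{\triangle}_{C_p}(t)$ is a monomial (namely $1$ for a north corner, $t^{1/2}$ for an east corner, and $t^{-1/2}$ for a west corner), each $P_{\phi(T)}(t) = t^{d(T)}$ is itself a monomial. The identity $\Delta_G(t)\doteq \sum_{\vec{x}\in X} t^{|\vec{x}|}$ thus reduces to showing that $d(T) - |\vec{x}(T)|$ is constant across all spanning trees. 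By Theorem~\ref{Thm:Clock}, any two spanning trees are connected by a sequence of clock moves, so it suffices to verify: for every counter-clockwise clock move $T\to T'$ (with $|\vec{x}'| - |\vec{x}| = 1$), one has $d(T') - d(T) = 1$.

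Suppose the clock move takes place at a vertex $v_i$, replacing $e = e_{i,x_i}\in T$ with $e' = e_{i,x_i+1}\in T'$. Let $F$ denote the face of $G$ locally bounded at $v_i$ by the counter-clockwise wedge between $e$ and $e'$. By the transverse condition and the counter-clockwise ordering convention, $F$ coincides with the east corner of $cr(e)$ and the west corner of $cr(e')$. The central claim to prove is that $\phi(T)$ and $\phi(T')$ differ only at $cr(e)$ and $cr(e')$, with the assignments
\[
\phi(T):\; cr(e)\mapsto N,\; cr(e')\mapsto F \text{ (west)}; \qquad \phi(T'):\; cr(e)\mapsto F \text{ (east)},\; cr(e')\mapsto N.
\]
Granting this claim, the degree change is exactly $d(T') - d(T) = \tfrac{1}{2} + \tfrac{1}{2} = 1$, as required.

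To justify the claim, I would examine the dual spanning trees $T^*$ and $T'^*$, which are related by the single swap $e'^*\leftrightarrow e^*$ (both dual edges being incident to the dual vertex $F^*$). The key geometric observation is that in $T^*$ rooted at $r^*$ (the unbounded face), $F^*$ is a child of $F_R^*$ via $e'^*$, where $F_R^*$ denotes the dual vertex on the opposite side of $e'$ from $F$. Consequently, passing to $T'^*$ simply reattaches the subtree of $T^*$ rooted at $F^*$ via $e^*$ to $F_L^*$, and all parent-child orientations within that subtree are preserved; it follows that the east/west assignment of every other non-tree crossing remains unchanged between $\phi(T)$ and $\phi(T')$.

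The main obstacle will be rigorously establishing the geometric claim that $F^*$ is always a child (not a parent) of $F_R^*$ in $T^*$. This is straightforward when $F$ is a bigon bounded only by $e$ and $e'$: then $F^*$ has only the two incident dual edges $e^*$ and $e'^*$, forcing it to be a leaf of $T^*$ attached to $F_R^*$. In the more general situation where $F$ has additional boundary edges in $G$, a more delicate argument using planarity, the role of $r^*$ as the outermost face, and the balancedness of $G$ (which implies that any cut separates equal numbers of edges in each direction) should be needed to handle the subtle cases.
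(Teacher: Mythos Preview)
Your overall strategy---reduce to a single clock move via the Clock Theorem and show the degree changes by $1$---is exactly the paper's approach. However, your central claim, that $\phi(T)$ and $\phi(T')$ differ only at $cr(e)$ and $cr(e')$, is \emph{false} in general, and this is not a matter of a ``more delicate argument'': the geometric fact you isolate as the main obstacle (``$F^*$ is a child of $F_R^*$ in $T^*$'') simply does not hold for all clock moves.

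Here is why. The primal cycle $C\subset T\cup\{e'\}$ passes through $v_i$ along $e$ and $e'$. The paper distinguishes two cases. In the \emph{local} case the interior of $C$ contains no other edges incident to $v_i$, so the wedge face $F$ lies inside $C$; since $r^*$ is the unbounded face (outside $C$) and the only dual edge of $T^*$ crossing $C$ is $e'^*$, the path $r^*\to F^*$ in $T^*$ must end with $e'^*$, and your argument goes through. In the \emph{global} case the exterior of $C$ contains no other $v_i$-edges, so $F$ lies \emph{outside} $C$ while $F_R$ and $F_L$ lie inside. Now the unique $T^*$-path from $r^*$ into the interior of $C$ crosses $C$ via $e'^*$, so $F_R^*$ is a \emph{child} of $F^*$, not the other way around. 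Removing $e'^*$ disconnects the entire subtree inside $C$, and reattaching via $e^*$ (whose other endpoint $F_L^*$ is also inside $C$) reverses all parent--child orientations along $C^*\setminus\{e^*,e'^*\}$. Consequently every crossing generated by an edge in $\mathcal{E}=\{e\in E(G): e\text{ meets }C^*\}$ changes its corner assignment, not just two of them.

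The paper's remedy for the global case is precisely the balancedness observation you mention in your last sentence, but used differently: rather than salvaging the two-crossing claim, one shows directly that among the edges in $\mathcal{E}$, exactly half point into the region bounded by $C^*$ and half point out (a discrete divergence theorem, using that in-degree equals out-degree at every vertex). The east/west swaps therefore cancel in pairs except for the contributions at $cr(e)$ and $cr(e')$, yielding the net degree change of $1$. So your instinct that balancedness enters is correct, but it enters to \emph{replace} the two-crossing claim in the global case, not to prove it.
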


Since two neighboring points $\vec{x}$ and $\vec{x'}$ satisfy $|\vec{x}|=|\vec{x'}|\pm 1$, and the Clock Theorem \ref{Thm:Clock} implies that all spanning trees are related by clock moves, Theorem \ref{Thm:strictpositivity} follows as an immediate consequence.  

The proof of Theorem \ref{Thm:spanningtreemodel} is based on the following lemma. 

\begin{lemma} \label{Lemma:degreedifference}

Suppose $\vec{x}$ and $\vec{x'}$ are two lattice points that represent the spanning trees $T$ and $T'$ in $G$, and they further correspond to the Kauffman states $s$ and $s'$ under the canonical bijection $\phi: \mathcal{T}_r(G) \rightarrow S(G, \delta)$.  Then, the difference in the degree of the induced monomials in $t$ is given by:
$$\deg P_s(t)-\deg P_{s'}(t)=|\vec{x}|-|\vec{x'}|$$
\end{lemma}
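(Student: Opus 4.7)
The plan is to use the Clock Theorem to reduce to a single clock move and then perform a careful local analysis of the induced Kauffman state change.

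First, by Theorem~\ref{Thm:Clock}, any two spanning trees are connected by a sequence of clock moves, and both sides of the claimed identity are additive along such a sequence. It therefore suffices to verify the equality when $T$ and $T'$ differ by exactly one clock move. Without loss of generality take this move to be counter-clockwise at some vertex $v_i$, replacing $e = e_{i,x_i}$ by $e' = e_{i,x_i+1}$; then $|\vec{x'}| - |\vec{x}| = 1$ and the target identity becomes $P_{s'}(t)/P_s(t) = t$.

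Next I would analyze the local geometry at $v_i$. Since $e$ and $e'$ are consecutive incoming edges in the counter-clockwise order, they share a common face $R$ of $G$ --- the angular wedge between them at $v_i$. Write $R_e$ and $R_{e'}$ for the other face of $e$ and of $e'$ respectively. Unpacking the east/west convention (right/left of the edge traversed tail-to-head) shows that the east corner of $cr(e)$ and the west corner of $cr(e')$ both lie in $R$, while the west corner of $cr(e)$ lies in $R_e$ and the east corner of $cr(e')$ lies in $R_{e'}$. The dual spanning trees are related by $T'^{*} = T^{*} - e'^{*} + e^{*}$, and crucially both $e^{*}$ and $e'^{*}$ are incident to $R$ in $G^{*}$ --- they share this endpoint.

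In the generic situation, where $R$ lies in the component of $T^{*} - e'^{*}$ that does not contain the root $r^{*}$, the shared endpoint $R$ forces the induced rooted orientations on every other dual tree edge to agree in $T^{*}$ and $T'^{*}$. The Kauffman states $s$ and $s'$ therefore coincide on all crossings except $cr(e)$ and $cr(e')$: in $s$ these contribute $(1, t^{-1/2})$ (north and west), while in $s'$ they contribute $(t^{1/2}, 1)$ (east and north), yielding the desired ratio $P_{s'}(t)/P_s(t) = t^{1/2}/t^{-1/2} = t$.

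The main obstacle is the subtle case where $R$ lies in the root component of $T^{*} - e'^{*}$ (in particular when $R = r^{*}$). Here the dual-tree entry point into the far component shifts from $R_{e'}$ to $R_e$, so the orientations of all dual edges on the path $P$ joining these two vertices get reversed, and the east/west assignments flip at the corresponding crossings $cr(f)$. A direct calculation at $cr(e), cr(e')$ alone now gives the ``wrong'' ratio $t^{-1}$, so the path flips must contribute an extra factor of $t^{+2}$. I expect this to follow from a signed edge-count identity on the primal cocycle dual to $\{e^{*}, e'^{*}\} \cup P$: by the balanced transverse hypothesis the net flux across this cut is zero, so if $n_{\pm}$ count the path edges going in either direction across the cut, then $n_{-} - n_{+} = 2$ after accounting for the contributions of $e$ and $e'$; matching this signed count against the $\eta = \pm 1$ sign of each flipped east/west assignment produces exactly the required $t^{+2}$.
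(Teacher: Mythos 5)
Your proposal is correct and follows essentially the same route as the paper: reduce to a single clock move via Theorem \ref{Thm:Clock}, then split into the two cases that the paper calls local and global (your dichotomy on whether the wedge face $R$ lies in the root component of the dual forest is equivalent to theirs), handling the local case by inspecting the two changed crossings and the global case by a zero-net-flux argument across the cut dual to $C^*$, which is exactly the paper's ``discrete divergence theorem'' step. The only difference is presentational: the paper counts east/west corners over all of $\mathcal{E}$ at once, while you isolate the $t^{-1}$ from $cr(e),cr(e')$ and recover the compensating $t^{+2}$ from the reversed dual path, but the underlying balanced-coloring identity is the same.
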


By the Clock Theorem \ref{Thm:Clock}, it suffices to prove the lemma for two neighboring lattice points $\vec{x}$ and $\vec{x'}$.  Assume without loss of generality that $x_1=x_1'+1$ and $x_i=x_i'$ for $i=2, \cdots k$, so the two spanning trees $T$ and $T'$ differ by a single edge entering the vertex $v_1$.  Denote these two edges $e_1$ and $e_1'$, and they are the $x_1$-th and $x_1'$-th edges towards $v_1$, respectively.  

Note that $T\cup \{e_1'\}=T'\cup \{e_1\}$ is a subgraph with $k+1$ edges, and hence it contains a unique (unoriented) cycle $C$ which must include $e_1$ and $e_1'$ as consecutive edges.  Topologically, $C$ is a simple closed curve, so it divides the plane into an interior bounded region and an exterior unbounded region.  We then classify all clock moves into one of the following two types.  

\begin{definition} A clock move is called

\begin{enumerate}
    \item \textit{Local/Interior}: if the interior region bounded by $C$ does not contain any other edges incident to $v_1$ other than $e_1$ and $e_1'$. See Figure \ref{fig:localmove}.\\

    \item \textit{Global/Exterior}: if the exterior unbounded region does not contain any other edges incident to $v_1$ other than $e_1$ and $e_1'$. See Figure \ref{fig:globalm}.
    
\end{enumerate}
\end{definition}
\begin{figure}[!h]
    \centering
    \includegraphics[width=9cm]{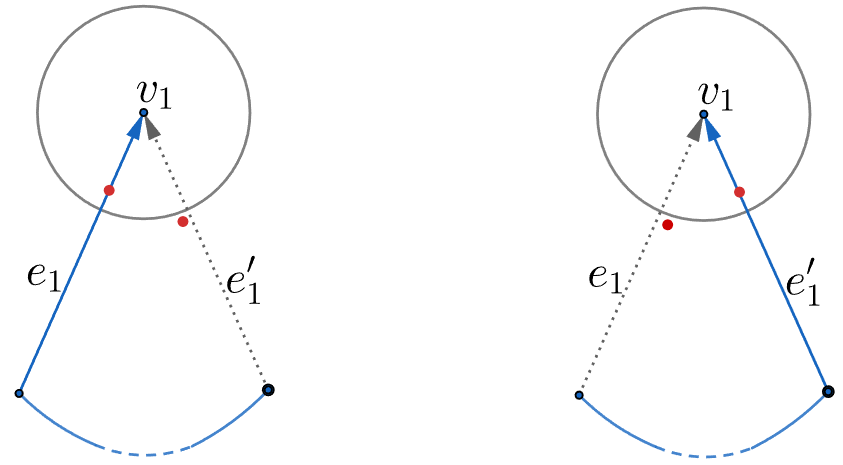}
    \caption{A local move turning $e_1$ to $e_1'$.  The corresponding Kauffman states $s$ and $s'$ are represented by red dots and are identical everywhere except at the two crossings generated by $e_1$ and $e_1'$.}
    \label{fig:localmove}
\end{figure}

\begin{figure}[!h]
    \centering
    \includegraphics[width=10cm]{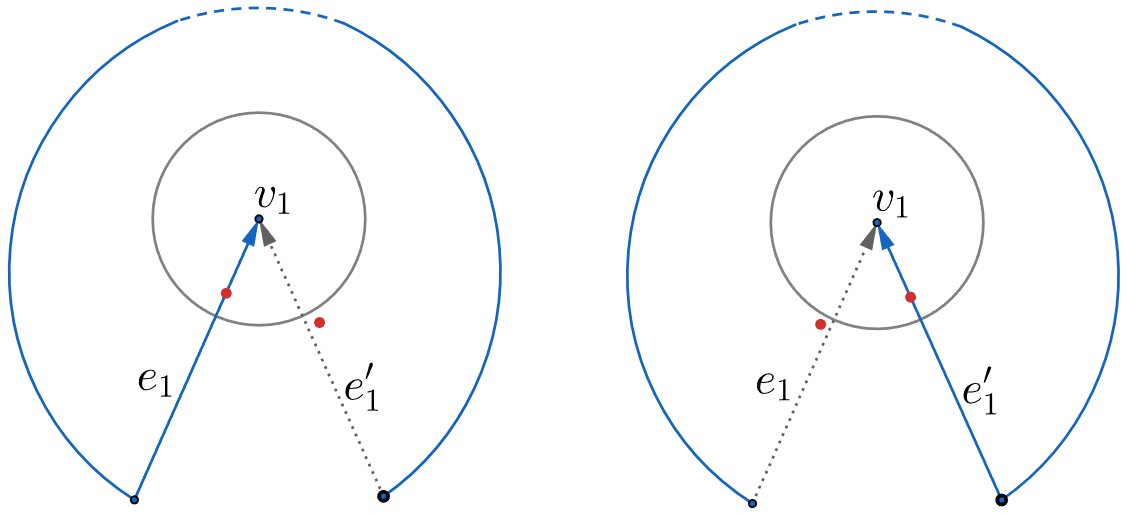}
    \caption{A global move turning $e_1$ to $e_1'$. The corresponding Kauffman states $s$ and $s'$ differ not only at the two crossings generated by $e_1$ and $e_1'$ but also elsewhere (not shown in the picture).}
    \label{fig:globalm}
\end{figure}

Although the two types of clock moves have seemingly symmetric definitions, a local move is, in fact, much simpler in many respects than a global move.  The contrast between the two types will be made apparent when we examine the corresponding clock moves on Kauffman states under the bijective map $\phi: \mathcal{T}_r(G) \rightarrow S(G, \delta)$ described earlier.

First, the local clock moves on Kauffman states can be defined in a straightforward manner. 

\begin{definition}
Two Kauffman states $s$ and $s'$ are related by a local clock move if they differ only at the two crossings generated by the neighboring edges $e_1$ and $e_1'$ as depicted in Figure \ref{fig:localmove}.

\end{definition}

In contrast, a global clock move involves a change of assignment at crossings on a large scale. Note that the dual spanning tree $T^*$ and $T'^*$ also differs by a single edge, so their union also contains a unique cycle, denoted $C^*$.  Let $\mathcal{E}$ be the set of edges in the original graph $G$ that intersects $C^*$.

\begin{definition}
Two Kauffman states $s$ and $s'$ are related by a global clock move if they have different corner assignments at all crossings generated by edges in $\mathcal{E}$ as depicted in Figure \ref{fig:divergence}. We also refer to Figure \ref{fig:globalmove} for a concrete example.
    
\end{definition}

\begin{figure}[!h]
    \centering
    \includegraphics[width=14cm]{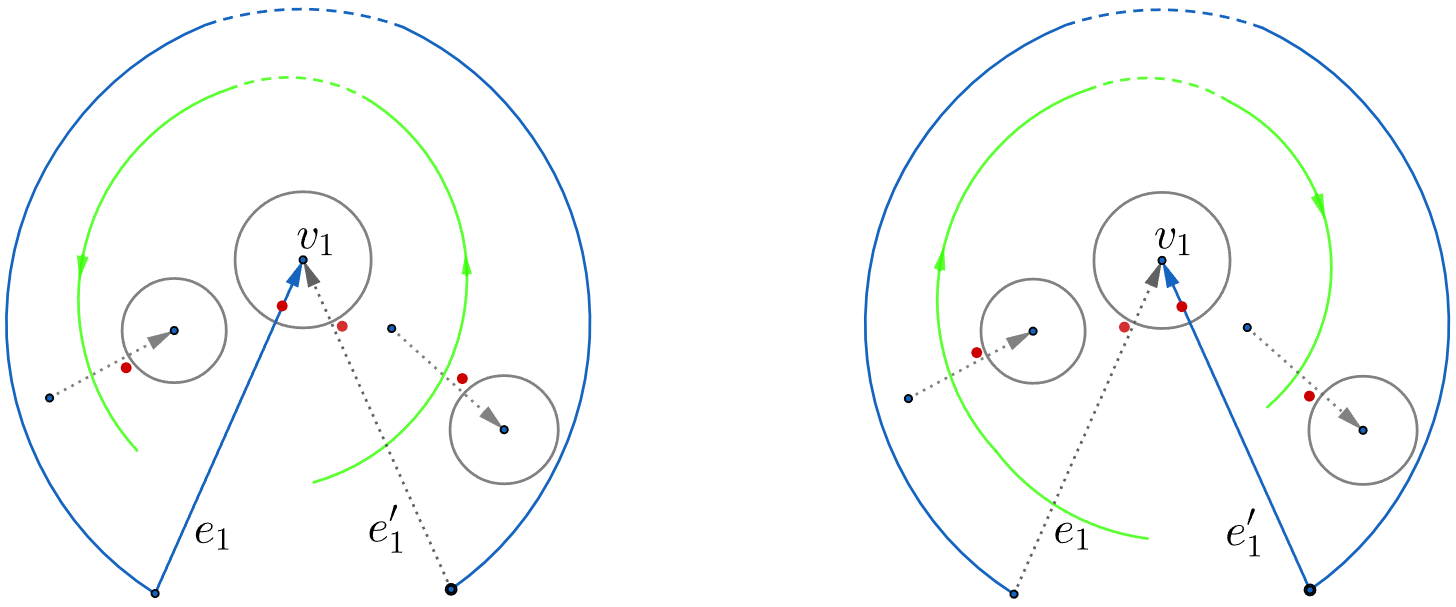}
    \caption{The global move turning $e_1$ to $e_1'$ is counter-clockwise. The light green curve on the left is $C^*\backslash e_1^*$ and that on the right is $C^*\backslash e_1'^*$. If an edge points in (resp. out of) the region bounded by $C^*$, then the generated crossing is assigned to its east (resp. west) corner. 
    }
    \label{fig:divergence}
\end{figure}

\begin{proof}[Proof of Lemma \ref{Lemma:degreedifference}]\label{proof:degreedifference}

As explained, it suffices to prove the lemma for two neighboring lattice points $\vec{x}$ and $\vec{x'}$ with $x_1=x_1'+1$. The clock move is either local or global. 

In the case of a local move, the Kauffman state $s$ has one more east corner at $e_1'$ relative to $s'$, whereas $s'$ has one more west corner at $e_1$ relative to $s$. As the east corner contributes a factor $t^{\frac{1}{2}}$ while a west corner contributes $t^{-1/2}$, we readily see that $\deg P_s(t)-\deg P_{s'}(t)=1=|\vec{x}|-|\vec{x'}|  $.

In the case of a global move, the idea of the proof is similar: it will be demonstrated that the overall number of east corners (resp. west corners) of the Kauffman state $s$ is one more (resp. one less) than that of the Kauffman state $s'$. This is evidently equivalent to the claim that among the set of edges $\mathcal{E}$ intersecting $C^*$, exactly half of the edges leave the region bounded by $C^*$, while the remaining half of the edges enter that region. See Figure \ref{fig:divergence}.

In essence, the above statement can be regarded as a discrete version of the \textit{divergence theorem} in calculus: given that the in-degree and out-degree of each vertex are equal for our graph $G$, it is reasonable to expect a balance in the total inflow and outflow with respect to a closed curve $C^*$. More concretely, note that the total in-degree and out-degree of all vertices inside $C^*$ are equal.  When the total number of edges inside $C^*$ is subtracted from the total in-degree and out-degree, the resulting value represents the number of edges entering or leaving the interior region, respectively, and they are equal.     

\end{proof}

\begin{proof}[Proof of Theorem \ref{Thm:spanningtreemodel}]
Recall that $\Delta_G(t)=\sum_{s\in \mathcal{S}(G,\delta)}P_s(t)$.  Under the bijective map $\phi^{-1}: S(G, \delta)\rightarrow \mathcal{T}_r(G) $ and the representation of spanning trees as lattice points (Definition \ref{Def:lattice}), we have 
$$\sum_{s\in \mathcal{S}(G,\delta)}P_s(t)\doteq\sum _{\vec{x}\in X}  t^{|\vec{x}|}$$ since each term $P_s(t)$ has relative degree $|\vec{x}|$ by Lemma \ref{Lemma:degreedifference}.

\end{proof}


\medskip

In general, we remark that the spanning tree model can also be used to study the Alexander polynomial of a plane MOY graph $(G,c)$ with a potentially non-trivial coloring $c$. To this end, we can simply consider the graph $G'$ obtained from parallel edge replacements on all edges of $G$, and Theorem \ref{Thm:parallelinvariant} implies that $\Delta_G(t)=\Delta_{G'}(t)$.

\begin{corollary}
    For all $n\in\mathbb{Z}$ with $n\geq 2$, $n$ cannot be realized as the Alexander polynomial of any plane MOY graphs.
\end{corollary}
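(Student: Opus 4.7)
The plan is to combine the spanning tree model with the Clock Theorem to derive a contradiction in one stroke. Suppose, for the sake of contradiction, that there exists a plane MOY graph $G$ with $\Delta_G(t) \doteq n$ for some integer $n \geq 2$. By Theorem \ref{Thm:parallelinvariant}, I may first replace $G$ by the balanced transverse plane graph $G'$ obtained through parallel edge replacements; this does not alter the Alexander polynomial, so without loss of generality $G$ itself is balanced transverse.

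Next I would apply the spanning tree model (Theorem \ref{Thm:spanningtreemodel}) to write $\Delta_G(t) \doteq \sum_{\vec{x}\in X} t^{|\vec{x}|}$, where $X$ is the set of lattice points representing rooted spanning trees of $G$. The assumption $\Delta_G(t) \doteq n$ says precisely that this sum reduces to a monomial $n \cdot t^m$, which forces two things: $|X| = n \geq 2$, and every $\vec{x} \in X$ has the same norm $m$. In particular, $G$ admits at least two distinct rooted spanning trees, say $T$ and $T'$, both represented by lattice points of norm $m$.

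Finally I invoke the Clock Theorem (Theorem \ref{Thm:Clock}) to produce a sequence of clock moves $T = T_0, T_1, \ldots, T_\ell = T'$ with $\ell \geq 1$ connecting $T$ to $T'$, where each $T_i$ is itself a spanning tree. The lattice points representing the consecutive pair $T_0$ and $T_1$ are neighboring by the definition of a clock move, so $\bigl||T_1| - |T_0|\bigr| = 1$. This contradicts the conclusion of the previous paragraph that every spanning tree of $G$ has norm exactly $m$. Hence no plane MOY graph with $\Delta_G(t) \doteq n \geq 2$ can exist. There is no real obstacle in this argument; the only point requiring care is the translation step that $\Delta_G(t) \doteq n$ forces all of $X$ to share a common norm, which is immediate from the monomial form of the right-hand side of the spanning tree formula.
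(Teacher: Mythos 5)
Your proposal is correct and follows essentially the same route as the paper: reduce to trivial coloring via parallel edge replacements, use the spanning tree model to conclude all spanning trees share a common norm while there are $n\geq 2$ of them, and derive a contradiction from the Clock Theorem since a clock move changes the norm by exactly $1$. The only cosmetic difference is that you read off the number of spanning trees directly from the monomial form of the state sum, whereas the paper cites the spanning tree count $\Delta_G(1)=n$ from an earlier reference; both are immediate.
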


\begin{proof}
    Suppose there is such a graph $G$ with $\Delta_G(t)\doteq n$. By Theorem \ref{Thm:parallelinvariant}, we may assume that $G$ is equipped with a trivial coloring. By \cite[Theorem 1.3]{BWSpanning}, $G$ has $\Delta_G(1)=n$ spanning trees. Furthermore by Theorem \ref{Thm:spanningtreemodel}, all the spanning trees have the same degree/norm. On the other hand, Clock Theorem (Theorem \ref{Thm:Clock}) states that any two spanning trees can be connected by a sequence of clock moves. In particular, since $n\geq 2$, there must exist a clock move between some two spanning trees. Lemma \ref{Lemma:degreedifference} implies that such a clock move changes the degree by $1$, so they should contribute two terms of different degrees in the Alexander polynomial, a contradiction.
\end{proof}

\bigskip

\begin{example}\label{exp:lattice}
In Figure \ref{fig:example}, $G$ is an MOY graph with non-trivial coloring. By theorem \ref{Thm:parallelinvariant}, we may replace all edges of $G$ with color $n$ by a set of $n$ parallel edges and obtain a balanced transverse graph $G'$ with $\Delta_G(t)=\Delta_{G'}(t)$. 

\begin{figure}[!h]
    \centering
    \includegraphics[width=5cm]{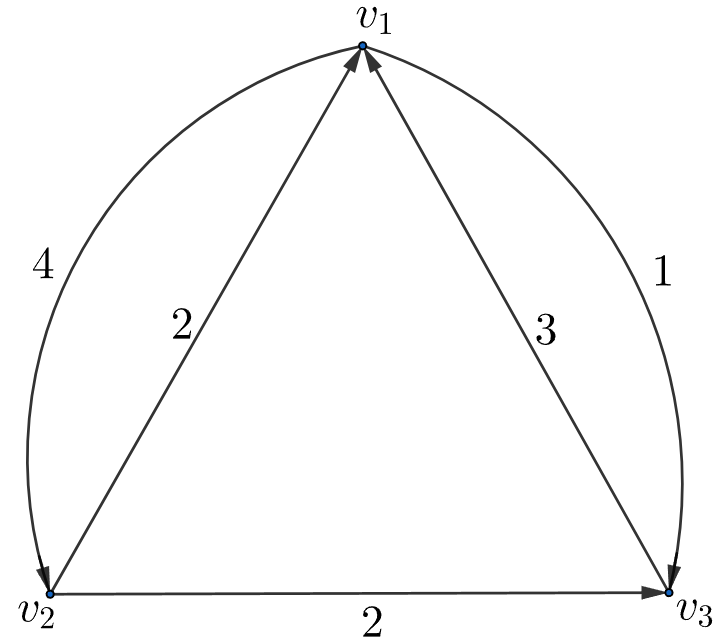}
    \caption{An MOY graph $G$ with non-trivial coloring. The corresponding graph $G'$ is obtained by parallel edge replacements (not shown in the picture). }
\label{fig:example}
\end{figure}

\begin{enumerate}
\item Suppose the base point $\delta$ is placed on the bottom horizontal edge pointing at $v_3$. Then all the spanning trees are rooted at $v_3$. There is only one state hence only one spanning tree for $G$ as shown in Figure \ref{fig:example1}.

\begin{figure}[!h]
    \centering
    \includegraphics[width=6cm]{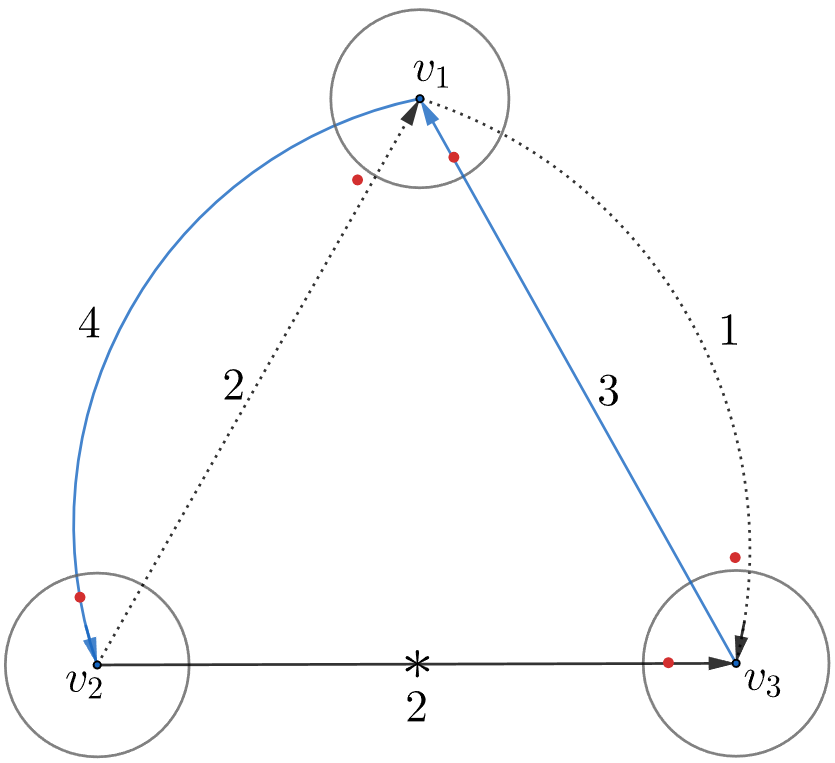}
    \caption{With root $r=v_3$, there is a single spanning tree of weight $3 \times 4= 12$ in $G$. Correspondingly in $G'$, there are $3 \times 4= 12$ spanning trees given by different combinations of parallel edges.   
    }
    \label{fig:example1}
\end{figure}

This corresponds to $3\times 4=12$ spanning trees in $G'$. Specifically in terms of lattice points, for all $1\leq x_1\leq 3$, $1\leq x_2\leq 4$, $(x_1,x_2)$ represents a spanning tree, and these $12$ lattice points have the shape of a $3\times 4$ rectangle (Figure \ref{fig:lattice1}). Note that there always exist local moves between adjacent parallel edges, so the $12$ states (spanning trees) are related by local moves around either $v_1$ or $v_2$.

\begin{figure}[!h]
    \centering
    \includegraphics[width=6cm]{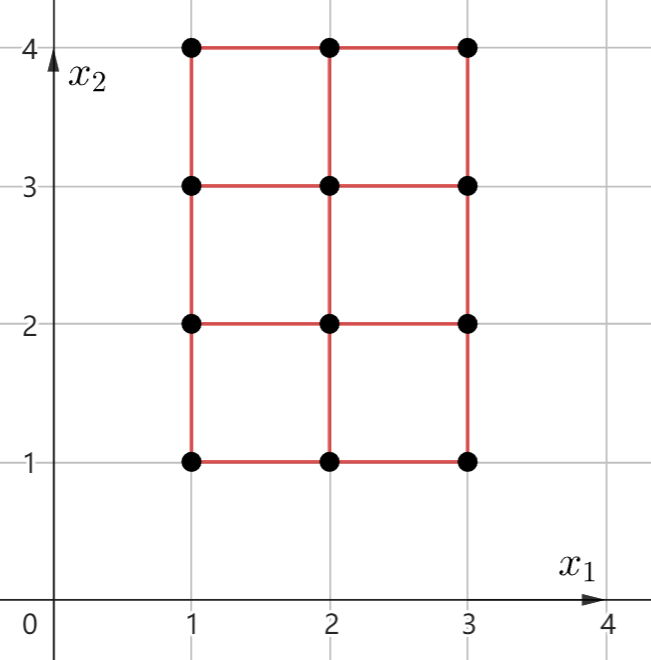}
    \caption{Lattice points representing spanning trees in $G'$ rooted at $r=v_3$. Red edges connect lattice points of spanning trees related by a local move.}
    \label{fig:lattice1}
\end{figure}

\medskip
\item Instead, the base point can be placed on the edge pointing at $v_2$ (Figure \ref{fig:example2}). Then all the spanning trees are rooted at $v_2$. In this case, there are $3$ spanning trees in $G$. 

\begin{figure}[!h]
    \centering
    \includegraphics[width=15cm]{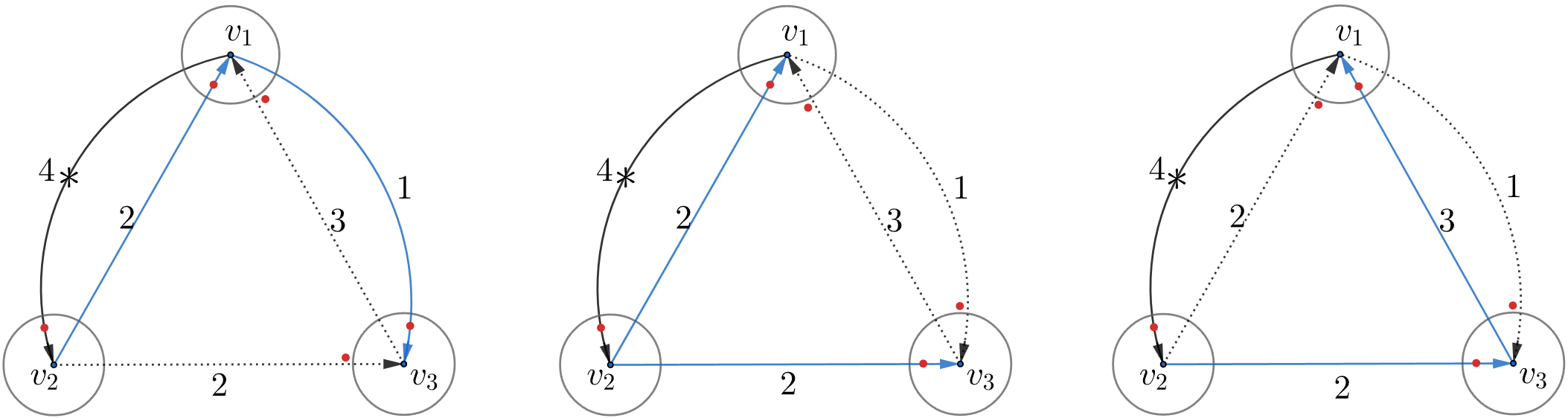}
    \caption{With root $r=v_2$, the total weighted sum of spanning trees in $G$, which equals the total number of spanning tree in $G'$, is $2\times 1+2\times 2 + 2\times 3=12$.}
    \label{fig:example2}
\end{figure}

Here, the first and the second spanning trees are related by a global move, while the second and the third spanning trees are related by a local move. Following the parallel edge replacements, the $3$ spanning trees are respectively associated with $2\times 1=2$, $2\times 2=4$, and $2\times 3=6$ spanning trees in $G'$. If we draw all the lattice points $(x_1, x_3)$ that represent these spanning trees, we see that they are the union of 3 rectangles (Figure \ref{fig:lattice}). Note that all lattice points in any single one of the rectangles are related by local moves. Furthermore, the first and the second rectangles are related by global moves, while the second and third rectangles are related by local moves. 

\begin{figure}[!h]
    \centering
    \includegraphics[width=10cm]{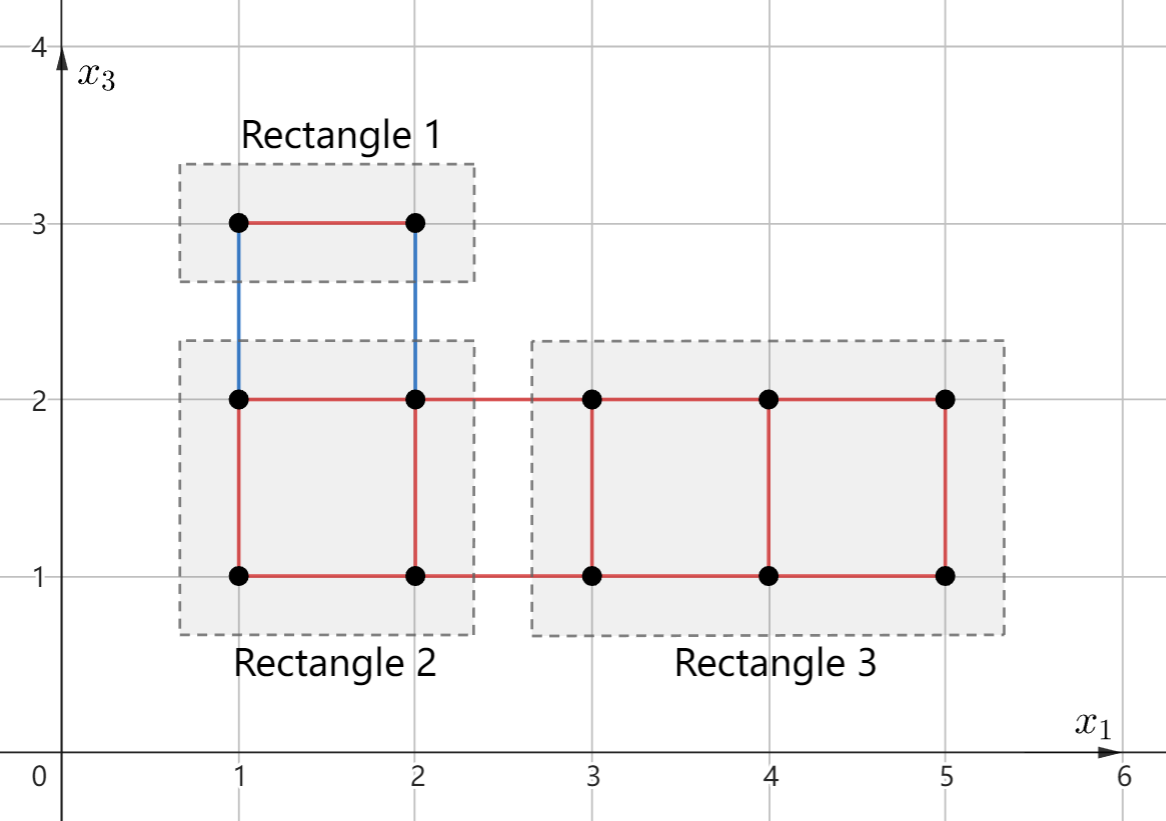}
    \caption{Lattice points representing spanning trees in $G'$ rooted at $r=v_2$. Red edges connect lattice points of spanning trees related by a local move, while blue edges connect those related by a global move.}
    \label{fig:lattice}
\end{figure}

\end{enumerate}

\medskip
From the lattice points for both examples, we see that although the shape of the lattice points representing spanning trees can be very different for different choices of roots, the Alexander polynomial $\Delta_G(t)\doteq t^2+2t^3+3t^4+3t^5+2t^6+t^7$ remains invariant.  

\end{example}

\section{Trapezoidal conjecture for plane graphs}

Given $\Delta_G(t)=\sum _{\vec{x}\in X}  t^{|\vec{x}|}$ by Theorem \ref{Thm:spanningtreemodel}, the trapezoidal conjecture can be equivalently viewed as a desired property on the number of lattice points in $X$ that represent spanning trees with a fixed norm $|\vec{x}|$. 
Our strategy is to decompose $X$ as a disjoint union  $X=\sqcup_\alpha X_\alpha$ so that
\begin{enumerate}
    \item[(i)] Each $X_\alpha$ is a \textit{rectangular} sublattice;
    \item[(ii)] The \textit{average norm} of all points in each  rectangle $X_\alpha$ is independent of $\alpha$.
\end{enumerate}
Since the lattice points in a rectangle satisfy the trapezoidal property, while all rectangles share the same ``symmetric axis'' in the trapezoidal configuration by (ii), it follows that the union must also satisfy this unimodal property. 

\begin{definition}

Two spanning trees $T$ and $T'$ are \textit{locally-clock-equivalent} if they are related by a sequence of local clock moves. In this case, the corresponding lattice points $\vec{x}, \vec{x'} \in X$ are also called locally-clock-equivalent.  

\end{definition}

If we denote $T\sim T'$ for two locally-clock-equivalent spanning trees, then $\sim$ is an equivalent relation. Similar things can be said for $\vec{x}\sim \vec{x'}$, which also defines an equivalent relation in $X$.  As a result, we can decompose $X=\sqcup_\alpha X_\alpha$ where each $X_\alpha$ is a locally-clock-equivalent class.  Our first observation is that although $X$, the set of lattice points that represent spanning trees, may have strange shapes, each locally-clock-equivalent class $X_\alpha$ will be a rectangle of dimension at most $k$.

\begin{proposition}\label{proposition:rectangledecomposition}
Each locally-clock-equivalent class $X_\alpha$ is a rectangular sublattice. More precisely, there are integers $1\leq m_{\alpha, i}\leq M_{\alpha, i} \leq d_i$ for $i=1,2\cdots k$, such that 
\begin{equation} \label{eq:equivalentclass}
X_\alpha=\{\vec{x} \, | \, m_{\alpha, i} \leq x_i \leq M_{\alpha, i} \}.
\end{equation}

\end{proposition}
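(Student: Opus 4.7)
The plan is to pass to the Kauffman state picture via the bijection $\phi: \mathcal{T}_r(G) \to S(G, \delta)$ of Section 4, and exploit a commutativity property of local moves to identify $X_\alpha$ as a product of intervals. First, I would describe a local move on the state side explicitly: a local clock move at $v_i$ swapping $e_{i, x_i}$ with $e_{i, x_i+1}$ is a transposition in the Kauffman state $s$ between the assignments of the two crossings $cr(e_{i, x_i})$, $cr(e_{i, x_i+1})$ on the one hand, and the two regions (the circle region $R_{v_i}$ around $v_i$ and the face $R$ lying in the wedge between $e_{i, x_i}$ and $e_{i, x_i+1}$) on the other. As an immediate consequence, a forward local move $x_i \to x_i + 1$ is available at $s$ iff $s(cr(e_{i, x_i + 1}))$ is the east corner (the wedge face $R$), and a backward local move is available iff $s(cr(e_{i, x_i - 1}))$ is at the corresponding west corner.

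The main step is to prove that local moves at distinct vertices $v_i$ and $v_j$ commute. If both moves are simultaneously available at $\vec{x}$, the four involved crossings and four involved regions are pairwise distinct: the two circle regions $R_{v_i}, R_{v_j}$ are distinct by construction, the crossings are generated by edges at different vertices, and the two wedge faces cannot coincide, for otherwise the bijective state $s$ would have to assign a single region to two distinct crossings $cr(e_{i, x_i+1})$ and $cr(e_{j, x_j+1})$, a contradiction. Hence the two swaps involve disjoint data, so performing either one leaves the other available; the availability of any local move at $v_i$ thus depends only on the assignments at the crossings $cr(e_{i, j})$ for $1 \leq j \leq d_i$, a datum not affected by local moves at any other vertex.

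Third, for each $\vec{x} \in X_\alpha$ with corresponding state $s$, I would show that the set of values of $x_i$ reachable from $\vec{x}$ by local moves only at $v_i$ is a maximal interval $[a_i, b_i] \ni x_i$, where $b_i - x_i$ is the length of the longest run of east-corner assignments at $cr(e_{i, x_i+1}), cr(e_{i, x_i+2}), \ldots$, and $x_i - a_i$ the analogous longest run of west-corner assignments at $cr(e_{i, x_i-1}), cr(e_{i, x_i-2}), \ldots$. By commutativity, moves at other vertices preserve this east/west pattern at $v_i$; and a direct bookkeeping shows that a $v_i$-move within the interval merely shifts the north label to an adjacent crossing while converting the vacated position to the appropriate west or east corner, preserving $a_i$ and $b_i$ as absolute positions in $\{1, \ldots, d_i\}$. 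Hence $[a_i, b_i]$ is an invariant of the equivalence class $X_\alpha$, and combining the commutativity with this interval structure yields $X_\alpha = \prod_i [a_i, b_i]$: starting from any $\vec{x} \in X_\alpha$ we may independently adjust each coordinate through its invariant interval to reach every lattice point in the product, while conversely every element of $X_\alpha$ has its $i$-th coordinate in $[a_i, b_i]$. Setting $m_{\alpha, i} = a_i$ and $M_{\alpha, i} = b_i$ then gives the rectangular decomposition. The principal obstacle is the commutativity step, which hinges crucially on the bijectivity of Kauffman states to rule out shared regions between two simultaneously-available local moves at distinct vertices.
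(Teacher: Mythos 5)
Your proposal is correct and follows essentially the same route as the paper's proof: both rest on the observation that a local clock move is a purely local transposition of the Kauffman state at crossings generated by edges at the relevant vertex, so local moves at distinct vertices commute and the availability of a move at $v_i$ is unaffected by moves elsewhere; your interval-invariance bookkeeping simply spells out the paper's terse final step from the commuting square to the full rectangle $\prod_i[m_{\alpha,i},M_{\alpha,i}]$. One cosmetic slip: under the paper's conventions (see the proof of Lemma \ref{Lemma:degreedifference}) the counter-clockwise move $x_i\to x_i+1$ requires $cr(e_{i,x_i+1})$ to be at its \emph{west} corner and the clockwise move requires $cr(e_{i,x_i-1})$ at its \emph{east} corner, so your east/west labels are swapped, but since the corner you actually use is the one in the wedge face between the two edges, the argument is unaffected.
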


\begin{proof}

Suppose $\vec{x}, \vec{x'}, \vec{x''} \in X_\alpha$, and $\vec{x'}$ (resp. $\vec{x''}$) is obtained from a local clock move at the index $i$ (resp. $j$) on $\vec{x}$. We claim that an additional local clock move at the index $j$ on $\vec{x'}$ and an additional local clock move at the index $i$ on $\vec{x''}$ may be performed, resulting in the same final point $\vec{x'''}\in X_\alpha$. 

This fact is most easily discernible by examining the respective local clock moves on Kauffman states (Figure \ref{fig:localmove}). Since the local clock moves only affects the corner assignment of the Kauffman state locally at the vertex $v_i$ and $v_j$, respectively, we can simply stack these two moves at an arbitrary order to obtain another Kauffman state.     

Note that $\vec{x}, \vec{x'}, \vec{x''}, \vec{x'''}$ have the shape of a rectangle. This leads to the general fact about $X_\alpha$.  

\end{proof}

It thus makes sense to call a locally-clock-equivalent class $X_\alpha$ a \emph{maximal rectangle}. So far, we have shown that the set of lattice points representing the spanning trees $X$ consists of disjoint maximal rectangles $X_\alpha$'s. By (\ref{eq:equivalentclass}), we see that the average norm of all the points $\vec{x} $ in each $X_\alpha$, denoted by $A(X_\alpha)$, is
$$A(X_\alpha)=\frac{1}{|X_\alpha|}\cdot\sum_{\vec{x}\in X_\alpha} |\vec{x}|=\frac {1}{2}\sum_{i=1}^k
(m_{\alpha, i} +M_{\alpha, i}).$$

Observe that given an arbitrary $\vec{x}\in X_\alpha$, the entire $X_\alpha$ can be determined from $\vec{x}$ provided the following information is given: let $T_{\vec{x}}$ be the spanning tree represented by $\vec{x}$. For each $1\leq i\leq k$, let $L_i(\vec{x})\in \mathbb{Z}_{\geq 0}$ (resp. $R_i(\vec{x})$) be the number of local clockwise moves (resp. local counter-clockwise moves) that can be applied on $T_{\vec{x}}$ at the $i$-th vertex. In other words, $L_i(\vec{x})$ (resp. $R_i(\vec{x})$) is equal to the number of points before (resp. after) $\vec{x}$ along the direction of the $i$-th axis in $X_\alpha$. As $m_{\alpha,i}=x_i-L_i(\vec{x})$ and $M_{\alpha,i}=x_i+R_i(\vec{x})$, we obtain another formula that computes the average norm 

    \begin{equation}\label{eq:average}
    \begin{aligned}
        A(X_\alpha)&=\frac {1}{2}\sum_{i=1}^k
(m_{\alpha, i} +M_{\alpha, i})\\
        &=\sum_{i=1}^k (x_i+\frac{1}{2}\left(R_i(\vec{x})-L_i(\vec{x})\right)).
    \end{aligned}
    \end{equation}

\noindent
Our goal is to show that this value is independent of the choice of maximal rectangles:

\begin{proposition}\label{proposition:sameaveragenorm}
    For any maximal rectangles $X_\alpha$ and $X_\beta$, we have $A(X_\alpha)=A(X_\beta)$.
\end{proposition}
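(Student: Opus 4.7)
The plan is to invoke the Clock Theorem \ref{Thm:Clock} to reduce the problem to invariance of the average norm under a single global clock move, and then to verify this invariance by a vertex-by-vertex analysis. Since local moves preserve $X_\alpha$ (and hence $A(X_\alpha)$) trivially, and the Clock Theorem guarantees that any two spanning trees are connected by a finite sequence of clock moves, it suffices to show $A(X_\alpha)=A(X_\beta)$ whenever there exist neighboring lattice points $\vec{x}\in X_\alpha$ and $\vec{x'}\in X_\beta$ related by a single global clock move. Without loss of generality, assume this move is counter-clockwise at $v_1$, so $x_1'=x_1+1$ and $x_i'=x_i$ for $i\geq 2$; denote the corresponding trees by $T,T'$, and let $C\subset T\cup\{e_1'\}$ be the cycle, whose bounded interior contains every incident edge of $v_1$ other than $e_1,e_1'$ by the global condition.

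Applying formula (\ref{eq:average}) at $\vec{x}$ and $\vec{x'}$, I obtain
$$A(X_\beta)-A(X_\alpha)=1+\frac{1}{2}\sum_{i=1}^{k}\Delta_i,\qquad \Delta_i:=\bigl(R_i(\vec{x'})-L_i(\vec{x'})\bigr)-\bigl(R_i(\vec{x})-L_i(\vec{x})\bigr),$$
so the target identity reduces to $\sum_i\Delta_i=-2$. At $v_1$, globality forces $R_1(\vec{x})=L_1(\vec{x'})=0$, since the immediate counter-clockwise (resp.\ clockwise) rotation from $\vec{x}$ (resp.\ $\vec{x'}$) at $v_1$ is precisely the global move itself and hence exits $X_\alpha$ (resp.\ $X_\beta$); thus $\Delta_1=R_1(\vec{x'})+L_1(\vec{x})$. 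For each $i\geq 2$, the incoming edge at $v_i$ is unchanged, so the set of valid single-edge rotations at $v_i$ is the same for $T$ and $T'$; only the local-vs-global classification can differ, and it does so precisely for rotations whose defining cycle $C_i$ passes through $e_1$ in $T$, which becomes rerouted through $e_1'$ in $T'$ and may flip from local to global or vice versa.

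The main obstacle is then the combinatorial bookkeeping required to see that these flipped classifications along $C$ sum, together with $\Delta_1$, to exactly $-2$. My plan is to exploit the structure of $C$ and argue via a discrete ``divergence''-style balance in the spirit of the proof of Lemma \ref{Lemma:degreedifference}: rotations whose cycle $C_i$ crosses $C$ come in pairs, one side gaining a local move while the other loses one, and the overall asymmetry is dictated by the orientation of $C$ and by which endpoints sit at $v_1$. Once the identity $\sum_i\Delta_i=-2$ is established, invariance under a single global move is immediate, and iterating along any clock-move sequence joining two arbitrary rectangles $X_\alpha$ and $X_\beta$ completes the proof of the proposition.
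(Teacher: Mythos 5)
Your reduction is exactly the paper's: by Theorem \ref{Thm:Clock} one may assume $\vec{x}\in X_\alpha$ and $\vec{x'}\in X_\beta$ are related by a single counter-clockwise global move at $v_1$, and via (\ref{eq:average}) the claim becomes $\sum_i\Delta_i=-2$, with $R_1(\vec{x})=L_1(\vec{x'})=0$ also as in the paper. But the heart of the argument --- pinning down each $\Delta_i$ --- is left as a ``plan'', and the one structural claim you do assert is false in general. It is not true that the set of valid single-edge rotations at a vertex $v_i$, $i\geq 2$, is the same for $T$ and $T'$: replacing $e_i$ by $f$ yields a spanning tree precisely when the tail of $f$ is not a descendant of $v_i$, and the global move re-attaches the entire subtree below $v_1$, which can change the descendant sets of other vertices. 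The paper's own Example \ref{exp:lattice} (Figure \ref{fig:symmetricaxis}) already exhibits this: the global move occurs at $v_1$, yet the two maximal rectangles have extents $2$ and $5$ in the direction of the uninvolved vertex $v_3$, so both the admissible rotations at $v_3$ and the counts $L_3,R_3$ change there, and by more than a ``paired flip'' of classifications ($R_3$ drops by $3$). Since $L_i,R_i$ count consecutive local moves inside the maximal rectangle, even a correct statement about which rotations flip between local and global would not by itself control them.

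What the actual proof requires, and what your sketch does not supply, is an exact formula for each $\Delta_i$, and for this the relevant object is not the cycle $C\subset T\cup\{e_1'\}$ in $G$ but the dual cycle $C^*\subset T_{\vec{x}}^*\cup T_{\vec{x'}}^*$. The paper shows that the edges of $\mathcal{E}_i$ (edges into $v_i$ meeting $C^*$) are consecutive (Lemmas \ref{lemma:consecutive} and \ref{lemma:consecutive1}) and determines the induced orientations of the dual edges around each $v_i$ (Lemma \ref{lemma:orientations}); translated through the state correspondence this gives $\Delta_1=|\mathcal{E}_1|-2$, $\Delta_i=+|\mathcal{E}_i|$ for $v_i$ inside $C^*$, and $\Delta_i=-|\mathcal{E}_i|$ for $v_i$ outside $C^*$. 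Only then does the divergence balance from the proof of Lemma \ref{Lemma:degreedifference}, namely $\sum_{v_i\ \mathrm{inside}\ C^*}|\mathcal{E}_i|=\sum_{v_j\ \mathrm{outside}\ C^*}|\mathcal{E}_j|$, yield $\sum_i\Delta_i=-2$. So while you have identified the right target identity and the right flavor of cancellation, the quantitative vertex-by-vertex analysis that makes it true is missing, and the substitute heuristic you propose would not produce it.
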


\medskip

\begin{example}

In reference to the earlier Example \ref{exp:lattice}, we determined all lattice points representing spanning trees in $G'$ rooted at $r=v_2$ and all clock moves among them. In Figure \ref{fig:symmetricaxis} below, the red edges represent local moves and blue edges represent global moves. Two lattice points are locally-clock-equivalent if and only if there is a path consisting of red edges connecting them. In this example, there are two maximal rectangles. Both of their average norms are $4.5$ thus they share the same symmetric axis $x_1+x_3=4.5$.

\begin{figure}[!h]
    \centering
    \includegraphics[width=12cm]{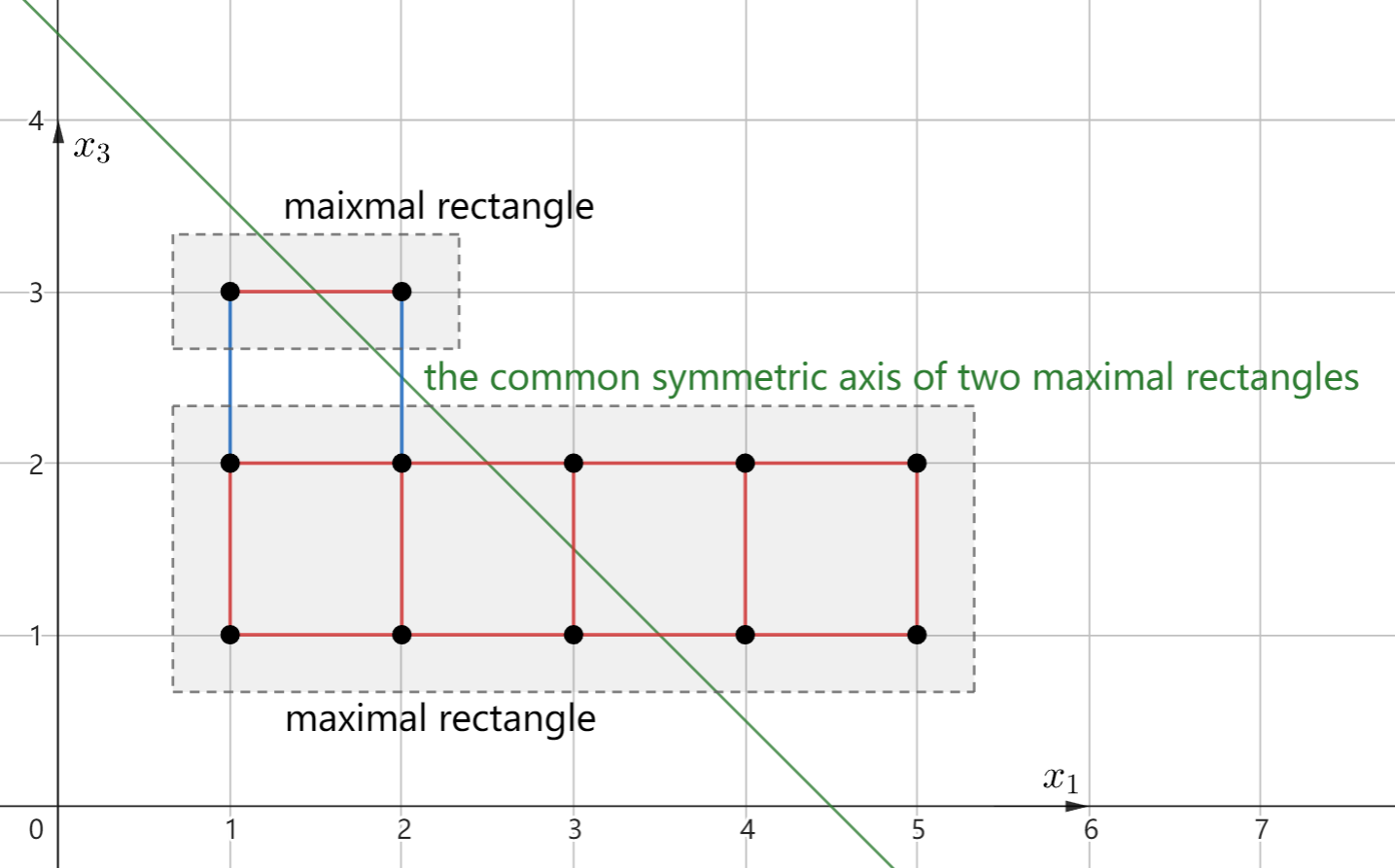}
    \caption{Rectangle 2 and Rectangle 3 in Figure \ref{fig:lattice} merge to form the lower maximal rectangle, which contributes $t^2+2t^3+2t^4+2t^5+2t^6+t^7$, while the upper maximal rectangle contributes $t^4+t^5$ to $\Delta_{G'}(t)$. Each satisfies the trapezoidal property and has an average degree of $4.5$. Their sum, $t^2+2t^3+3t^4+3t^5+2t^6+t^7$, also satisfies the trapezoidal property.}
    \label{fig:symmetricaxis}
\end{figure}

\end{example}

\medskip

Now, suppose $T_{\vec{x}}$ and $T_{\vec{y}}$ are the two spanning trees of $G$ represented by the lattice points $\vec{x}\in X_\alpha$ and $\vec{y}\in X_\beta$, respectively, and there is a counter-clockwise global move turning $T_{\vec{x}}$ to $T_{\vec{y}}$. Without loss of generality, assume this global clock move takes place at $v_1$. Then we have $x_1+1=y_1$ and $x_i=y_i$ for all $i>1$. If we denote by $e_{i,j}$ the $j$-th edge pointing at the $i$-th vertex, this global clock move turns $e_{1,x_1}$ to $e_{1,y_1}$. For simplicity, we will sometimes call an edge $e_{i,j}$ on the left (resp. right) hand side of $e_{i,j'}$ if $j<j'$ (resp. $j>j'$).  Also, we will call a region on the left (resp. right) hand side of an edge $e$ if it is adjacent to $e$ and locates on its clock-wise (resp. counter-clock-wise) side.

Recall that the union of the dual spanning trees $T_{\vec{x}}^*$ and $T_{\vec{y}}^*$ contains a unique cycle, denoted $C^*$. By Jordan curve theorem, $C^*$ divides the plane into an interior part and an exterior part. We say an edge points into (resp. out of) $C^*$ if its head lies in the interior (resp. exterior) part.



\begin{lemma}\label{lemma:consecutive}
    Let $e\in T_{\vec{x}}\cap T_{\vec{y}}$ be a common edge of the two spanning trees.  Then for all the edges on the left-hand side of $e$, either $C^*$ intersects none of them, or the edges intersecting $C^*$ are consecutive. The same result holds for the edges on the right-hand side of $e$.
\end{lemma}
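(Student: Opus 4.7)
My plan is to argue by contradiction: assume that on the left of $e$ the $\mathcal{E}$-edges are \emph{not} consecutive, and derive a contradiction by combining planarity, connectedness of the components of $T_{\vec x}\setminus\{e_{1,x_1}\}$, and the transverse condition.

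\textbf{Setup and the Jordan curve $\gamma$.} Let $v$ be the head of $e$, so that $e=e_{v,j_0}$ is the unique incoming tree-edge at $v$ in both $T_{\vec x}$ and $T_{\vec y}$; in particular $v\neq r,v_1$, and $u_{j_0}$ is the parent of $v$ in $T_{\vec x}$. Let $A,B$ denote the two components of $T_{\vec x}\setminus\{e_{1,x_1}\}$; after relabeling assume $v\in B$, so $u_{j_0}\in B$ and $e_{v,j}\in\mathcal{E}\iff u_j\in A$. If the left-hand $\mathcal{E}$-edges were not consecutive, we could pick indices $a<c<b\leq j_0-1$ with $u_a,u_b\in A$ and $u_c\in B$. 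Take the unique path $P_{AB}$ in the subtree $A$ from $u_a$ to $u_b$ (it avoids $v$ because $v\in B$), and form the simple closed curve $\gamma:=e_{v,a}\cup P_{AB}\cup e_{v,b}$, which meets $v$ at a single point.

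\textbf{Placing $u_c$ and $u_{j_0}$ on opposite sides of $\gamma$.} Near $v$, the two angular sectors cut by $e_{v,a}$ and $e_{v,b}$ realize the two local sides of $\gamma$. Call Sector~I the sector containing the incoming edges $e_{v,a+1},\dots,e_{v,b-1}$, and Sector~II the other. By the transverse condition every outgoing edge at $v$ lies in Sector~II, and $e=e_{v,j_0}$ also lies in Sector~II since $j_0>b$. Because $G$ is a plane graph, no edge of $G$ crosses $\gamma$, so the initial angular directions of $e_{v,c}$ and $e_{v,j_0}$ at $v$ propagate to their tails: $u_c$ lands on the Sector~I side of $\gamma$ and $u_{j_0}$ on the Sector~II side. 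Thus $u_c$ and $u_{j_0}$ lie on opposite sides of $\gamma$, even though both belong to the connected subtree $B$.

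\textbf{The contradiction via connectedness of $B$.} The unique path $P\subset B$ from $u_c$ to $u_{j_0}$ uses only tree edges, so its edges are disjoint from those of $\gamma$ (which lie in $A\cup\mathcal{E}$), and $V(B)\cap V(\gamma)=\{v\}$. Hence $P$ can only meet $\gamma$ at $v$, and must do so since its endpoints are separated. The final edge of $P$ is $e$ (as $u_{j_0}$ is the parent of $v$), and the step just before $v$ must come from a child of $v$ through the reverse of an outgoing tree-edge---otherwise $u_c$ and $u_{j_0}$ would lie in the same component of $T_{\vec x}\setminus\{v\}$ and $P$ would bypass $v$. That outgoing edge sits at an angle in Sector~II at $v$, so $P$ approaches $v$ through Sector~II. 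But $P$ starts on the Sector~I side of $\gamma$ and cannot cross $\gamma$ except at $v$, so it must approach $v$ through Sector~I---a contradiction. The right-hand case (with $j_0<a<c<b$) is entirely symmetric. I expect the main obstacle to be the planar ``dragging'' step in the second paragraph: carefully verifying that the plane graph structure places $u_c$ and $u_{j_0}$ genuinely on opposite sides of $\gamma$, after which the transverse condition's angular segregation of outgoing edges delivers the contradiction.
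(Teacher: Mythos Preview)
Your proof is correct (implicitly using that $\mathcal{E}$ coincides with the fundamental cut of $e_{1,x_1}$ in $T_{\vec x}$, by planar cycle--cut duality), but it takes a genuinely different route from the paper. The paper argues entirely in the dual graph: since $e$ is the unique incoming tree edge at its head in both $T_{\vec x}$ and $T_{\vec y}$, every left-hand edge $e_k$ lies outside both trees, so $e_k^*\in T_{\vec x}^*\cap T_{\vec y}^*$; and because consecutive $e_k$'s share a face, $e_1^*,\dots,e_n^*$ form a path in $G^*$. If the $C^*$-members among them were not consecutive, an arc of $C^*$ together with the intermediate $e_k^*$'s would close up to a second cycle in $T_{\vec x}^*\cup T_{\vec y}^*$, contradicting the uniqueness of $C^*$---essentially a three-line argument that never invokes the transverse condition. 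Your primal approach instead builds a Jordan curve $\gamma$ from two cut-edges and a path in the component $A$, then uses planarity together with the transverse hypothesis (which forces every outgoing tree edge at $v$ into Sector~II) to trap the $B$-path from $u_c$ to $u_{j_0}$ on the wrong side of $\gamma$. What you gain is a concrete primal picture tied directly to the $A/B$ decomposition; the cost is more case-handling and an appeal to transversality that the dual argument avoids. The two proofs are really the same planar separation, read through cycle--cut duality in opposite directions.
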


\begin{proof}
    Let $e_1,\cdots, e_n$ be all edges on the left-hand side of $e$ ordered from left to right, and $e_1^*,\cdots,e_n^*$ be their dual edges in $G^*$. Since an edge $e_i$ intersects $C^*$ if and only if $e_i^* \in C^*$, the lemma is equivalent to the claim that such edges $e_i^*$'s are consecutive in $e_1^*\cup \cdots \cup e_n^*$. See Figure \ref{fig:consecutive}.

    \begin{figure}[!h]
    \centering
    \includegraphics[width=15.7cm]{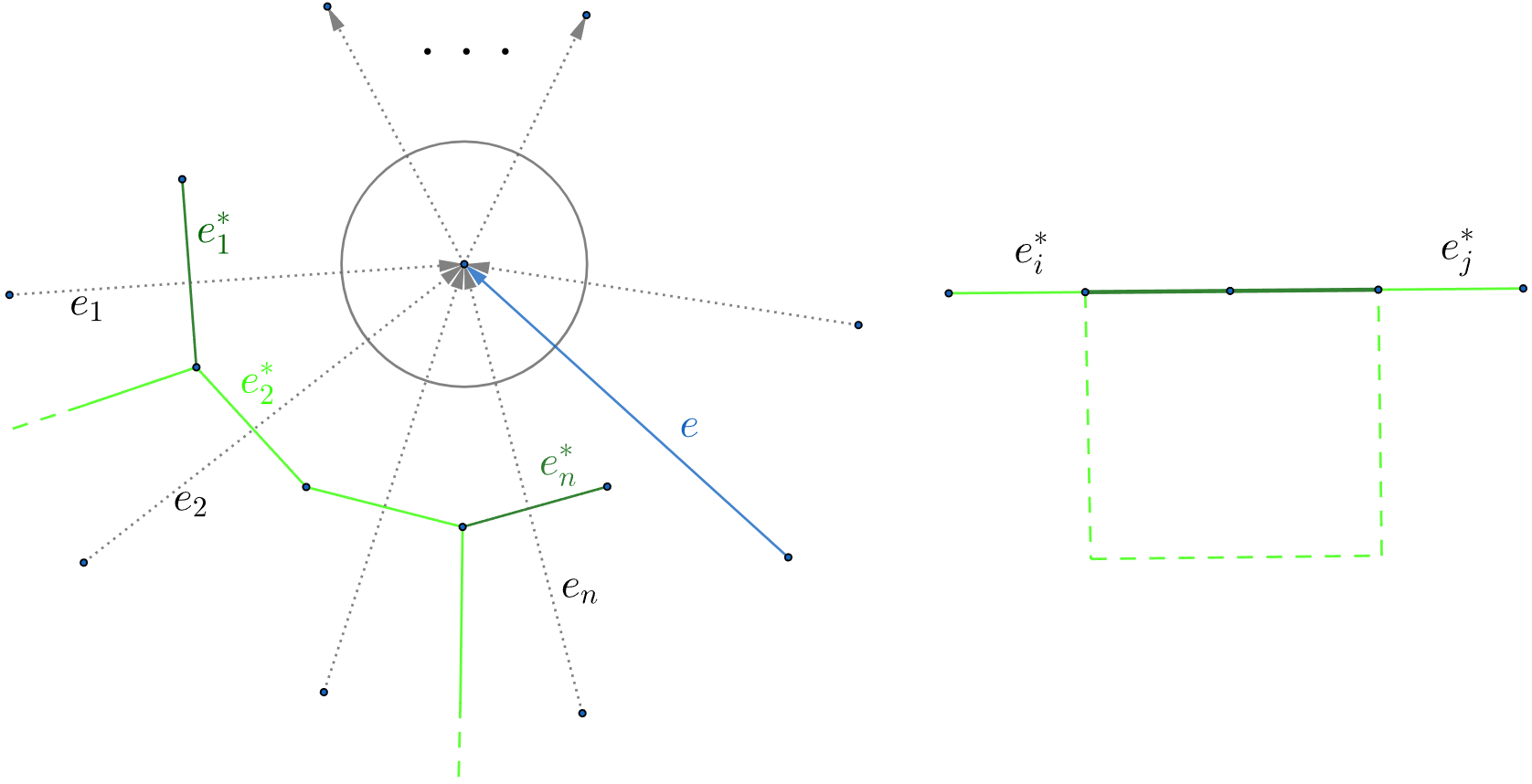}
    \caption{As a convention in this section, we will use light green and dark green to denote edges in $C^*$ and the complement of $C^*$ in $T_{\vec{x}}^*\cup T_{\vec{y}}^*$, respectively.    
    Left: A concrete example. The light green edges are consecutive in $e_1^*\cup \cdots \cup e_n^*$. Right: If the light green edges are not consecutive in $e_1^*\cup \cdots \cup e_n^*$, then there is a cycle formed by some light and dark green edges, a contradiction.}
    \label{fig:consecutive}
\end{figure}

    Suppose they are not consecutive. Then there exists some $1\leq i  < j \leq n$ such that $e_i^*,e_j^*\in C^*$ while $e_k^*\notin C^*$ for all $i<k<j$. Since $C^*$ is connected, there exists a path in $C^*$ connecting $e_i^*$ and $e_j^*$, indicated by the dashed light green path in Figure \ref{fig:consecutive}. By assumption, $e_{i+1}^*\cup \cdots \cup e_{j-1}^*\not\subset C^*$, but it also connects $e_i^*$ and $e_j^*$. Thus, the union of the light green path and $e_{i+1}^*\cup \cdots \cup e_{j-1}^*$ gives another cycle different from $C^*$ in $T_{\vec{x}}^*\cup T_{\vec{y}}^*$, which contradicts the uniqueness of $C^*$.
    
\end{proof}

The same argument applies to the edges pointing at $v_1$, where the global clock move takes place, and we have:

\begin{lemma}\label{lemma:consecutive1}
    Among all edges pointing at $v_1$, the edges intersecting $C^*$ are consecutive.
\end{lemma}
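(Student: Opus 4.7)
The plan is to adapt the cycle-uniqueness argument from the proof of Lemma \ref{lemma:consecutive} to the present setting, where $v_1$ is the vertex at which the global clock move happens rather than the head of a common edge. The only adjustment needed is a small preliminary observation: every dual edge $e_{1,k}^*$ with $1 \le k \le d_1$ lies in $T_{\vec{x}}^* \cup T_{\vec{y}}^*$. This holds because $T_{\vec{x}}$ and $T_{\vec{y}}$ each contain exactly one incoming edge at $v_1$, namely $e_{1,x_1}$ and $e_{1,y_1}$, so for $k \notin \{x_1, y_1\}$ the edge $e_{1,k}$ belongs to neither spanning tree and hence $e_{1,k}^* \in T_{\vec{x}}^* \cap T_{\vec{y}}^*$, while by construction $e_{1,x_1}^* \in T_{\vec{y}}^*$ and $e_{1,y_1}^* \in T_{\vec{x}}^*$.

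With this in hand, I will argue by contradiction following the template of Lemma \ref{lemma:consecutive}. Assume $\{k : e_{1,k}^* \in C^*\}$ is not an interval, and pick $i < j$ with $j - i \ge 2$, $e_{1,i}^*, e_{1,j}^* \in C^*$, and $e_{1,k}^* \notin C^*$ for every $i < k < j$. By the transverse condition, the incoming edges at $v_1$ are cyclically consecutive around $v_1$, so their duals $e_{1,1}^*, \ldots, e_{1,d_1}^*$ form a consecutive arc along the boundary of the face of $G^*$ dual to $v_1$. In particular, $e_{1,i+1}^*, \ldots, e_{1,j-1}^*$ constitute a path $A$ in $G^*$ whose endpoints are the vertices shared with $e_{1,i}^*$ and $e_{1,j}^*$, respectively. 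By the preliminary observation, $A \subseteq T_{\vec{x}}^* \cup T_{\vec{y}}^*$, and by the choice of $i, j$ none of the edges of $A$ lies on $C^*$. Since both endpoints of $A$ sit on $C^*$, concatenating $A$ with an arc of $C^*$ between those endpoints (or, in the degenerate case where the two endpoints coincide, noting that $A$ itself is then a closed walk that must contain a simple cycle) produces a simple cycle in $T_{\vec{x}}^* \cup T_{\vec{y}}^*$ that uses at least one edge of $A$, hence is distinct from $C^*$. This contradicts the fact that $T_{\vec{x}}^* \cup T_{\vec{y}}^*$ contains exactly one cycle, since it is $T_{\vec{x}}^*$ plus the single edge $e_{1,x_1}^*$.

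The only real subtlety will be the preliminary containment statement, which is what distinguishes this lemma from Lemma \ref{lemma:consecutive}; once that is in place, the rest of the argument is a direct transcription of the earlier proof, with the face of $G^*$ dual to $v_1$ playing the role formerly played by the face dual to the head of the common edge.
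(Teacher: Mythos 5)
Your proof is correct and follows essentially the same route as the paper, whose own justification is simply that the argument of Lemma \ref{lemma:consecutive} (uniqueness of the cycle $C^*$ in $T_{\vec{x}}^*\cup T_{\vec{y}}^*$) carries over to the vertex $v_1$. Your preliminary observation that every $e_{1,k}^*$ lies in $T_{\vec{x}}^*\cup T_{\vec{y}}^*$ (including $e_{1,x_1}^*\in T_{\vec{y}}^*$ and $e_{1,y_1}^*\in T_{\vec{x}}^*$) is exactly the point the paper leaves implicit, and your use of transversality to see that the duals of consecutive incoming edges share a vertex matches what Lemma \ref{lemma:consecutive} already tacitly uses.
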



\medskip

Our next goal is to understand the orientations of the edges in $G^*$.  Note that $T_{\vec{x}}^*$ and $T_{\vec{y}}^*$ are rooted trees, so both have an induced orientation on their edges as discussed in the previous section. This, however, does not mean that the induced orientations are always consistent on the common edges of $T_{\vec{x}}^*$ and $T_{\vec{y}}^*$.  Indeed, it is not hard to see that the induced orientations on $T_{\vec{x}}^*$ and $T_{\vec{y}}^*$ are opposite on the common edges in $C^*$ whereas they are the same on the compliment of $C^*$.
To be more explicit, if we let $C_x^*:=C^*\backslash e_{1,x_1}^*$, $C_y^*:=C^*\backslash e_{1,y_1}^*$ and equip them with the induced orientations, then $C^*_x$ goes counter-clockwise relative to $v_1$ while $C^*_y$ goes clockwise. See Figure \ref{fig:globalmove} for an illustration.

\begin{figure}[!h]
    \centering
    \includegraphics[width=11cm]{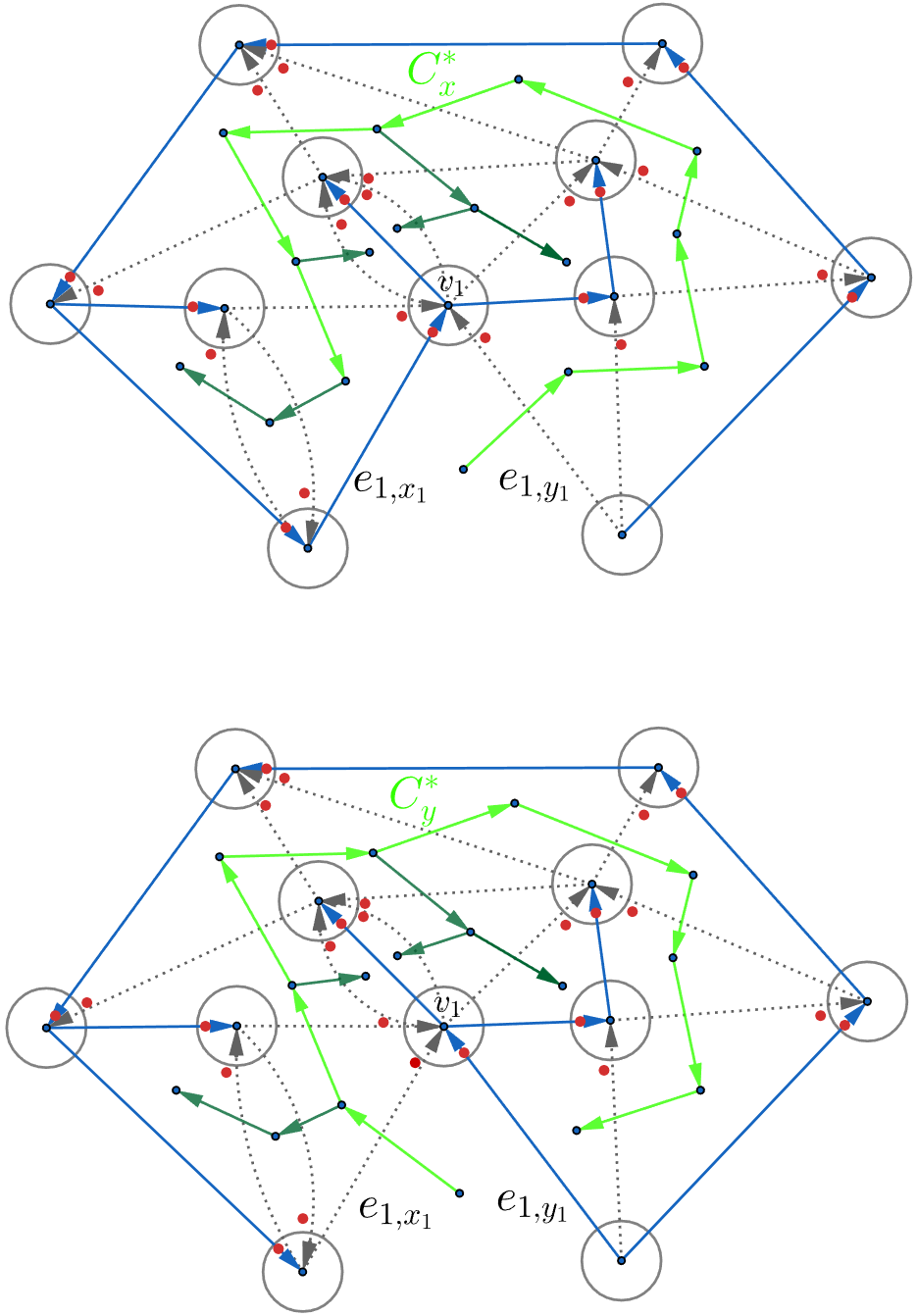}
    \caption{A concrete example of a counter-clockwise global clock move from $\vec{x}$ to $\vec{y}$. Note that the light green $C_x^*$ and $C_y^*$ differ by a single edge and are equipped with opposite induced orientations, while the induced orientations on the dark green edges are the same.}
    \label{fig:globalmove}
\end{figure}

\begin{lemma}\label{lemma:orientations}
    Let $e$ be an edge in $T_{\vec{x}}$ pointing into $C^*$, and $e_1,\cdots,e_n$ be all edges on the left-hand side of $e$ ordered from left to right. 
    Suppose $C^* \cap T_x^*=  e_p^*\cup \cdots\cup e_q^*$ (by Lemma \ref{lemma:consecutive} we know they are consecutive). With the induced orientation on $T_x^*$, the paths $e_p^*\cup \cdots \cup e_q^*$ and $e_{q+1}^*\cup \cdots \cup e_n^*$ (if exists) direct towards $e$ while $e_1^*\cup \cdots \cup e_{p-1}^*$ (if exists) directs away from $e$. See Figure \ref{fig:orientations}.

\end{lemma}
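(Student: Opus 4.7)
My plan is to reduce the lemma to the single assertion that $F_{p-1}$ is the vertex among $F_0, F_1, \dots, F_n$ closest to $r^*$ in the rooted tree $T_{\vec{x}}^*$. Once this is established, the orientation pattern in the statement follows automatically, since in a rooted tree the orientations on a subpath diverge outward from its vertex of smallest depth.

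Since $e$ is the unique in-edge of $v$ in $T_{\vec{x}}$, none of $e_1,\dots,e_n$ belong to $T_{\vec{x}}$, so $e_1^*,\dots,e_n^*$ all lie in $T_{\vec{x}}^*$ and form a subpath joining $F_0$ to $F_n$. In particular, the unique tree path between any two of these vertices uses only arc edges, and removing all $n$ of them leaves each $F_i$ in its own component. Thus the lemma reduces to the claim that $r^*$ is in the same component as $F_{p-1}$. For the sub-segment $e_p^*\cup\cdots\cup e_q^*$ lying in $C^*$, I would invoke the standing observation that the tree-induced orientation on $C_x^*$ traverses $C^*$ counter-clockwise with respect to any interior point of the Jordan curve $C^*$; since $v$ lies in the interior (because $e$ points into $C^*$) and $F_{p-1},F_p,\dots,F_q$ appear in CCW order around $v$, the edges $e_p^*,\dots,e_q^*$ are oriented rightward, and $F_{p-1}$ is the upstream endpoint of this sub-segment of $C_x^*$.

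It remains to show that the tree path from $r^*$ to every $F_j$ with $j\ne p-1$ must pass through $F_{p-1}$. Suppose for contradiction that for some $j\ne p-1$ the tree path from $r^*$ to $F_j$ avoids all arc edges. Since distinct $F_i$'s sit in distinct components of $T_{\vec{x}}^*$ after deleting the arc, this path also avoids every other $F_i$; concatenating with the arc path from $F_j$ to $F_{p-1}$ (the unique tree path between them) yields a simple tree path from $r^*$ to $F_{p-1}$ whose last edges are arc edges. On the other hand, the upstream position of $F_{p-1}$ on the sub-segment $e_p^*\cup\cdots\cup e_q^*$ of $C_x^*$ furnishes a separate tree path from $r^*$ to $F_{p-1}$ that descends along $C_x^*$ and uses no arc edges. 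Uniqueness of tree paths yields a contradiction.

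The main obstacle is making the second half of the last paragraph rigorous, in particular showing that the entry point $W$ at which the subtree containing $r^*$ attaches to $C_x^*$ lies on the upstream side of $F_{p-1}$ along $C_x^*$, rather than strictly inside the sub-segment $F_{p-1}$--$F_p$--$\cdots$--$F_q$ or on its $F_q$-side. This amounts to transferring the CCW-around-$v_1$ orientation observation to the neighborhood of $v$: since $v$ and $v_1$ both lie in the bounded component of $\mathbb{R}^2\setminus C^*$, the tree-induced orientation of $C_x^*$ is CCW around $v$ as well, which forces $W$ to lie upstream of $F_{p-1}$ and not inside the arc sub-segment. Formalizing this requires a careful application of the Jordan curve theorem together with the planarity of the dual embedding of $G^*$.
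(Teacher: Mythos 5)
Your setup is sound and largely parallels the paper's: the duals $e_1^*,\dots,e_n^*$ all lie in $T_{\vec{x}}^*$ (because $e$ is the unique tree edge entering $v$) and form a tree path; reducing the lemma to the assertion that $F_{p-1}$ is the vertex of this path closest to $r^*$ is valid; and your claim that the $C^*$-portion $e_p^*\cup\cdots\cup e_q^*$ is oriented towards $e$ with $F_{p-1}$ as its upstream end is exactly the paper's first step (a counter-clockwise curve crosses each edge whose head lies in its interior from its left face to its right face). The genuine gap is the one you flag yourself: you never establish that the tree reaches $F_{p-1}$ otherwise than through the arc, and the route you propose --- locating the attachment point $W$ where the component of $r^*$ meets $C_x^*$ and showing it is upstream of $F_{p-1}$ --- is both unproven and unnecessarily global. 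The missing idea is a purely local one, and it is how the paper finishes in two lines: every non-root vertex of $T_{\vec{x}}^*$ has in-degree exactly $1$, and since the induced orientation on $C_x^*$ is consistently counter-clockwise, the $C^*$-edge preceding $e_p^*$ lies in $C_x^*\subset T_{\vec{x}}^*$ and points at $F_{p-1}$, the tail of $e_p^*$. By the in-degree-$1$ condition this is the unique in-edge of $F_{p-1}$, so $e_{p-1}^*$ must point away from $F_{p-1}$, and the same in-degree argument propagates outward along the arc (on the other side $F_q$ already receives $e_q^*$, forcing $e_{q+1}^*,\dots,e_n^*$ towards $e$). No information about where the root's subtree attaches to $C_x^*$ is needed. (The only point to verify is that this preceding $C^*$-edge is not the deleted edge $e_{1,x_1}^*$; its counter-clockwise head is the corner face at $v_1$ between $e_{1,x_1}$ and $e_{1,y_1}$, which cannot be $F_{p-1}$, since that would make $F_{p-1}$ incident to three distinct edges of the cycle $C^*$.)

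A secondary, fixable slip: your contradiction hypothesis ``the tree path from $r^*$ to $F_j$ avoids all arc edges'' is not the negation of ``that path passes through $F_{p-1}$'' --- the path could use arc edges without ever reaching $F_{p-1}$. This can be repaired by noting that the minimum-depth vertex of the arc always has a root-path avoiding arc edges, so it suffices to rule out $j\neq p-1$ for that particular vertex; but as written the reduction has a jump, and in any case your contradiction still hinges on the unproved upstream-attachment claim, which is precisely where the in-degree-$1$ observation above should be inserted.
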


\begin{figure}[!h]
    \centering
    \includegraphics[width=10cm]{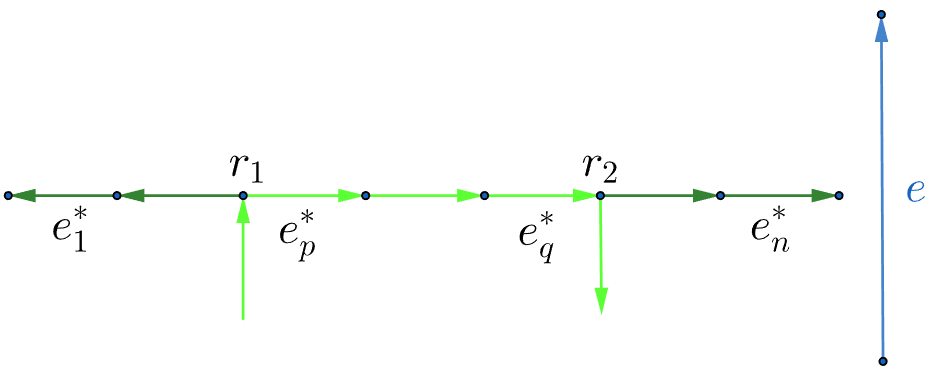}
    \caption{The induced orientation on the dual edges $e_1^*, \cdots, e_n^*$ on the left-hand side of $e$.}
    \label{fig:orientations}
\end{figure}

\begin{proof}
    Since $e$ points into $C^*$ and $C^*_x$ goes counter-clockwise along $C^*$, the path $e_p^*\cup \cdots \cup e_q^*$ must go towards $e$. Using the fact that $T_{\vec{x}}^*$ is a directed tree, we conclude that $e_{q+1}^*\cup \cdots \cup e_n^*$ must also direct towards $e$. 
    
    Let $r_1$ be the tail of $e_p^*$. There exists an edge of $C^*_x$ pointing at $r_1$. Consequently, $e_{p-1}^*\cup \cdots \cup e_1^*$ must direct at the opposite direction since $T_{\vec{x}}^*$ is a directed tree.

\end{proof}

\begin{remark}
Modifying the conditions in Lemma \ref{lemma:orientations} as outlined below results in eight analogous statements:
\begin{enumerate}

\item[(i)] $T_{\vec{x}}$ v.s. $T_{\vec{y}}$; 
\item[(ii)] ``pointing \textit{into}'' v.s. ``pointing \textit{out of}'';
\item[(iii)] ``on the \textit{left} hand side'' v.s. ``on the \textit{right} hand side''.  
    
\end{enumerate}
For simplicity, we will omit the detailed statements and only remark that all the orientations can be explicitly determined by the same methods above.

\end{remark}

\begin{proof}[Proof of Proposition \ref{proposition:sameaveragenorm}]
    By Theorem \ref{Thm:Clock}, it suffices to consider the case where 
    there exists some $\vec{x}\in X_\alpha$ and $\vec{y}\in X_\beta$ such that $\vec{x}$ and $\vec{y}$ are related by a global clock move. Without loss of generality, assume there is a counter-clockwise move applying on the vertex $v_1$ that turns $\vec{x}$ to $\vec{y}$, and hence $\vec{y}=(x_1+1,x_2,x_3,\dots,x_k)$. 
    Recall that we have used earlier the notation $\mathcal{E}$ to denote the set of edges in the graph $G$ that intersects $C^*$. 
    Let $\mathcal{E}_i$ denote the set of edges pointing at $v_i$ that intersect $C^*$. Then $\mathcal{E}=\sqcup_{i=1}^k \mathcal{E}_i$.  Our strategy is to relate the average norm of $X_\alpha$ and $X_\beta$ in terms of  $|\mathcal{E}_i|$ using Formula (\ref{eq:average}).

\bigskip\noindent
 First, we determine the relations between $R_1(\vec{x})$, $L_1(\vec{x})$ and $R_1(\vec{y})$, $L_1(\vec{y})$, the number of local clock moves that can be applied on $T_{\vec{x}}$ and $T_{\vec{y}}$ at the vertex $v_1$.
    By assumption, the clock move turning $e_{1,x_1}$ to $e_{1,y_1}=e_{1,x_1+1}$ is a global counter-clockwise move.  Hence, the clock move is not local, and so $R_1(\vec{x})=L_1(\vec{y})=0$ by definition.
    
    We claim that $L_1(\vec{x})$ is equal to the number of edges on the left-hand side of $e_{1,x_1}$ that point at $v_1$ and intersect $C^*$. We refer to Figure \ref{fig:leftmove} for a specific example. By Lemma \ref{lemma:consecutive1}, we know the edges pointing at $v_1$ that intersect $C^*$ are consecutive and $e_{1,x_1}^*\subset C^*$. Hence there is a leftmost edge, denoted $e_{1,l}$, so that all edges between $e_{1,x_1}$ and $e_{1,l}$ intersects $C^*_x$.  Since $C^*_x$ goes in counter-clockwise direction, all the crossings generated by the edges intersecting $C^*_x$ are assigned to their east corners, which allows a sequence of clockwise local moves from $e_{1,x_1}$ all the way to $e_{1,l}$. After that, the crossing generated by $e_{1, l-1}$ (if exists) was assigned to its west corners, so no more local moves on $v_1$ can be further applied.
    
    Similarly, the fact that $C^*_y$ goes in clockwise direction can be used to show that $R_1(\vec{y})$ is equal to the number of edges on the right-hand side of $e_{1,y_1}=e_{1,x_1+1}$ that intersect $C^*$. This gives 
    \begin{equation*}
    \begin{aligned}
        (R_1(\vec{y})-L_1(\vec{y}))-(R_1(\vec{x})-L_1(\vec{x}))&=(R_1(\vec{y})-0)-(0-L_1(\vec{x}))\\
        &= R_1(\vec{y})+L_1(\vec{x})\\
        &= |\mathcal{E}_1|-2.
    \end{aligned}
    \end{equation*}
Here, we subtract $2$ from $|\mathcal{E}_1|$ because $e_{1,x_1}, e_{1,y_1} \in \mathcal{E}_1$ are the only edges pointing at $v_1$ that are not counted in either $L_1(\vec{x})$ or $R_1(\vec{y})$.

   \begin{figure}[!h]
    \centering
    \includegraphics[width=15cm]{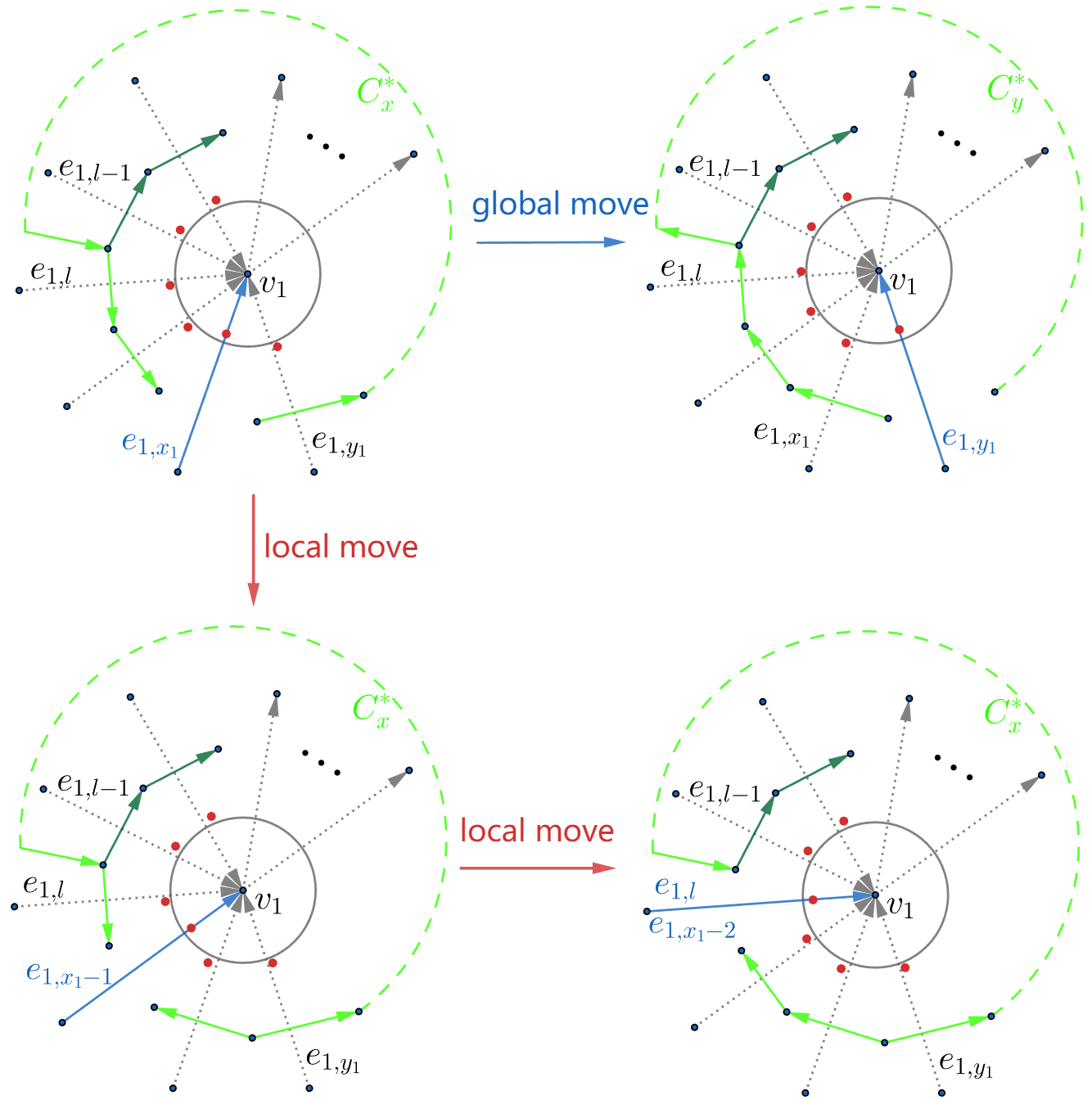}
    \caption{The edges pointing at $v_1$ that intersect $C^*_x$ on the left-hand side of $e_{1,x_1}$ are adjacent to $e_{1,x_1}$, which allow a sequence of local moves (the red arrows) from $e_{1,x_1}$ to them. In this specific example, $L_1(\vec{x})=2$.}
    \label{fig:leftmove}
    \end{figure}

    \bigskip\noindent
    Second, we apply the same idea to determine the relations between $R_i(\vec{x})$, $L_i(\vec{x})$ and $R_i(\vec{y})$, $L_i(\vec{y})$ for all other vertices $v_i$.  There are two cases depending on whether $v_i$ is inside or outside $C^*$. 
    
    Case 1: Assume $v_i$ is inside $C^*$.  Applying Lemma \ref{lemma:orientations} with its notations, we have $L_i(\vec{x})=n-p+1$ since $e_p^*,\cdots,e_n^*$ assign the crossings to their east corners while $e_1^*,\cdots,e_{p-1}^*$ assign the  crossings to their west corners. When we apply the analogous statement of Lemma \ref{lemma:orientations} on $T_{\vec{y}}$ instead, we find $e_p^*,\cdots,e_q^*$ will reverse orientation while $e_1^*,\cdots,e_{p-1}^*$ and $e_{q+1},\cdots,e_n$ keep their original orientations.  Consequently, only $e_{q+1},\cdots,e_n$ can be reached from $e_i$ by local moves, which implies $L_i(\vec{y})=n-q$. Thus, we have 
    $$L_i(\vec{x})-L_i(\vec{y})=q-p+1,$$ which is exactly the number of edges in $\mathcal{E}_i$ on the left-hand side of $e$ (See Figure \ref{fig:leftmove1} for an illustration).
    An identical argument can be applied to show that $R_i(\vec{y})-R_i(\vec{x})$ is equal the number of edges in $\mathcal{E}_i$ on the right-hand side of $e$. Putting together, we have

    \begin{equation*}
    \begin{aligned}
        (R_i(\vec{y})-L_i(\vec{y}))-(R_i(\vec{x})-L_i(\vec{x}))
        &= (R_i(\vec{y})-R_i(\vec{x}))+(L_i(\vec{x})-L_i(\vec{y}))\\
        &= |\mathcal{E}_i|.
    \end{aligned}
    \end{equation*}

   \begin{figure}[!h]
    \centering
    \includegraphics[width=16cm]{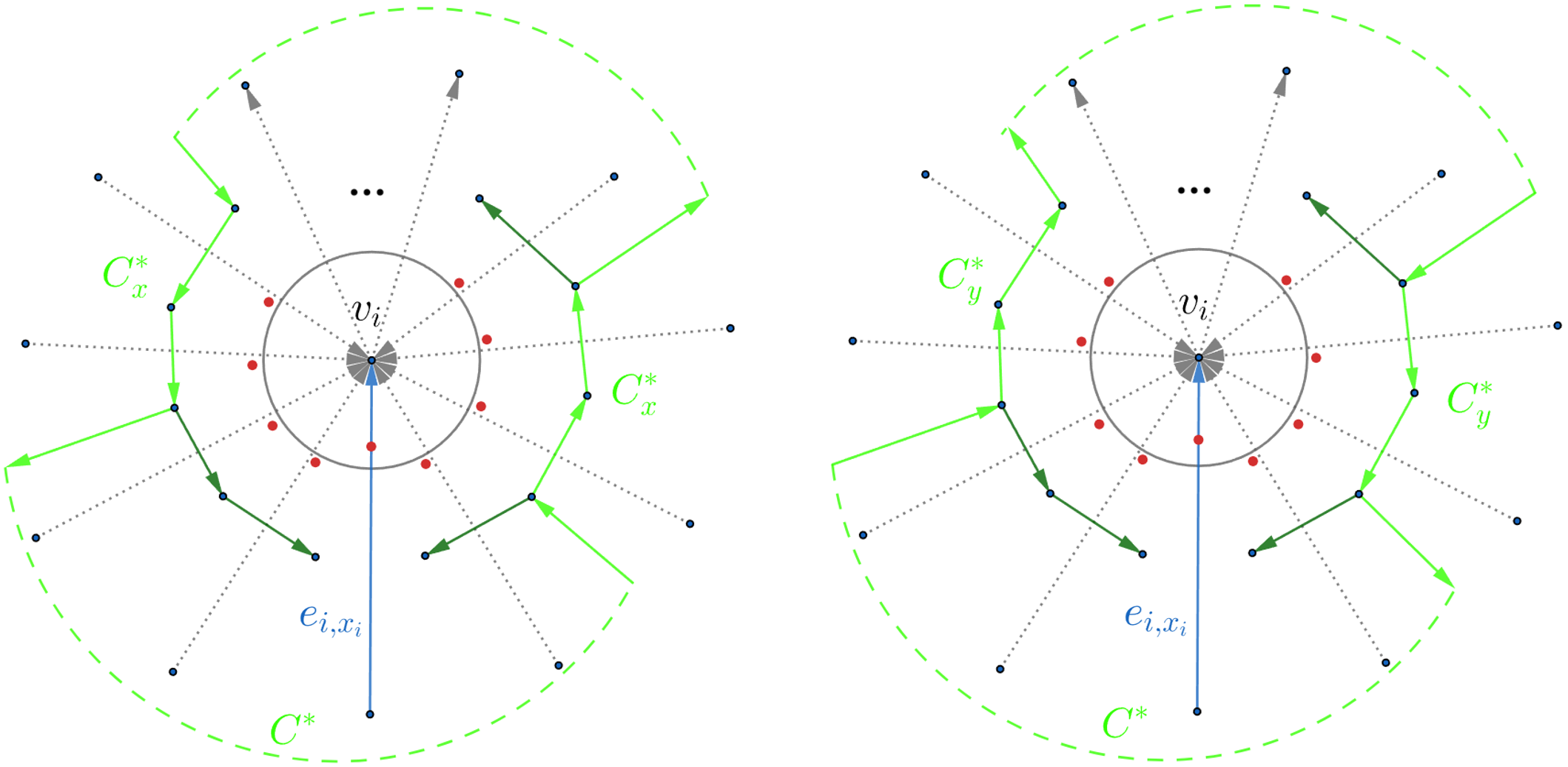}
    \caption{Left: All crossing on the left-hand side of $e_i$ are assigned to their east corners, thus $L_i(\vec{x})=4$. Only one crossing on the right-hand side of $e_i$ is assigned to its west corner, and this edge is next to $e_i$, thus $R_i(\vec{x})=1$. Right: In this example, $L_i(\vec{y})=2$ and $R_i(\vec{y})=3$. So $L_i(\vec{x})-L_i(\vec{y})=2$ equals the number of edges on the left-hand side of $e_i$ that intersect $C^*$ and $R_i(\vec{y})-R_i(\vec{x})=2$ equals the number of edges on the right-hand side of $e_i$ that intersect $C^*$.}
    \label{fig:leftmove1}
\end{figure}

\medskip 
Case 2: Assume $v_i$ is outside $C^*$.  Applying the analogous statement of Lemma \ref{lemma:orientations} on an edge ``pointing out of'' $C^*$ instead, we can show that $L_i(\vec{y})-L_i(\vec{x})$ is equal to the number of edges in $\mathcal{E}_i$ which lie on the left-hand side of $e_i$, and $R_i(\vec{x})-R_i(\vec{y})$ is equal to the number of edges in $\mathcal{E}_i$ which lie on the right-hand side of $e_i$. Thus
    \begin{equation*}
    \begin{aligned}
        (R_i(\vec{y})-L_i(\vec{y}))-(R_i(\vec{x})-L_i(\vec{x}))
        &= (R_i(\vec{y})-R_i(\vec{x}))+(L_i(\vec{x})-L_i(\vec{y}))\\
        &= -|\mathcal{E}_i|.
    \end{aligned}
    \end{equation*}


\bigskip
    
Last, recall that in the proof of Lemma \ref{Lemma:degreedifference} we showed that the number of edges pointing into and out of $C^*$ is equal. In other words, $$\sum_{v_i\text{ inside } C^*}|\mathcal{E}_i|=\sum_{v_j\text{ outside } C^*}|\mathcal{E}_j|.$$

Now we can use (\ref{eq:average}) to compute

\begin{equation*}
    \begin{aligned}
        A(X_\beta)
        = &\sum_{p=1}^k (y_p+\frac{1}{2}(R_p(\vec{y})-L_p(\vec{y})))\\
        = &1+\sum_{p=1}^k x_p + \sum_{v_i\text{ inside } C^*}\frac{1}{2}(R_i(\vec{x})-L_i(\vec{x})+|\mathcal{E}_i|)\\
         &+\frac{1}{2}(-2) + \sum_{v_j\text{ outside } C^*}\frac{1}{2}(R_j(\vec{x})-L_j(\vec{x})-|\mathcal{E}_j|)\\
         = &\sum_{p=1}^k (x_p + \frac{1}{2}(R_p(\vec{x})-L_p(\vec{x})))\\
         &+\frac{1}{2}\left(\sum_{v_i\text{ inside } C^*}|\mathcal{E}_i|-\sum_{v_j\text{ outside } C^*}|\mathcal{E}_j|\right)\\
         = &\sum_{p=1}^k (x_p + \frac{1}{2}(R_p(\vec{x})-L_p(\vec{x})))\\
         = & A(X_\alpha).
    \end{aligned}
    \end{equation*}
    
    This completes the proof that all maximal rectangles share the same symmetry axis.

\end{proof}

\begin{lemma}\label{lemma:cauchyproduct}
    If $(a_i)_{i=1}^n$ and $(b_j)_{j=1}^m$ are sequences satisfying the trapezoidal property, then the Cauchy product of $(a_i)_{i=1}^n$ and $(b_j)_{j=1}^m$ i.e., the coefficients of 
    $$p(t)=(\sum_{i=1}^n a_i t^i)(\sum_{j=1}^m b_j t^j)$$ also form a trapezoidal sequence.
\end{lemma}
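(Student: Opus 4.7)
The plan is to expand each trapezoidal polynomial in the ``box'' basis $B_k^{(n)}(t) := t^k + t^{k+1} + \cdots + t^{n+1-k}$, by setting $\alpha_k := a_k - a_{k-1}$ (with $a_0 := 0$) so that $p(t) = \sum_k \alpha_k B_k^{(n)}(t)$, and analogously $q(t) = \sum_l \beta_l B_l^{(m)}(t)$. The two defining conditions of the trapezoidal property translate cleanly in this basis: condition~(1) forces $\alpha_k, \beta_l \geq 0$, while condition~(2) forces the supports of $(\alpha_k)$ and $(\beta_l)$ to be initial segments $\{1, \ldots, K_p\}$ and $\{1, \ldots, K_q\}$. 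I will work under the mild normalization $a_1, b_1 > 0$ (the setting relevant for Alexander polynomials by Theorem~\ref{Thm:strictpositivity}), so that $K_p, K_q \geq 1$.

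Multiplying the two decompositions gives $p(t) q(t) = \sum_{k, l} \alpha_k \beta_l B_k^{(n)}(t) B_l^{(m)}(t)$, where each factor is a shifted product $t^{k+l}(1+t+\cdots+t^P)(1+t+\cdots+t^Q)$ with $P = n+1-2k$ and $Q = m+1-2l$. The coefficient sequence of such a product is the classical trapezoid with plateau of length $|P-Q|+1$ at height $\min(P,Q)+1$; in particular every summand is itself symmetric and trapezoidal, and all of them share the common center $C = (n+m+2)/2$. Symmetry and unimodality of $p(t) q(t)$ then follow immediately as a non-negative combination of such pieces.

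The heart of the proof is the plateau condition. Suppose $c_r = c_{r+1}$ for some $r$ strictly before the middle index $R^*$ of the product sequence. Each successive difference $\Delta^{k,l}_{r+1}$ lies in $\{0, 1\}$ on the non-decreasing half, so the identity $\sum_{k,l} \alpha_k \beta_l \Delta^{k,l}_{r+1} = 0$ forces $\Delta^{k,l}_{r+1} = 0$ for every \emph{active} pair (those with $\alpha_k \beta_l > 0$). Since $\Delta^{k,l}_{r+1} = 1$ precisely when $r+1 = k+l$, no active pair can satisfy $k+l = r+1$, and the initial-segment structure of the supports of $\alpha$ and $\beta$ upgrades this to the sharp inequality $K_p + K_q \leq r$; hence every active pair satisfies $k+l \leq r$, and $r+1$ lies inside the plateau $[k+l+\min(P,Q)+1,\,k+l+\max(P,Q)]$ of $B_k^{(n)} B_l^{(m)}$. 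Because $k+l+\max(P,Q) \geq k+l+(P+Q)/2 = C \geq R^*$, each such plateau contains the whole sub-interval $[r+1, R^*]$, so $\Delta^{k,l}_{r'} = 0$ for every $r' \in [r+1, R^*]$ and every active $(k,l)$; summing yields $c_r = c_{r+1} = \cdots = c_{R^*}$, which is exactly the plateau condition. I expect the main technical obstacle to be the bookkeeping step that turns the combinatorial initial-segment condition on $(\alpha_k), (\beta_l)$ into the sharp inequality $K_p + K_q \leq r$; the plateau extension past $R^*$ and the symmetry/unimodality claims are essentially direct computations with the explicit shape of $B_k^{(n)} B_l^{(m)}$.
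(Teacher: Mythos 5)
Your argument is correct, and it rests on the same decomposition the paper uses: you expand each factor in the basis of concentric boxes $B_k^{(n)}(t)=t^k+\cdots+t^{n+1-k}$ with non-negative coefficients supported on an initial segment, which is exactly the paper's expansion in $\phi_p=\sum_{k=-p}^{p}t^k$ and $\psi_p=\sum_{k=-p}^{p+1}t^{k-1/2}$ after recentering. Where the two proofs part ways is the finishing step. The paper multiplies basis elements explicitly, $\phi_p\phi_q=\sum_{k=|p-q|}^{p+q}\phi_k$ (and the $\phi\psi$, $\psi\psi$ analogues), and then asserts that since each product is trapezoidal, so ``clearly'' is their sum; as a general principle about same-centered trapezoidal summands this is false (e.g.\ $\phi_3+\phi_1$ has coefficients $1,1,2,2,2,1,1$, which is unimodal but violates condition (2)), and what actually rescues the paper's computation is that the box-sizes occurring in the full expansion form a single interval reaching the maximal size, because the coefficient support is a full rectangle of pairs. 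Your plateau analysis is precisely the honest version of this point: a vanishing increment at $r+1$ kills every active piece's increment, no active pair can have $k+l=r+1$, and since the active pairs form the full rectangle $[1,K_p]\times[1,K_q]$ every sum in $[2,K_p+K_q]$ is realized, forcing $K_p+K_q\le r$ and hence that every active piece is already on its plateau, which covers $[r+1,R^*]$. So your route supplies the justification exactly where the paper is terse, at the cost of some index bookkeeping the paper avoids by working with the closed product formulas. Two minor corrections that do not affect the argument: the statement ``$\Delta^{k,l}_{r+1}=1$ precisely when $r+1=k+l$'' is not literally true (the increment equals $1$ on the whole rising range $k+l\le r+1\le k+l+\min(P,Q)$), but you only use the valid implication $r+1=k+l\Rightarrow\Delta^{k,l}_{r+1}=1$; and the normalization $a_1,b_1>0$ needs no appeal to Theorem \ref{Thm:strictpositivity}, since factoring out a power of $t$ changes neither hypothesis nor conclusion.
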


\begin{proof}
     Let $$\phi_p=\sum_{k=-p}^p t^k,\; \psi_p=\sum_{k=-p}^{p+1}t^{k-\frac{1}{2}}.$$ Observe that a sequence $(a_i)_{i\in I}$ satisfies the trapezoidal property if and only if
     $$\sum_{i\in I}a_it^i\doteq\sum_{p=g}^hc_p\phi_p\ \ \ \text{or}\ \ \ \sum_{i\in I}a_it^i\doteq\sum_{p=g}^hc_p\psi_p$$
for some $g,h\in\mathbb{N}$ and some $c_p\in\mathbb{R}_+$.

Then $(\sum_{i=1}^n a_i t^i)(\sum_{j=1}^m b_j t^j)$ is equal to (up to a power of $t$) a linear combination of $\phi_p\phi_q$'s, or $\phi_p\psi_q$'s, or $\psi_p\psi_q$'s. Each product is symmetric around degree $0$. If every product satisfies the trapezoidal property, then their sum clearly satisfies the trapezoidal property. Indeed, we can compute them directly:
$$\phi_p\phi_q=\sum_{k=|p-q|}^{p+q}\phi_k,\ \ \ \phi_p\psi_q=\sum_{k=|p-q|}^{p+q}\psi_k,\ \ \ \psi_q\psi_q=\sum_{k=|p-q|}^{p+q+1}\phi_k,$$
all of which satisfies the trapezoidal property.

\end{proof}

\begin{proof}[Proof of Theorem \ref{Thm:Trapezoidal}]

    By Proposition \ref{proposition:rectangledecomposition} and Proposition \ref{proposition:sameaveragenorm}, the sublattice representing spanning trees can be decomposed into a disjoint union of maximal rectangles with the same symmetry axis. Each maximal rectangle contributes 
    $$\Delta_{X_\alpha}(t)=\prod_{i=1}^k(\sum_{j=m_{\alpha,i}}^{M_{\alpha, i}} t^j)$$
    in the Alexander polynomial $\Delta_G(t)$. That the coefficients of $\sum_{j=m_{\alpha,i}}^{M_{\alpha, i}} t^j$ trivially satisfies the trapezoidal property implies their Cauchy product $\Delta_{X_\alpha}(t)$ also satisfies the trapezoidal property by Lemma \ref{lemma:cauchyproduct}. Since all these symmetric polynomials $\Delta_{X_\alpha}(t)$ have the same middle degree, their sum $\Delta_G(t)=\sum_\alpha \Delta_{X_\alpha}(t)$ will have unimodal coefficients.



\end{proof}

\section{Fox's trapezoidal conjecture for alternating knots}

We conclude this paper with a discussion on comparison and an explicit connection between our trapezoidal theorem on graphs and the following classical trapezoidal conjecture on alternating knots \cite{Fox}. 

\begin{conjecture}[Fox's conjecture]
Let $K$ be an alternating knot.  Then the coefficients of $\Delta_K(-t)$ form a unimodal sequence. 
\end{conjecture}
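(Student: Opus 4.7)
The plan is to try to reduce Fox's conjecture to Theorem \ref{Thm:Trapezoidal} by realizing $\Delta_K(-t)$, up to a power of $t$ and an overall sign, as the Alexander polynomial $\Delta_{G_K}(t)$ of a plane MOY graph $G_K$ naturally associated to an alternating diagram of $K$. The canonical candidate for $G_K$ is one of the two Tait (checkerboard) graphs of a reduced alternating diagram: place a vertex in each white region and connect two vertices by an edge through each crossing that separates their regions. Since alternating diagrams are planar, $G_K$ is automatically planar.

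The bridge between $\Delta_K(t)$ and $\Delta_{G_K}(t)$ is Crowell's classical spanning tree formula. Crowell expresses $\Delta_K(t)$ as a signed Laurent polynomial $\sum_T \varepsilon_T t^{\mu(T)}$ indexed by spanning trees $T$ of $G_K$, where $\mu(T)$ is an activity-type statistic read off from the crossings, and $\varepsilon_T\in\{\pm1\}$. The key classical observation, special to the alternating case, is that the signs $\varepsilon_T$ become uniformly positive after the substitution $t\mapsto -t$, so that $\pm\Delta_K(-t) \doteq \sum_T t^{\mu(T)}$. The first main step in the program is therefore to match Crowell's activity $\mu(T)$ with the norm $|\vec x|$ of the lattice point representing $T$ in our Theorem \ref{Thm:spanningtreemodel}, once $G_K$ has been equipped with an appropriate transverse orientation and balanced coloring. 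If a natural choice of coloring and transverse orientation on $G_K$ can be arranged so that Crowell's and our formulas agree term-by-term, one obtains $\Delta_K(-t) \doteq \Delta_{G_K}(t)$ and Theorem \ref{Thm:Trapezoidal} immediately yields Fox's conjecture.

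The main obstacle, and the reason this program does not yet close, lies precisely in the compatibility of the MOY framework with the Tait graph. A Tait graph does not come equipped with an orientation, and typically no assignment of directions to its edges satisfies the balanced in-degree/out-degree condition of Definition \ref{Def:MOY} with trivial coloring. A promising workaround is to replace $G_K$ by a derived plane graph, for instance by applying parallel edge replacements (Theorem \ref{Thm:parallelinvariant}) in reverse or by permitting higher MOY colorings on selected edges to balance the flow. Any such construction must simultaneously (i) remain plane, (ii) satisfy the transverse balanced condition, and (iii) reproduce Crowell's activity as the norm function; the tension between (ii) and (iii) is where the difficulty concentrates.

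A more modest goal, and a natural next step, is to carry out the above identification explicitly for the subfamilies of alternating knots where Fox's conjecture is already known or accessible: two-bridge knots, alternating algebraic knots, and alternating knots of small genus. Success there would provide concrete evidence that the correct MOY-theoretic encoding of an alternating diagram has been found, and would make it plausible that the clock-theoretic decomposition $X = \sqcup_\alpha X_\alpha$ into maximal rectangles sharing a common symmetry axis, developed in Section 5, ultimately governs the trapezoidal pattern in $\Delta_K(-t)$ as well.
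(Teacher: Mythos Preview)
The paper does not prove Fox's conjecture; it is stated there as an open conjecture, and Section~6 explicitly says that Theorem~\ref{Thm:Trapezoidal} does not apply to the Crowell graph $\mathcal{G}(K)$, ``thus leaving Fox's conjecture unresolved.'' So there is no ``paper's own proof'' to compare against.

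Your proposal is not a proof either, and you say so honestly: it is a program, and it is essentially the same program the paper itself sketches at the end of Section~6. The paper poses exactly your reduction as an open \emph{Question}: find, for each alternating $K$, a plane MOY graph $G$ with $\Delta_K(-t)\doteq\Delta_G(t)$; an affirmative answer would settle Fox's conjecture via Theorem~\ref{Thm:Trapezoidal}. The paper reports partial progress on subfamilies (certain positive/negative and algebraic alternating knots), which is the same ``modest goal'' you propose as a next step.

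One point worth sharpening: you take the Tait checkerboard graph as the canonical candidate $G_K$ and then invoke Crowell's spanning tree formula on it, but the version of Crowell's formula stated in the paper runs over spanning trees of $\mathcal{G}(K)$, whose vertices are the \emph{crossings}, not the regions. The paper's first candidate for $G$ is the planar singular knot $\mathcal{K}$ (same underlying $4$-regular graph as $\mathcal{G}(K)$ but with the transverse orientation), and it notes that Tait-graph constructions are needed only in harder cases such as the trefoil. Your obstacle analysis is correct in spirit---the failure is precisely that neither $\mathcal{G}(K)$ nor the Tait graph is naturally a balanced transverse graph---but be careful to keep the two candidate graphs straight, since the spanning-tree statistics you need to match live on different objects.
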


In \cite{Cro59}, Crowell exhibits a spanning tree model for the Alexander polynomial of alternating knots as follows: starting from the alternating diagram $K$, one can define an oriented plane graph $\mathcal{G}(K)$ by letting
\begin{enumerate}
    \item[(i)] The vertices of $\mathcal{G}(K)$ are the crossings of $K$.  
    \item[(ii)] The edges of $\mathcal{G}(K)$ are the arcs between the crossings, and are oriented from the overcrossings to the undercrossings.  
\end{enumerate}

\begin{figure}[!h]
    \centering
    \includegraphics[width=8cm]{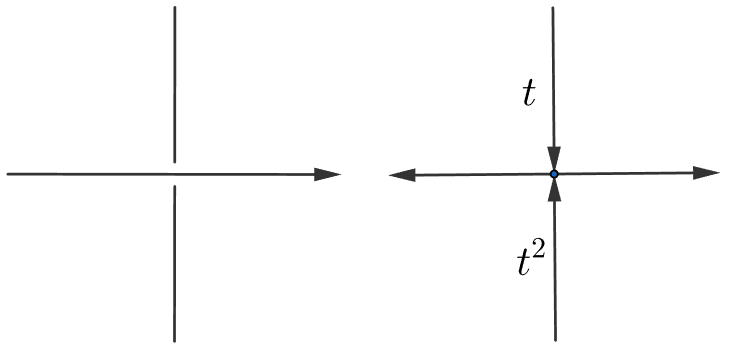}
    \caption{Left: An arbitrary orientation of $K$. Right: The two over-crossing (resp. under-crossing) arcs at each crossing correspond to outgoing (resp. outgoing) edges. Furthermore, we label the edges with weights $t^2$ and $t$ such that, as one traverses the over-strand at crossing, the weight $t^2$ (resp. $t$) appears on the right (resp. left).}
    \label{fig:crowell}
\end{figure}

\begin{theorem} [Crowell] Assign a weight on each edge of the induced graph $\mathcal{G}(K)$ by the rule in Figure \ref{fig:crowell}. If we fix an arbitrary root $r\in \mathcal{G}(K)$ and let $\mathcal{T}_r(\mathcal{G}(K))$ denote the set of rooted spanning trees, then $$\Delta_K(-t)\doteq \sum _{T\in \mathcal{T}_r(\mathcal{G}(K))} \prod_{e\in T} w(e).   $$

\end{theorem}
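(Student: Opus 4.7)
The plan is to prove Crowell's formula through the classical Alexander matrix from the Wirtinger presentation, combined with Tutte's weighted matrix-tree theorem for directed graphs. First, I would write the Wirtinger presentation $\pi_1(S^3 \setminus K) = \langle x_1,\dots,x_n \mid r_1,\dots,r_n\rangle$, with one meridian generator $x_i$ per arc and one relator $r_j$ per crossing of the form $r_j:\, x_l x_i x_k^{-1} x_i^{-1} = 1$ (with the obvious modification for a negative crossing). Applying Fox's free differential calculus and abelianizing $x_m \mapsto t$ produces an $n \times n$ Alexander matrix $A(t)$, whose reduced determinant $\det(A^{(r)}(t))$ after deleting one row and one column equals $\Delta_K(t)$ up to a unit. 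Each row of $A(t)$ has three nonzero entries, of the form $(1-t,\, t,\, -1)$ up to sign and a power of $t$, in the columns of the three arcs at the crossing.

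Next, I would exploit the distinctive feature of reduced alternating diagrams: each Wirtinger arc passes over exactly one crossing, giving a canonical bijection $\beta$ between the set of arcs and the set of crossings. Re-indexing the columns of $A(t)$ by crossings through $\beta$ turns $A(t)$ into an $n \times n$ matrix indexed by crossings on both sides. Substituting $t \mapsto -t$ and normalizing each row by a suitable unit would bring the matrix into the form of the weighted ``out-Laplacian'' of $\mathcal{G}(K)$ under the weight convention in Figure \ref{fig:crowell}: the diagonal entry at $c_m$ would sum the weights $t^2 + t$ of the two outgoing half-arcs at $c_m$, while the off-diagonal entry at $(c_j, c_m)$ would be the negative weight of the directed edge $c_m \to c_j$, which exists precisely when the over-arc at $c_m$ terminates at $c_j$. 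Tutte's matrix-tree theorem then yields
\[ \det(L^{(r)}) = \sum_{T \in \mathcal{T}_r(\mathcal{G}(K))} \prod_{e \in T} w(e), \]
which, combined with the identification above, proves $\Delta_K(-t) \doteq \sum_T \prod_{e \in T} w(e)$.

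The main obstacle is the identification step: verifying that the ``right gets $t^2$, left gets $t$'' assignment matches the Alexander-matrix entries consistently across \emph{both} positive and negative crossings, and checking that the row normalizations can be chosen so the global signs align. The alternating hypothesis is essential here --- without it the bijection $\beta$ breaks (an over-arc can span several crossings) and the reduced Alexander matrix no longer acquires a Laplacian shape. A careful case analysis at each crossing type, perhaps after first orienting $K$ so that the checkerboard coloring makes all crossings uniformly signed, should complete the proof. As an alternative path, one could bypass the Alexander matrix entirely and work directly with Kauffman's state sum for $\Delta_K(t)$, constructing a bijection between Kauffman states of the alternating diagram and spanning trees of $\mathcal{G}(K)$ in the spirit of the map $\phi$ from Section 4 of the present paper.
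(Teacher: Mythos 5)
The paper itself offers no proof of this statement: it is quoted as Crowell's theorem with a citation to \cite{Cro59}, so the only comparison available is with Crowell's original argument --- and your proposal is, in substance, exactly that argument (Wirtinger presentation, Fox calculus to produce the Alexander matrix, the arc--crossing bijection that is special to alternating diagrams since each arc overpasses exactly one crossing, and a directed matrix--tree theorem to turn the reduced determinant into a weighted count of rooted spanning trees of $\mathcal{G}(K)$). The route is viable, and the normalization of the weights ($t^2$ and $t$ rather than $t$ and $1$) only changes the answer by a global power of $t$, which the relation $\doteq$ absorbs.

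Two concrete points need repair before this is a proof. First, your matrix--tree step uses the wrong Laplacian for the trees being counted: in this paper $\mathcal{T}_r(\mathcal{G}(K))$ consists of arborescences in which every non-root vertex has \emph{in}-degree $1$ (edges directed away from $r$), and Tutte's theorem counts those by the $r$-minor of the Laplacian whose diagonal records weighted \emph{in}-degrees; the matrix you describe (diagonal $=$ sum of outgoing weights $t^2+t$, off-diagonal $-w(c_m\to c_j)$) has its $r$-minor counting arborescences with \emph{out}-degree $1$ at non-root vertices, which is a different set of trees. You must either switch to the in-degree Laplacian, or add an argument (e.g.\ via the balanced structure of $\mathcal{G}(K)$, where every vertex has in- and out-degree $2$, together with the symmetry $\Delta_K(t)\doteq\Delta_K(t^{-1})$) that the two weighted counts agree up to a power of $t$; with weights this is not automatic. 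Second, the crossing-by-crossing verification that the Fox-derivative rows, after $t\mapsto -t$ and unit normalization of each row, reproduce exactly the ``right gets $t^2$, left gets $t$'' rule is where the entire content of the theorem sits; it is a finite check, but it is the proof, and your suggested shortcut of orienting $K$ so that ``all crossings are uniformly signed'' is not available for a general alternating knot --- both crossing signs must be treated. With these two items carried out, your argument is sound and coincides with Crowell's classical proof rather than with anything done in this paper.
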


In order to gain further insight, it would be helpful to make an explicit comparison with our spanning tree model in Theorem \ref{Thm:spanningtreemodel}. Upon making the identification between $X$ and $\mathcal{T}_r(G)$ via Definition \ref{Def:lattice} and assigning the weight $w(e_{i, x_i})=t^{x_i}$ on the $x_i$-th edge entering the vertex $v_i$, we may rewrite Formula (\ref{eq:spanning}) as 
\begin{equation}
\Delta_G(t)\doteq \sum _{T\in \mathcal{T}_r(G)}\prod_{e\in T} w(e),
\end{equation} 
which exhibits a strong formal similarity to the above formula by Crowell.  However, it is important to note that $\mathcal{G}(K)$ is not an MOY graph, as none of the orientation is transverse at any of its vertices. Accordingly, Theorem \ref{Thm:Trapezoidal} does not apply to $\mathcal{G}(K)$, thus leaving Fox' conjecture unresolved. 

\begin{example}\label{exp:trefoil}
Let $K_1$ be the trefoil knot, and $\mathcal{G}(K_1)$ the induced Crowell graph. If we compare $\mathcal{G}(K_1)$ with the planar singular trefoil $\mathcal{K}_1$ side-by-side (Figure \ref{fig:trefoil}), we find that they have the same vertex and edge sets but different orientations. 

\begin{figure}[!h]
    \centering
    \includegraphics[width=16cm]{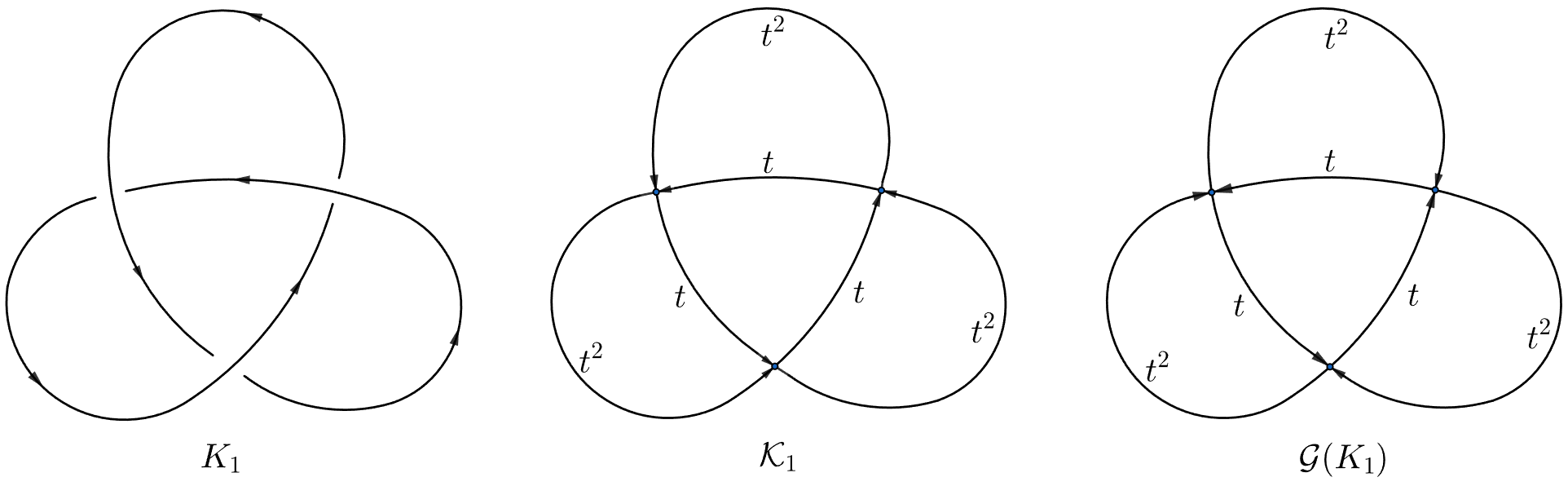}
    \caption{Left: An oriented trefoil knot $K_1$. Middle: An oriented singular trefoil knot $\mathcal{K}_1$. Right: The Crowell graph $\mathcal{G}(K_1)$.}
    \label{fig:trefoil}
\end{figure}

When we fix one of the vertices, say $v_1$, as the root $r$ and compare their rooted spanning trees, it is interesting to note that $|\mathcal{T}_r(\mathcal{G}(K_1))|=3$ and $|\mathcal{T}_r(\mathcal{K}_1)|=4$.  See Figure \ref{fig:spanningtree1} and \ref{fig:spanningtree2}, respectively.  This also gives rise to distinct Alexander polynomials, namely, $$\Delta_{K_1}(-t)\doteq 1+t+t^2;\, \Delta_{\mathcal{K}_1}(t)\doteq 1+2t+t^2.$$

\begin{figure}[!h]
    \centering
    \includegraphics[width=12cm]{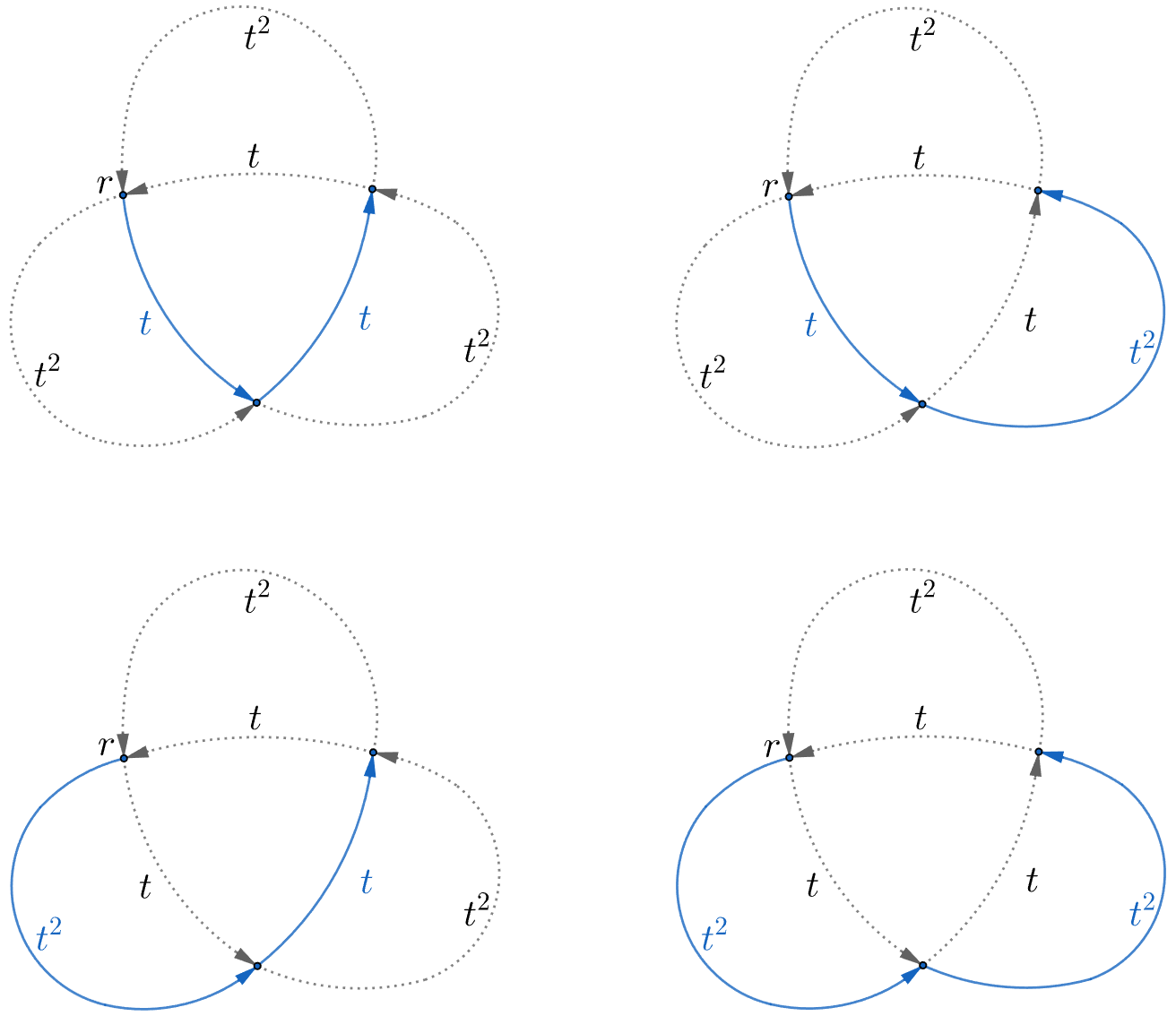}
    \caption{The four spanning trees of $\mathcal{K}_1$. They respectively contribute $t^2$, $t^3$, $t^3$ and $t^4$. Thus $\Delta_{\mathcal{K}_1}(t)\doteq t^2+2t^3+t^4\doteq 1+2t+t^2$.}
    \label{fig:spanningtree1}
\end{figure}

\begin{figure}[!h]
    \centering
    \includegraphics[width=15cm]{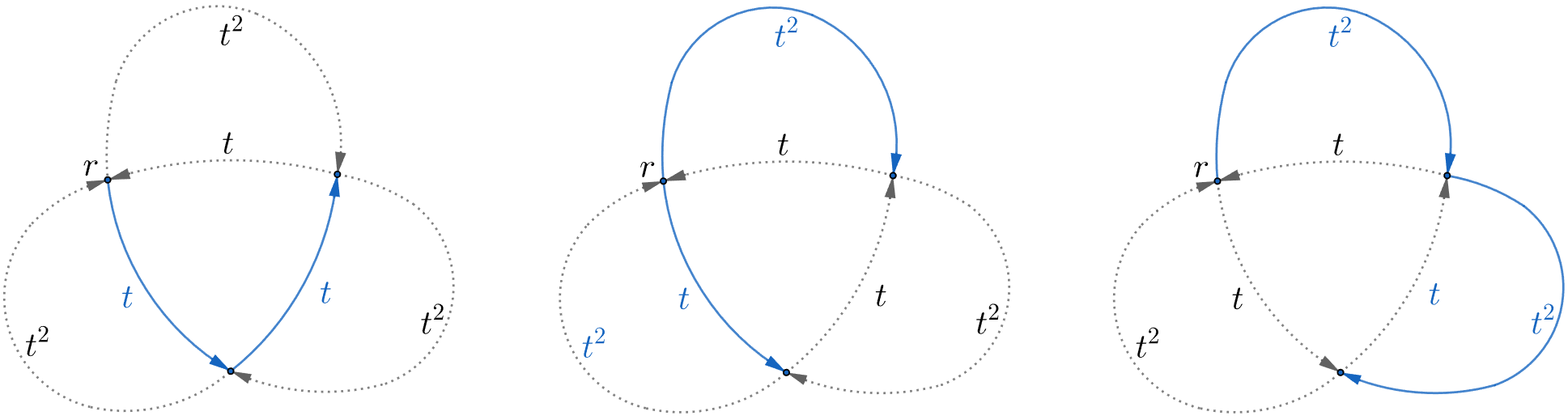}
    \caption{The three spanning trees of $\mathcal{G}(K_1)$. They respectively contribute $t^2$, $t^3$ and $t^4$. Thus $\Delta_{K_1}(-t)\doteq t^2+t^3+t^4\doteq 1+t+t^2$.}
    \label{fig:spanningtree2}
\end{figure}

\end{example}

\begin{example}\label{exp:figure8}

Let $K_2$ be the figure-eight knot and $\mathcal{G}(K_2)$ the induced Crowell graph. In this particular case, we have the inspiring discovery that $\mathcal{G}(K_2)$ and $\mathcal{K}_2$ are, in fact, isomorphic even when taking into account the weights assigned to the edges (Figure \ref{fig:figureeight}).  While their plane embeddings are different, this does not affect the isomorphism between their rooted spanning trees $\mathcal{T}_r(\mathcal{G}(K_2))$ and $\mathcal{T}_r(\mathcal{K}_2)$.  This explains the identity 
$$\Delta_{K_2}(-t)=\Delta_{\mathcal{K}_2}(t)\doteq 1+3t+t^2.$$

\begin{figure}[!h]
    \centering
    \includegraphics[width=15cm]{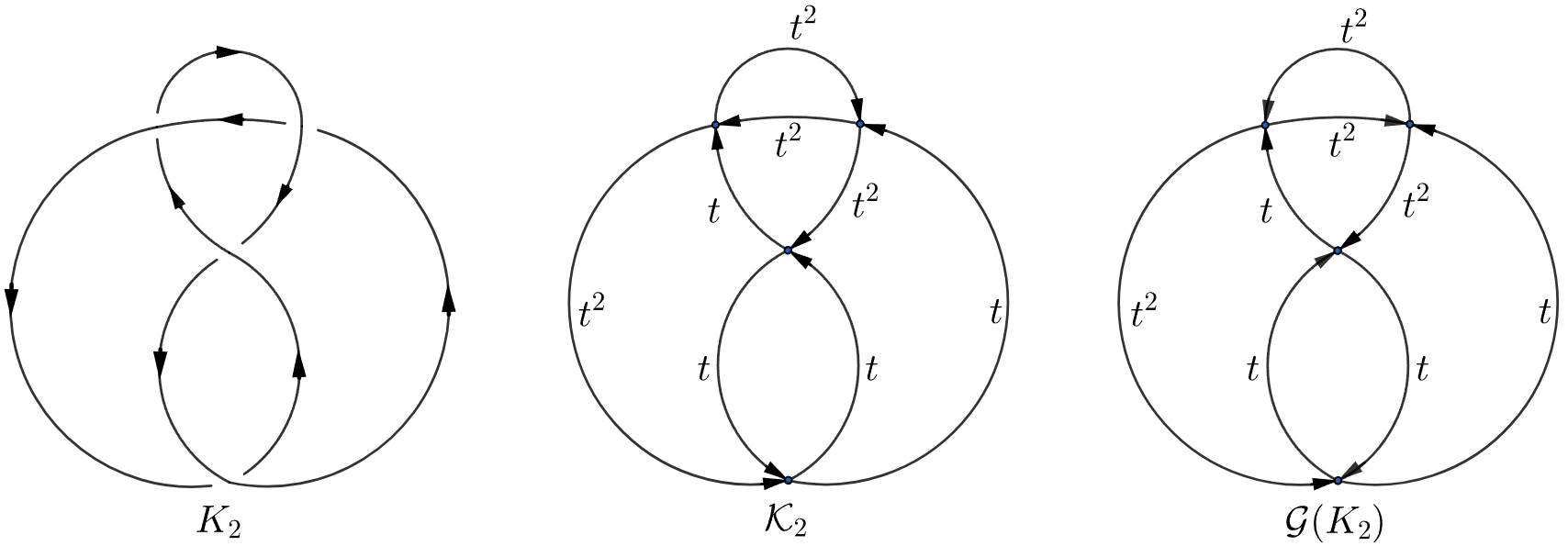}
    \caption{Left: An oriented figure-eight knot $K_2$. Middle: An oriented singular figure-eight knot $\mathcal{K}_2$. Right: The Crowell graph $\mathcal{G}(K_2)$.}
    \label{fig:figureeight}
\end{figure}

\end{example}

In light of the above examples, we pose the following question:

\bigskip

\noindent
\textbf{\textit{Question}}: Given an alternating knot $K$, is there a natural construction of a plane MOY graph $G$ from $K$ such that
$\Delta_K(-t)\doteq\Delta_G(t)$?

\bigskip


An affirmative answer to this question would resolve Fox's trapezoidal conjecture.  Our preliminary investigations have yielded promising results for several classes of alternating knots, including specific subclasses of positive/negative knots and algebraic knots. While $G$ can be constructed directly as the corresponding planar singular knot $\mathcal{K}$ in some favorable cases, such as the figure-eight knot $K_2$, more sophisticated constructions are often needed for knots like the trefoil $K_1$, which involve taking {\it Tait graphs} of the original knot.  This work is still in progress, and we will not elaborate further here.

\end{document}